\newcommand{\R}{{\mathbb R}}
\newcommand{\N}{{\mathbb N}}
\newcommand{\e }{\varepsilon}
\renewcommand{\ge }{\geqslant}
\renewcommand{\geq }{\geqslant}
\renewcommand{\leq }{\leqslant}
\def\neweq#1{\begin{equation}\label{#1}}
\def\endeq{\end{equation}}
\def\eq#1{(\ref{#1})}
\newtheorem{theorem}{Theorem}[section]
\newtheorem{proposition}[theorem]{Proposition}
\newtheorem{lemma}[theorem]{Lemma}
\newtheorem{remark}[theorem]{Remark}
\newtheorem{definition}[theorem]{Definition}
\theoremstyle{definition}
\newcommand{\cotan}{\textnormal{cotan}}
\newcommand{\cotanh}{\textnormal{cotanh}}
\begin{document}

\title[Nonlinear non homogeneous multi span beam]{On the stability of a nonlinear \\ non homogeneous multiply hinged beam}
\author[Elvise BERCHIO]{Elvise BERCHIO}
\address{\hbox{\parbox{5.7in}{\medskip\noindent{Dipartimento di Scienze Matematiche, \\
Politecnico di Torino,\\ Corso Duca degli Abruzzi 24, 10129 Torino, Italy. \\[3pt]
\em{E-mail address: }{\tt elvise.berchio@polito.it}}}}}
\author[Alessio FALOCCHI]{Alessio FALOCCHI}
\address{\hbox{\parbox{5.7in}{\medskip\noindent{Dipartimento di Scienze Matematiche, \\
Politecnico di Torino,\\ Corso Duca degli Abruzzi 24, 10129 Torino, Italy. \\[3pt]
\em{E-mail address: }{\tt alessio.falocchi@polito.it}}}}}
\author[Maurizio GARRIONE]{Maurizio GARRIONE}
\address{\hbox{\parbox{5.7in}{\medskip\noindent{Dipartimento di Matematica, \\
Politecnico di Milano,\\ Piazza Leonardo da Vinci, 32 - 20133 Milano, Italy. \\[3pt]
\em{E-mail address: }{\tt maurizio.garrione@polimi.it}}}}}

\date{\today}

\keywords{nonhomogeneous beam; multiply hinged conditions; optimization; stability}

\subjclass[2010]{34D20, 74K10, 34L15, 35B35}

\begin{abstract}
The paper deals with a nonlinear evolution equation describing the dynamics of a non homogeneous multiply hinged beam, subject to a nonlocal restoring force of displacement type. First, a spectral analysis for the associated weighted stationary problem is performed, providing a complete system of eigenfunctions. Then,  
%\begin{equation}\label{ab1}
%p(x)\,u_{tt}+u_{xxxx}+\|u\|_{L_p^2(I)}^2 \,p(x)\, u=0, \quad u=u(x, t), \, x \in [-\pi, \pi], \, t > 0,
%\end{equation}
a linear stability analysis for bi-modal solutions of the evolution problem is carried out, with the final goal of suggesting optimal choices of the density and of the position of the internal hinged points in order to improve the stability of the beam. The analysis exploits both analytical and numerical methods; the main conclusion of the investigation is that \emph{non homogeneous density functions improve the stability of the structure.}
\end{abstract}

\maketitle

\section{Introduction}
We consider a nonlinear evolution problem for a multiply hinged symmetric beam made of non homogeneous material, arising when dealing with certain simplified suspension bridge models recently proposed in \cite{GarGazBK}, see also \cite{babefega} for the simply hinged case. The corresponding stationary equation reads as a multi-point problem for a fourth-order ODE (see, e.g.,  \cite{gazgar} and references therein). As far as we are aware, no evolution problem has been studied in this setting besides the ones treated in \cite{GarGazBK}, for a constant density beam. The main goal of this paper is to suggest both density distributions and positions of the hinged points (i.e., of the \emph{piers}) in order to maximize the stability of the beam in a proper sense: intuitively, a beam is as stabler as higher are the energies associated to those pure oscillations for which a relevant energy transfer, towards another mode of oscillation, takes place, see Section \ref{critical energies}. Therefore, we first develop the spectral analysis of the related weighted eigenvalue problem and then we take advantage of the obtained information to formalize this intuitive idea by performing a linear stability analysis. \smallbreak
More precisely, we deal with the nonlinear equation 
\begin{equation}\label{nonlineareintro} 
p(x)\,u_{tt}+u_{xxxx}+\|u\|_{L_p^2(I)}^2 \,p(x)\, u=0,
\end{equation}
where $t > 0$ is the time variable, $x \in \overline{I}=[-\pi, \pi]$ represents the position along the beam and $p=p(x)$ is the non constant density function. The choice of the nonlinear term $\|u\|_{L_p^2(I)}^2:=\int_I pu^2 \,  dx$ 
is extensively explained in \cite[Chapter 3]{GarGazBK}, where for $p \equiv 1$ it is shown that a nonlinearity of this kind is the most suitable in order to describe the energy transfers between modes in models for real structures. Equation \eqref{nonlineareintro} is complemented with suitable initial data and hinged boundary conditions (see Section \ref{setting}). After providing a complete system of eigenfunctions for the related weighted eigenvalue problem (Proposition \ref{general facts}), we investigate the energy transfer between modes by proceeding with an analysis of \emph{bi-modal solutions} of \eqref{nonlineareintro}; such solutions have only two active Fourier components, one embodying the \emph{prevailing mode} - namely an initially ``large'' oscillation - and the other one representing the \emph{residual mode}, i.e., an oscillation which is tiny at the beginning and acquires energy as time evolves. In fact, the abrupt energy exchange phenomenon is a behaviour that can be observed in real structures (as suspension bridges) and turns to be dangerous for their safety; for more details, we refer to \cite {bookgaz} and to \cite[Chapter 4]{GarGazBK}, where the results obtained for homogeneous beams are shown to be in line with those obtained for more realistic models having two degrees of freedom (as the so-called ``fish-bone'' model proposed in \cite{elvisefilippo}). We also point out that the reduction to a finite number of modes is usual in classical engineering literature, since the energy concentrated on high modes is unrealistically large for tiny oscillations. With these preliminaries, the stability analysis is carried out by linearizing the $2\times 2$ system of ODEs describing the time evolution of the coefficients of the active Fourier components around the solution for which the coefficient of the residual mode is identically equal to $0$. This leads to analyze the stability regions for a suitable Hill equation (see eq. \eqref{hill}) obtaining, in dependence on the density and on the positions of the piers, an energy threshold of instability beyond which linear instability appears. The explicit expression of such a threshold (see formula \eqref{ecritica}) depends increasingly on the ratio between the eigenvalues corresponding to the active modes; we thus choose to restrict our attention to the set of densities determined by solving a maximization problem for the ratio of the eigenvalues that we relate to the lowest energy threshold of instability. A theoretical explanation of the importance of having large ratios of eigenvalues to gain stability in Hamiltonian systems was already given in \cite{bgz}, within the classical stability theory of Mathieu equations; see also \cite{bebuga2} and \cite{befa} for the same issue when dealing with plate models. \par
In the final part of our study, we compute numerically the above mentioned energy thresholds for beams having different densities and different positions of the hinged points; the collected data allow us to compare the performances of such beams and to formulate precise suggestions on how to improve their stability (Sections \ref{2step} and \ref{num}). As the main conclusion of our analysis, we infer that 
\begin{center}
\textbf{a non homogeneous beam is stabler than a homogeneous one}, 
\end{center}
as we observe in Section \ref{sumcon}. 
\par
The paper is organized as follows. In Section \ref{setting}, we frame the nonlinear evolution model and the associated stationary eigenvalue problem in a proper variational setting, establishing a well-posedness result for the former and a spectral theorem, together with a regularity result for the eigenfunctions, for the latter. Section \ref{stab} explains in details the linear stability analysis which will be performed, explicitly determining the critical energy threshold for bi-modal solutions; since such an expression depends increasingly on the ratio of the involved eigenvalues, we then show that the maximization problem for the ratio of two eigenvalues, with respect to the density, always has a solution, given by a suitable \emph{bang-bang density}, i.e., a stepwise function. In Section \ref{2step}, we take the simplest example of such a kind of density, a two-step one, determining more explicitly the eigenvalues of the corresponding stationary eigenvalue problem and then providing a first numerical study about the optimal position of the piers at fixed density. Section \ref{num} is then dedicated to more general numerical experiments taking into account the possibility of having bang-bang densities with an arbitrary number of jumps; we draw therein our conclusions about the optimal combination ``position of the piers-density'' in terms of stability (Section \ref{sumcon}). The final section of the article (Section \ref{dimostrazioni}) collects all the proofs of the statements given along the paper.

\section{Setting of the problem }\label{setting}
 \subsection{The nonlinear evolution model}
 %\label{nonlin}
 We consider a model for a multiply hinged beam divided in three adjacent spans (segments): the main (middle) span and two side spans separated by piers. Without loss of generality, we normalize the total length to $2\pi$ and we denote by $I:=(-\pi,\pi)$ the segment corresponding to the beam. We assume that the beam is hinged at the extremal points $\pm \pi$ and in correspondence of two further symmetric points $\pm a \pi$, where $a \in (0, 1)$ is a real parameter determining the relative measure of the side spans with respect to the main span (compare with \cite{gazgar}). Furthermore, we assume that the beam is non homogeneous and we denote its density function by $p=p(x)$. More precisely, for $0<\alpha<1<\beta$ given, we deal with the following class of densities:
	\begin{equation} \label{eq:famiglia}
P_{\alpha, \beta}:=\left\{p\in L^\infty(I):\, \int_I p\,dx=2\pi,\,\,\alpha\leq p\leq\beta\,\text{ and } p(x)=p(-x) \text{ a.e. in } I \, \, \right\} \, .
\end{equation} 
The integral condition in \eqref{eq:famiglia} represents the preservation of the total mass, which is useful in order to compare the results for different weights. The numbers  $\alpha$ and $\beta$ represent the ``limit values'' for the density we want to employ to build the beam, while the symmetry requirement on $p$ means that we focus on designs which are symmetric with respect to the middle of the beam. 
% Notice that the assumption $\alpha<1<\beta$ is not restrictive; if we assume $\beta= 1$, it must be $p = 1$ a.e. in $(-\pi,\pi)$, since otherwise we would have $\int_I p\,dx<2\pi$; similarly, if we consider $\alpha=1$.\par

 In the following, we denote by $\|\cdot\|_q$ the norm in the Lebesgue space $L^q(I)$ ($1\leq q\leq\infty$) and with $(\cdot,\cdot)$ the scalar product of $L^2(I)$. Given $p\in P_{\alpha, \beta}$, we also endow the space $L^2$ with the weighted scalar product $(u,v)_{p}:=(p\, u,v)$, for all $u,v\in L^2(I)$. Let us emphasize that the corresponding norm $\|u\|_{L_p^2(I)}^2=(u,u)_{p}$ is equivalent to the $L^2$-norm but, for the sake of clarity, we maintain the different notations.  Finally, by assuming that the Young modulus of the beam is constant, the total energy of the beam reads:
$$
\mathfrak{E}(u)=\frac{1}{2}\, \int_I p(x)\,u_t^2\,dx+\frac{1}{2}\, \int_I \, u_{xx}^2\,dx+\frac{1}{4}\, \left(\int_I p(x)\,u^2\,dx\right)^2\,,$$
see e.g. \cite[Example VI]{levine}. Namely, we add to the kinetic and to the bending energy a nonlocal term which models a beam whose displacement behaves superquadratically and nonlocally; in other words, when the beam is displaced from its equilibrium position in some point, we assume that this increases, in a way proportional to the density, the resistance to further displacements in all the other points. Then, by proceeding formally, the resulting evolution problem is
 \begin{equation}
\begin{cases}
p(x)\,u_{tt}+u_{xxxx}+\|u\|_{L_p^2(I)}^2 \,p(x)\, u=0& \quad x\in I\,,\, t>0\\
u(x,0)=g(x)\,,\,\,u_t(x,0)=h(x)&  \quad x\in I\,,\\
u(\pm\pi,t)=u''(\pm \pi,t)=0 & \quad  t>0\,,\\
u(\pm a\pi,t)=0&  \quad  t>0\,.
\end{cases}
\label{beamEV}
\end{equation}
 We refer to \cite{GarGazBK} for the treatment of problem \eq{beamEV} when $p\equiv1$ and for an exhaustive explanation of the fact that the linear stability analysis for the nonlinear model \eqref{beamEV} allows to recover several crucial features of the behaviour of real structures such as bridges. This motivates the applicative interest of the study we are going to perform.

Problem \eq{beamEV} is written in strong form but, in general, there is not enough regularity for this formulation. To make precise what we mean by solutions of \eq{beamEV}, we introduce the space 
$$V(I):=\{u\in H^2\cap H^1_0(I): u(\pm a\pi)=0  \}\,.$$
Notice that $V(I)$ embeds into $C^1(\overline I)$, therefore the conditions $u(\pm\pi)=u(\pm a\pi)=0$ can be meant pointwise; moreover, $V(I)$ is a Hilbert space when endowed with the scalar product of $H^2\cap H^1_0(I)$
 $$
 (u,v)_{V}:=(u'',v'') \quad (\text{with associated norm }
  \|u\|_{V}^2=(u,u)_{V}).
$$ 
We denote by $V'(I)$ the dual space of $V(I)$ and with $\langle \cdot,\cdot \rangle_{V}$ the corresponding duality pairing. With these preliminaries, we can formalize the concept of weak solution of \eqref{beamEV}. 
\begin{definition}\label{weaks}
 Let $T>0$, $g\in V(I)$ and $h\in L^2(I)$. We say that $u\in C^0([0,T]; V(I))\cap C^1([0,T]; L_p^2(I))$ such that
 $$\|u(x,t)-g(x)\|_{V(I)}\rightarrow 0 \quad \text{and} \quad \|u_t(x,t)-h(x)\|_{L^2_p(I)}\rightarrow 0 \text{ as } t\rightarrow 0^+\,$$
  is a weak solution to \eq{beamEV} if it satisfies the equation 
\begin{equation}\label{beam_weak}
 \int_0^T (u_t(x,s),\varphi(x))_p\, \chi'(s) \, ds=  \int_0^T \bigg\{ (u_{xx}(x,s),\varphi_{xx}(x)) + \|u(s)\|_{L_p^2(I)}^2\,(u(x,s),\varphi(x))_p\, \bigg\}\, \chi(s) ds
 \end{equation}
for all $\varphi \in V(I)$ and all $\chi \in \mathcal{D}(0, T)$. 
 \end{definition}

 \begin{remark}\label{regw}
\textnormal{It is readily seen from \eq{beam_weak} that $pu_t$ admits distributional derivative in the $t$ variable and that $pu\in C^2([0,T]; V'(I))$, hence \eq{beam_weak} may be rewritten as}
 \begin{equation*}
 \langle p(x)u_{tt},\varphi(x)\rangle_{V}+ (u_{xx},\varphi_{xx}(x))+( \|u\|_{L_p^2(I)}^2\,u,\varphi(x))_p=0 \quad \forall \varphi\, \in  V(I)\,, \forall\, t>0\,.
 \end{equation*}
 \end{remark}
  
Existence and uniqueness for weak solutions of \eq{beamEV} is given in the following result.
 \begin{theorem}\label{Galerkin}
 Let $T>0$ (including $T=\infty$), $g\in V(I)$ and $h\in L^2(I)$. Problem \eq{beamEV} admits a unique (weak) solution $u\in C^0([0,T]; V(I))\cap C^1([0,T]; L_p^2(I))$.
Furthermore, $pu\in C^2([0,T]; V'(I))$ and $u$ satisfies
  \begin{equation}\label{energy}
\frac{1}{2}\, \|u_t(t)\|_{L_p^2(I)}^2\,+\frac{1}{2}\,\|u_{xx}(t)\|_{L^2(I)}^2+\frac{1}{4}\,\|u(t)\|_{L_p^2(I)}^4=\frac{1}{2}\, \|h\|_{L_p^2(I)}^2\,+\frac{1}{2}\,\|g_{xx}\|_{L^2(I)}^2+\frac{1}{4}\,\|g\|_{L_p^2(I)}^4\,,
 \end{equation}
 for all $t>0$.
\end{theorem}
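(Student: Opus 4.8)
The plan is to construct the solution via a Galerkin scheme based on the complete system of eigenfunctions $\{e_k\}_{k\ge1}$ supplied by Proposition \ref{general facts}, which is orthonormal in $L^2_p(I)$ and orthogonal in $V(I)$. I would seek approximate solutions $u_n(t)=\sum_{k=1}^n\varphi_k(t)\,e_k$, where the coefficients are determined by projecting the weak formulation \eqref{beam_weak} onto $\mathrm{span}\{e_1,\dots,e_n\}$: testing with each $e_k$ and using the orthogonality relations turns \eqref{beam_weak} into a system of $n$ second-order ODEs for the $\varphi_k$, coupled only through the scalar nonlocal factor $\|u_n\|_{L^2_p(I)}^2=\sum_{j=1}^n\varphi_j^2$. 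Since the right-hand side is polynomial (hence locally Lipschitz) in the $\varphi_j$, the Cauchy--Lipschitz theorem gives a unique local-in-time solution, with initial data obtained by projecting $g$ and $h$ onto the first $n$ eigenfunctions.

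Next I would establish the fundamental a priori bound by testing the projected equation with $(u_n)_t$ (equivalently, multiplying the $k$-th ODE by $\varphi_k'$ and summing). Because the nonlinearity is of gradient type, this yields the \emph{exact} conservation of the approximate energy
\begin{equation*}
\tfrac12\|(u_n)_t(t)\|_{L^2_p(I)}^2+\tfrac12\|(u_n)_{xx}(t)\|_{L^2(I)}^2+\tfrac14\|u_n(t)\|_{L^2_p(I)}^4=\tfrac12\|(u_n)_t(0)\|_{L^2_p(I)}^2+\tfrac12\|(u_n)_{xx}(0)\|_{L^2(I)}^2+\tfrac14\|u_n(0)\|_{L^2_p(I)}^4,
\end{equation*}
whose right-hand side is bounded uniformly in $n$, since the projections of $g,h$ converge in $V(I)$ and $L^2(I)$ respectively. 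This rules out finite-time blow-up of the ODE system, giving global existence for every $T$ (including $T=\infty$), and provides uniform bounds for $u_n$ in $L^\infty(0,T;V(I))$ and for $(u_n)_t$ in $L^\infty(0,T;L^2_p(I))$.

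I would then pass to the limit. Up to a subsequence, the uniform bounds yield $u_n\overset{*}{\rightharpoonup}u$ in $L^\infty(0,T;V(I))$ and $(u_n)_t\overset{*}{\rightharpoonup}u_t$ in $L^\infty(0,T;L^2_p(I))$; combining this with the compact embedding $V(I)\hookrightarrow L^2(I)$ through an Aubin--Lions argument upgrades the convergence to $u_n\to u$ strongly in $C^0([0,T];L^2(I))$. This strong convergence is precisely what is needed to handle the nonlocal term: the scalar coefficients $\|u_n(s)\|_{L^2_p(I)}^2$ converge uniformly on $[0,T]$, while $(u_n(s),\varphi)_p\to(u(s),\varphi)_p$, so the product passes to the limit and $u$ satisfies \eqref{beam_weak}. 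Attainment of the initial data follows from the weak continuity in time together with convergence of the projected data, while the regularity $pu\in C^2([0,T];V'(I))$ follows from Remark \ref{regw}.

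For uniqueness I would take two solutions $u_1,u_2$ with identical data, set $w=u_1-u_2$, and run the energy method on the equation satisfied by $w$: testing (after a suitable time-regularization) with $w_t$ and using that both solutions remain in a fixed ball of $V(I)$ by the energy bound, the nonlocal coefficient is Lipschitz, so a Gronwall inequality forces $w\equiv0$. The main obstacle is the energy identity \eqref{energy} for the limit $u$, since weak lower semicontinuity of the three norms only gives the inequality $\mathfrak E(u(t))\le\mathfrak E(g,h)$ (recall that $u\in C^0([0,T];V(I))\cap C^1([0,T];L^2_p(I))$, so this extends from a.e.\ $t$ to all $t$ by continuity). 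To recover equality I would exploit the conservative, time-reversible structure: for fixed $t_0$, the function $v(s):=u(t_0-s)$ is the unique weak solution with data $(u(t_0),-u_t(t_0))$, so the energy inequality applied to $v$ gives $\mathfrak E(g,h)\le\mathfrak E(u(t_0))$; combined with the reverse inequality this yields \eqref{energy}.
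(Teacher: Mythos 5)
Your Galerkin scheme, the exact energy conservation for the approximations, and the resulting global-in-time bound coincide with the paper's argument, but the compactness step you use afterwards is too weak to finish the proof, and this produces a genuine circularity. Weak-* limits in $L^\infty(0,T;V(I))$ and $W^{1,\infty}(0,T;L_p^2(I))$, even upgraded by Aubin--Lions to strong convergence in $C^0([0,T];L^2(I))$, only yield a limit $u$ which is \emph{weakly} continuous in time with values in $V(I)$ and whose time derivative is weakly continuous with values in $L^2_p(I)$; they do not give $u\in C^0([0,T];V(I))\cap C^1([0,T];L_p^2(I))$, which is precisely what the theorem asserts, what Definition \ref{weaks} demands (including the strong attainment $\|u(t)-g\|_{V(I)}\to0$, $\|u_t(t)-h\|_{L^2_p(I)}\to0$), and what your uniqueness statement is formulated for. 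You assert this regularity only parenthetically and never prove it. The sole mechanism in your plan that could recover it is the energy identity \eqref{energy}, but your proof of \eqref{energy} (time reversal plus uniqueness applied to $v(s)=u(t_0-s)$) can be invoked only after $u$, hence $v$, is known to lie in the uniqueness class $C^0([0,T];V(I))\cap C^1([0,T];L_p^2(I))$: the identity is needed to get the regularity, and the regularity is needed to get the identity. The paper breaks this circle with two devices you are missing. First, it proves that the Galerkin sequence is a \emph{Cauchy sequence} in $C^0([0,T];V(I))\cap C^1([0,T];L_p^2(I))$ (by comparing the conserved energies of $u^n$ and $u^m$), so the limit inherits the strong continuity and \eqref{energy} passes to the limit as an equality, with no appeal to lower semicontinuity. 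Second, it proves \eqref{energy} for an \emph{arbitrary} weak solution by expanding it on the basis of Proposition \ref{general facts}, noting that each Fourier coefficient solves the linear equation $\ddot c_j+\lambda_j c_j+\|u(t)\|_{L_p^2(I)}^2\,c_j=0$, multiplying by $\dot c_j$ and summing; this exploits the nonlocal structure of the nonlinearity and is what makes the uniqueness argument quantitative for every solution, not just the constructed one. Incorporating either device would repair your plan; without one of them it does not produce a solution in the stated class.

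A second, smaller soft spot is the uniqueness step itself. For $w=u_1-u_2$ one cannot simply ``test with $w_t$'': admissible test functions in \eqref{beam_weak} (and in the formulation of Remark \ref{regw}) must take values in $V(I)$, whereas $w_t(t)$ only takes values in $L^2_p(I)$. This is the classical obstruction to energy-method uniqueness for wave-type equations in the energy class, and your phrase ``after a suitable time-regularization'' names the crux rather than resolving it: mollifying in time does make $\varphi=w^\e_t$ admissible (the derivative falls on the mollifier), but then the whole work consists in controlling the commutators and the temporal boundary terms as $\e\to0$. The paper sidesteps this entirely by the Holubov\'a--Matas trick: it tests with the integrated function $w(x,t,\sigma)=-\int_t^\sigma u(x,\tau)\,d\tau$, which is $V(I)$-valued for free because $u$ itself is, and then closes with Gronwall's lemma. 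You should either adopt that test function or spell out the mollification argument; as written, the uniqueness sketch skips its only nontrivial point.
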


\subsection{The oscillating modes of the multiply hinged beam}
%\label{weighteigenpb}
 The linear stability analysis we plan to perform on problem \eqref{beamEV} is strictly related to the fundamental modes of oscillation of the multiply hinged non homogeneous beam, i.e., to the eigenfunctions of the weighted eigenvalue problem:
\begin{equation}
\begin{cases}
e''''(x)=\lambda \,p(x) e(x) \qquad x\in I:=(-\pi,\pi)\\
e(\pm\pi)=e''(\pm \pi)=0\\e(\pm a\pi)=0\,.
\end{cases}
\label{beam0}
\end{equation}
We say that $e=e_{\lambda}$ solves \eqref{beam0} in weak sense if $e\in V(I)$ and
\begin{equation}
\int_I e''v''\,dx=\lambda\,\int_I p(x)\,ev\,dx\qquad \forall v\in V(I).
\label{beamweak}
\end{equation}
We characterize the solutions of \eqref{beam0} as follows.
\begin{proposition}\label{general facts}
Let $p\in P_{\alpha,\beta}$. Then all the eigenvalues of  \eqref{beam0} are simple and can be represented by means of an increasing and divergent sequence $\lambda_j(p)$ ($j\in \N_+$). Furthermore, the corresponding eigenfunctions $e_j=e_{\lambda_j}$ form a complete system in $L^2_p(I)$ and in $V(I)$. 
 \end{proposition}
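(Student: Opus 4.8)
The plan is to obtain discreteness, positivity, divergence and completeness from the spectral theorem for compact self-adjoint operators, and then to establish simplicity separately from the ODE structure of \eqref{beam0}, which is the only place where the fourth-order nature of the problem really enters. Thanks to the conditions encoded in $V(I)$, the form $(u,v)\mapsto(u'',v'')$ is a genuine scalar product inducing a norm equivalent to the one of $H^2(I)$ (a Poincar\'e-type inequality), and the embedding $V(I)\hookrightarrow H^2(I)\hookrightarrow\hookrightarrow L^2(I)\simeq L^2_p(I)$ is compact by Rellich's theorem. I would then define the solution operator $T\colon L^2_p(I)\to V(I)$ by letting $Tf$ be the unique element of $V(I)$ with $(Tf,v)_V=(f,v)_p$ for all $v\in V(I)$; this is well posed by the Riesz representation theorem, since $v\mapsto(f,v)_p$ is bounded on $V(I)$. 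Viewed as an endomorphism of $L^2_p(I)$ through the compact embedding, $T$ is compact; it is self-adjoint and positive for $(\cdot,\cdot)_p$, because $(Tf,g)_p=(Tf,Tg)_V=(f,Tg)_p$ and $(Tf,f)_p=\|Tf\|_V^2\geq 0$, and it is injective (if $Tf=0$ then $(f,v)_p=0$ for all $v\in V(I)$, which is dense in $L^2_p(I)$, so $f=0$). The spectral theorem then provides a non-increasing sequence of positive eigenvalues $\mu_j\to0^+$ whose eigenfunctions $\{e_j\}$ form an orthonormal basis of $L^2_p(I)$; putting $\lambda_j:=1/\mu_j$ recovers the weak formulation \eqref{beamweak} with the same $e_j$, yielding the divergent sequence $\lambda_j(p)$ and completeness in $L^2_p(I)$. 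Completeness in $V(I)$ is then immediate: from \eqref{beamweak} the $e_j$ are orthogonal also in $V(I)$, and if $w\in V(I)$ satisfies $(w,e_j)_V=0$ for all $j$, then $\lambda_j(w,e_j)_p=0$ gives $(w,e_j)_p=0$ for all $j$, hence $w=0$.

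It remains to prove that each eigenvalue is simple, and this I expect to be the main obstacle, since it cannot come from the abstract theory and has to be read off \eqref{beam0}. A preliminary regularity step is needed: on each of the three subintervals cut out by the piers, $e''''=\lambda p e\in L^\infty(I)$ gives $e\in H^4_{\mathrm{loc}}\hookrightarrow C^3$, so $e$ solves the ODE classically there and the associated first-order system $Y'=A(x)Y$, with $Y=(e,e',e'',e''')$ and $A\in L^\infty$, has a uniquely solvable Cauchy problem. Testing \eqref{beamweak} and integrating by parts then identifies the transmission conditions: $e,e',e''$ are continuous across $\pm a\pi$ while $e'''$ is allowed to jump (the piers act as point forces), to be combined with $e(\pm\pi)=e''(\pm\pi)=0$ and $e(\pm a\pi)=0$.

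To prove simplicity I would exploit that $p$ is even to split the eigenfunctions into symmetric and antisymmetric ones, reducing \eqref{beam0} to two problems on $[0,\pi]$ with a single internal constraint at $a\pi$. In each class a shooting argument applies: the admissible Cauchy data at $0$ form a two-parameter family, the constraint $e(a\pi)=0$ cuts it down to at most one parameter, and the two conditions at $\pi$ select a one-dimensional eigenspace. The hard part is to discard the degenerate configurations, i.e.\ to rule out a nontrivial eigenfunction vanishing identically on a whole subinterval: such a function would carry Cauchy data $(0,0,0,J)$ at an interior pier and, by the remaining transmission and boundary conditions, would have to satisfy an over-determined system forcing $J=0$ through the unique continuation granted by the ODE, whence it vanishes everywhere. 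I would finally verify that a symmetric and an antisymmetric eigenfunction cannot correspond to the same $\lambda$, so that the two reduced spectra combine into the single strictly increasing sequence $\lambda_j(p)$. Making these exclusions rigorous uniformly in $\lambda$ and in $p\in P_{\alpha,\beta}$, rather than merely generically, is where the genuine difficulty lies.
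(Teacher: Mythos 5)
Your operator-theoretic part is correct and is essentially what the paper means when it says that discreteness, divergence and completeness ``follow from the standard spectral theory of self-adjoint operators''; your transmission conditions ($e,e',e''$ continuous across $\pm a\pi$, $e'''$ free to jump) also agree with the paper's Proposition \ref{regolarita}. The heart of the statement, however, is simplicity, and there your proposal is a plan rather than a proof, as you concede yourself in your last sentence. Two concrete failures. First, the dimension count behind ``the two conditions at $\pi$ select a one-dimensional eigenspace'' is not even tight: the jump of $e'''$ at the pier is an extra shooting parameter, so in (say) the even class you have two parameters at $x=0$ \emph{plus} the jump at $a\pi$, i.e.\ three parameters against the three conditions $e(a\pi)=e(\pi)=e''(\pi)=0$; nothing in this count excludes a two-dimensional solution space, which is precisely what simplicity must rule out. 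Second, ``unique continuation\dots forcing $J=0$'' is not a correct statement: Cauchy data $(0,0,0,J)$ at a pier generate a nontrivial solution for every $J\neq 0$, and whether that solution can match the remaining boundary conditions is again the same transversality question, not something unique continuation settles. Finally, you give no argument at all that an even and an odd eigenfunction cannot share an eigenvalue, which is another multiplicity-two configuration your parity reduction leaves open.

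The missing idea --- and the actual content of the paper's proof, its Lemma \ref{nehari}, of Leighton--Nehari type --- is a positivity lemma that disposes of all these degeneracies at once. Every eigenfunction satisfies $e(-\pi)=e''(-\pi)=0$, so if some eigenvalue $\lambda$ had a two-dimensional eigenspace, one could pick a nontrivial combination $w$ normalized so that $w'(-\pi)=0$ and $w'''(-\pi)=A>0$; its full Cauchy data at $-\pi$ are then $(0,0,0,A)$. Integrating the equation twice from $-\pi$ (using only these data and the a.e.\ equation on $I_-$) gives
\[
w''(x)=A\,(x+\pi)+\lambda\int_{-\pi}^{x}(x-t)\,p(t)\,w(t)\,dt ,
\]
and a first-zero argument propagates the signs: if $x_0\in(-\pi,-a\pi]$ were the first zero of $w$, then $w>0$ on $(-\pi,x_0)$ would force $w''>0$ there, hence $w'>0$ (since $w'(-\pi)=0$), hence $w$ strictly increasing on $(-\pi,x_0]$, contradicting $w(x_0)=0$. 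Thus $w(-a\pi)>0$, incompatible with the pier condition $w(-a\pi)=0$. Note that the only structural facts used are $\lambda>0$ and $p>0$, so the argument is automatically uniform in $\lambda$ and in $p\in P_{\alpha,\beta}$ --- exactly the uniformity you identified as the obstacle --- and it requires no parity splitting, no comparison of the even and odd spectra, and no genericity.
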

About the regularity of the eigenfunctions, we let $I_-:=(-\pi,-a\pi)$, $I_0:=(-a\pi,a\pi)$ and $I_+:=(a\pi,\pi)$, so that $\overline I=\overline I_-\cup \overline I_0 \cup \overline I_+$, and we prove the following.
\begin{proposition}\label{regolarita}
Let $p\in P_{\alpha,\beta}$. If $e=e_{\lambda}\in V(I)$ satisfies \eqref{beamweak}, then it solves \eqref{beam0} a.e. in $I$. Furthermore, writing
\begin{equation}\label{u}
e(x)=\left\{\begin{array}{ll}
e_-(x) &  x\in \overline I_-\\
e_0(x) & x\in \overline I_0\\
e_+(x) & x\in \overline I_+\,,
\end{array}
\right.
\end{equation}
it holds
$$
e\in C^2(\overline I),\qquad e_-\in C^3(\overline I_-), \qquad e_0\in C^3(\overline I_0),\qquad e_+\in C^3(\overline I_+).
$$	
\end{proposition}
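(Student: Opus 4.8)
The plan is to bootstrap regularity from the weak formulation \eqref{beamweak} by exploiting the structure of the problem: on each subinterval the density $p$ is an $L^\infty$ function, so the equation $e''''=\lambda p e$ has a right-hand side whose regularity we can progressively improve, while the matching/boundary conditions are encoded in the requirement that $e\in V(I)$. First I would localize: test \eqref{beamweak} against $v\in C_c^\infty(I_-)$ (respectively $I_0$, $I_+$), which lies in $V(I)$ since it vanishes near the three constrained points. This gives, in the distributional sense on each open subinterval, the identity $\int e''v''=\lambda\int p e v$, i.e. $e''''=\lambda p e$ holds as a distribution on each of $I_-,I_0,I_+$. Since $e\in V(I)\subset H^2(I)$ embeds into $C^1(\overline I)$, the right-hand side $\lambda p e$ is in $L^\infty$ on each subinterval; hence $e''''\in L^\infty$ there, so $e'''\in W^{1,\infty}$ and in particular $e\in C^3$ of each closed subinterval $\overline I_-$, $\overline I_0$, $\overline I_+$. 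This already yields the piecewise $C^3$ claims.

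Next I would upgrade the global regularity to $e\in C^2(\overline I)$. A priori $e\in H^2\cap H^1_0$ gives only $e\in C^1(\overline I)$ and continuity of $e'$ across the interior hinges $\pm a\pi$; the point is to show $e''$ matches across $\pm a\pi$ as well. Here I would integrate by parts in \eqref{beamweak} using the now-established interior regularity: writing $\int_I e''v''\,dx$ as a sum of integrals over the three subintervals and integrating by parts twice on each, the interior boundary terms at $\pm a\pi$ produce jump contributions of the form $[e'']v'\big|$ and $[e''']v\big|$ (with $[\cdot]$ denoting the jump). Since $v\in V(I)$ is free except for $v(\pm a\pi)=0=v(\pm\pi)$ and $v''(\pm\pi)=0$, and since $v'(\pm a\pi)$ is \emph{unconstrained}, testing against functions $v$ with $v'(\pm a\pi)\neq 0$ forces the jump $[e'']=0$ at each interior hinge, which is precisely continuity of $e''$ across $\pm a\pi$, giving $e\in C^2(\overline I)$. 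The jump $[e''']$ at $\pm a\pi$ need not vanish, consistently with the fact that the hinge constraints $e(\pm a\pi)=0$ act as point forces; this is why one only gets $C^2$ globally but $C^3$ on each piece.

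The main obstacle I anticipate is the careful bookkeeping of the boundary terms in the integration by parts, in particular disentangling which conditions come from the membership $e\in V(I)$ (namely $e(\pm\pi)=e(\pm a\pi)=0$ and $e\in C^1$) and which arise as \emph{natural} conditions forced by the arbitrariness of the test function (namely $e''(\pm\pi)=0$ and $[e'']=0$ at $\pm a\pi$). One must choose the test functions $v$ judiciously — a family localized near each hinge with prescribed values of $v'$, and a separate family localized near $\pm\pi$ with prescribed $v'(\pm\pi)$ — to isolate each natural condition in turn. A secondary technical point is justifying the integrations by parts, which is legitimate precisely because we have already shown $e\in C^3$ on each closed subinterval, so all the boundary traces of $e''$ and $e'''$ at $\pm a\pi$ and $\pm\pi$ are well defined. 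Once the natural conditions are extracted, the pointwise a.e. validity of \eqref{beam0} follows immediately from the interior distributional identity together with the recovered boundary conditions.
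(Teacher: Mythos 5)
Your proposal is correct and takes essentially the same route as the paper's proof: localization with $C_c^\infty$ test functions on each span to obtain the distributional identity $e''''=\lambda p e$ and the piecewise $C^3$ regularity (the paper verifies this bootstrap step by hand, writing $e''$ as a double primitive plus an affine function via $T''=0$, rather than quoting the standard 1D regularity fact you invoke), followed by a double integration by parts in \eqref{beamweak} in which the jump terms $[e'']\,v'(\pm a\pi)$ are annihilated by the arbitrariness of $v'(\pm a\pi)$, giving $e\in C^2(\overline I)$. If anything, your bookkeeping is slightly more complete than the paper's displayed identity, which silently drops the boundary terms $e''(\pm\pi)v'(\pm\pi)$ (whose vanishing yields the natural condition $e''(\pm\pi)=0$), though this does not affect the paper's conclusion.
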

\begin{remark}
	If $p\in C^0(\overline I_-)$, we obtain $e_-\in C^4(\overline I_-)$; similarly, if $p\in C^0(\overline I_0)$ (resp., $p\in C^0(\overline I_+)$), then $e_0\in C^4(\overline I_0)$ (resp., $e_+\in C^4(\overline I_+)$). Anyway, if $p\in C^0(\overline I)$ we do not obtain more than $e\in C^2(\overline I)$, see \cite{holubova}.
\end{remark}

\section{The linear stability analysis}\label{stab}

In this section, we proceed with the linear stability analysis for bi-modal solutions of \eqref{beamEV}.
% We then aim at optimizing the stability of the structure, acting on $p$ and $a$ in order to raise, as much as possible, such an energy threshold of instability. 
% in case of a two-step piecewise density.
%Given a density $p \in P_{\alpha, \beta}$, we investigate which is a  is to determine an optimal placement for the two symmetric piers in order to prevent dangerous exchanges of energy between the possible modes of oscillation. As usual in classical engineering literature, we devote our attention
\subsection{The critical energy thresholds}\label{critical energies}
We start by characterizing the uni-modal solutions of \eqref{beamEV}, having the form 
\begin{equation}\label{1modo}
u_\lambda(x,t)=W_\lambda(t)e_\lambda(x), 
\end{equation}
where $\{e_\lambda\}_\lambda$ is the complete system of eigenfunctions
provided by Proposition \ref{general facts}; from now on, we assume that each $e_\lambda$ is \emph{normalized in $L^2_p(I)$}. To simplify our analysis, we restrict our attention to the case of potential initial conditions, namely we will assume henceforth that the function $h$ appearing in \eqref{beamEV} is identically zero; such a choice is not restrictive and does not affect the results (see for instance \cite[Chapter 3]{GarGazBK} for an explanation).
For a fixed eigenvalue $\lambda$ of \eqref{beam0}, we assume that $g(x)=\zeta e_\lambda(x)$; inserting \eqref{1modo} into \eqref{beamEV} and taking the $L^2$-scalar product with $e_\lambda$, we then obtain the Cauchy problem
\begin{equation}\label{duffingc}
\ddot{W}_\lambda(t)+\lambda W_\lambda(t)+W_\lambda(t)^3=0,\quad W_{\lambda}(0)=\zeta,\quad\dot{W}_\lambda(0)=0,
\end{equation}
where the differential equation is of Duffing type. 
It is well-known that all the solutions of such a Cauchy problem can be expressed in terms of Jacobi functions and are periodic, with period given by 
$$
T=T(\zeta)=\frac{4}{b}\int_0^{\pi/2}\frac{d\varphi}{\sqrt{1-\gamma^2\sin^2\varphi}},
$$
where 
$
b=\sqrt{\lambda+\zeta^2}$ and $\gamma=\frac{\zeta}{b}\sqrt{\frac{1}{2}}$.
%being $\zeta=\sqrt{\lambda^2+2\zeta_2^2+\zeta_1^4+2\lambda \zeta_1^2}-\lambda$. 
We call $u_\lambda$ defined in \eqref{1modo} a \emph{$\lambda$-nonlinear-mode} of \eq{beamEV}.
We then consider bi-modal solutions of \eqref{beamEV} - in the sense of Definition \ref{weaks} - having the form 
\begin{equation}\label{form3}
u(x,t)=w(t)e_\lambda(x)+z(t)e_\nu(x),
\end{equation}
where $e_\lambda$ and $e_\nu$ are two different eigenfunctions of \eqref{beam0}, with associated eigenvalues respectively given by $\lambda$ and $\nu$. Incidentally, we notice that a solution of \eqref{beamEV} will indeed have this form if the initial data $g$ and $h$ are concentrated on such two modes. 
Inserting \eq{form3} into \eq{beamEV}, we obtain the nonlinear differential system
\begin{equation}\label{sistema2}
\left\{\begin{array}{l}
\ddot{w}(t)+\lambda w(t)+(w(t)^2+z(t)^2)w(t)=0\\
\ddot{z}(t)+\nu z(t)+(w(t)^2+z(t)^2)z(t)=0,  
\end{array}\right.
\end{equation}
which we consider together with the initial conditions
\begin{equation}\label{iniziali}
w(0)=\zeta>0, \, \dot{w}(0)=0, \qquad z(0)=z_0 \ll \zeta, \, \,\dot{z}(0)=0.
\end{equation}
We are thus choosing $e_\lambda$ as the prevailing mode and $e_\nu$ as the residual mode, in the sense explained in the Introduction. 
If $z_0=0$, then the solution of \eq{sistema2}-\eq{iniziali} is given by the couple $(\bar{W}_{\zeta, \lambda}, 0)$, where $\bar{W}_{\zeta, \lambda}$ is the $T$-periodic solution of \eqref{duffingc}; of course, here no energy transfer between the two modes is observed, since $z$ remains constantly equal to $0$.
We then consider small perturbations of this situation, wondering if $(\bar{W}_{\zeta, \lambda}, 0)$ is linearly stable as a solution of \eqref{sistema2} when $\vert z_0 \vert > 0$ is small. More formally, we say that:
\begin{definition}
%\label{linstab}
The $\lambda$-nonlinear mode is {\bf linearly stable} ({\bf unstable}) with respect to the $\nu$-nonlinear-mode if
$\xi\equiv0$ is a stable (unstable) solution of the linear Hill equation
\begin{equation}\label{hill}
\ddot{\xi}(t)+(\nu + \bar{W}_{\zeta, \lambda}(t)^2) \xi(t)=0.
\end{equation}
\end{definition}
The analysis of the stability for \eqref{hill} was done in details in \cite{GarGazBK}, showing that it is equivalent to the study of the stability of an equation of the kind
$$
\ddot{\xi}(t)+\left(\frac{\nu}{\lambda}+\Psi_\lambda(t)^2\right)\xi(t)=0,
$$
with $\Psi_\lambda$ $T$-periodic, which turns out to be completely characterized. We can then use \cite[Proposition 3.4]{GarGazBK}, which in our framework reads as follows.
\begin{proposition}\label{stabilita}
Let $\lambda\neq\nu$ be two eigenvalues of \eqref{beam0}. The $\lambda$-nonlinear-mode of amplitude $\zeta$ is linearly stable with respect to the $\nu$-nonlinear-mode
if and only if 
$$
\text{either }\lambda>\nu\mbox{ and }\zeta>0\qquad\mbox{or}\qquad\lambda<\nu\mbox{ and }\zeta \leq \sqrt{2(\nu-\lambda)}.
$$
\end{proposition}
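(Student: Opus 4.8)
The plan is to reduce the Hill equation \eqref{hill} to the parameter-free form already analyzed in \cite{GarGazBK} and then invoke \cite[Proposition 3.4]{GarGazBK}. Since $\lambda\neq\nu$ is assumed, the degenerate case of equal eigenvalues never occurs, so only the two regimes $\lambda>\nu$ and $\lambda<\nu$ have to be discussed. The preliminary (and essentially bookkeeping) point to check is that, because the $e_\lambda$ are normalized in $L^2_p(I)$, the coefficients in system \eqref{sistema2} are exactly $1$: plugging \eqref{form3} into \eqref{beamEV}, taking the $L^2_p$-scalar product with $e_\lambda$ and $e_\nu$ and using $(e_\lambda,e_\nu)_p=\delta_{\lambda\nu}$ together with $e''''=\lambda p\,e$, one gets $\|u\|_{L^2_p(I)}^2=w^2+z^2$ and hence the pure cubic nonlinearity $w^2+z^2$. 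This is what guarantees that the linearized equation is precisely \eqref{hill} and that the rescaled problem will coincide with the one treated in the reference.

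First I would rescale time so as to normalize the Duffing equation \eqref{duffingc}. Setting $s=\sqrt\lambda\,t$ and $\Psi_\lambda(s):=\lambda^{-1/2}\,\bar W_{\zeta,\lambda}(s/\sqrt\lambda)$, a direct computation shows that $\Psi_\lambda$ solves $\Psi_\lambda''+\Psi_\lambda+\Psi_\lambda^3=0$ with $\Psi_\lambda(0)=\zeta/\sqrt\lambda$ and $\Psi_\lambda'(0)=0$; in particular $\Psi_\lambda$ is periodic in the variable $s$ and its amplitude equals $A:=\zeta/\sqrt\lambda$, since the solution starts at rest at its maximum. Writing $\xi(t)=\eta(\sqrt\lambda\,t)$ in \eqref{hill} and using $\bar W_{\zeta,\lambda}(t)^2=\lambda\,\Psi_\lambda(\sqrt\lambda\,t)^2$, equation \eqref{hill} becomes exactly $\eta''(s)+\bigl(\tfrac{\nu}{\lambda}+\Psi_\lambda(s)^2\bigr)\eta(s)=0$, i.e. the canonical Hill equation anticipated just before the statement, with frequency parameter $\nu/\lambda$ and normalized amplitude $A$.

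Next I would observe that $t\mapsto s=\sqrt\lambda\,t$ is a linear, invertible rescaling of time, so it maps bounded solutions to bounded solutions and therefore preserves the stability/instability character of the trivial solution. Consequently, the $\lambda$-nonlinear-mode of amplitude $\zeta$ is linearly stable with respect to the $\nu$-nonlinear-mode if and only if $\eta\equiv0$ is stable for the canonical equation. At this point the analytic core is already supplied by \cite[Proposition 3.4]{GarGazBK}, which completely characterizes the stability of that equation: $\eta\equiv0$ is stable precisely when either $\nu/\lambda<1$ (for every amplitude $A>0$), or $\nu/\lambda>1$ and $A\le\sqrt{2(\nu/\lambda-1)}$.

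The final step is purely algebraic. Recalling $A=\zeta/\sqrt\lambda$, the condition $\nu/\lambda<1$ is equivalent to $\lambda>\nu$, while $A\le\sqrt{2(\nu/\lambda-1)}$ squares to $\zeta^2\le2(\nu-\lambda)$, i.e. $\zeta\le\sqrt{2(\nu-\lambda)}$ (using $\zeta>0$); this reproduces the two alternatives in the statement. I do not expect a genuine obstacle here, since the delicate part — the spectral picture of the Hill equation that produces these explicit thresholds — is imported from \cite{GarGazBK} rather than re-proved. The only point requiring real care is to make sure that the reduction is faithful, namely that the normalization in $L^2_p(I)$ yields the clean system \eqref{sistema2} and that the frequency parameter of the rescaled equation is exactly $\nu/\lambda$, so that \cite[Proposition 3.4]{GarGazBK} applies verbatim.
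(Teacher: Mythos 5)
Your proposal is correct and follows essentially the same route as the paper: the paper gives no separate proof of Proposition \ref{stabilita}, but rather observes (just before the statement) that \eqref{hill} rescales to the canonical equation $\ddot{\xi}+\bigl(\tfrac{\nu}{\lambda}+\Psi_\lambda^2\bigr)\xi=0$ and then imports \cite[Proposition 3.4]{GarGazBK} verbatim, exactly as you do. Your only addition is to write out explicitly the time rescaling $s=\sqrt{\lambda}\,t$, the normalization $\Psi_\lambda=\lambda^{-1/2}\bar W_{\zeta,\lambda}(\cdot/\sqrt{\lambda})$, and the algebra converting the threshold $A\leq\sqrt{2(\nu/\lambda-1)}$ into $\zeta\leq\sqrt{2(\nu-\lambda)}$ — details the paper delegates to \cite{GarGazBK}.
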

%\begin{figure}[ht!]
%\begin{center}
%\includegraphics[scale=1]{Figura.png}
%\caption{Regions of linear stability (white) and instability (gray) for \eqref{hill}.}\label{completa}
%\end{center}
%\end{figure}
% In Figure \ref{completa} we give a complete characterization of the stability, which holds also in our setting; 
From Proposition \ref{stabilita} we finally derive our critical energy thresholds of instability. Consider the solution of \eqref{beamEV} having the form \eqref{form3}; by Proposition \ref{stabilita}, the critical amplitude of instability is then defined as
\begin{equation*}
%\label{defD}
D(\lambda,\nu)= \sqrt{2(\nu-\lambda)},
\end{equation*}
namely the initial amplitude $w(0)$ which leads to enter the region of linear instability. The critical energy of instability is then defined as the (constant) energy of the solution of \eqref{duffingc} when $\zeta=D(\lambda,\nu)$ in \eq{iniziali}:
\begin{equation}\label{ecritica}
E(\lambda,\nu):=\frac{\lambda D(\lambda,\nu)^2}{2}+\frac{D(\lambda,\nu)^4}4=\left(\frac{\nu}{\lambda}-1\right)\lambda\, \nu
\end{equation}
(recall that we start with zero kinetic datum). By \eq{ecritica} we immediately see that a crucial parameter of stability for \eqref{hill} is the ratio between the eigenvalues corresponding to the active modes $e_\lambda$ and $e_\nu$; in the next section, we analytically study this quantity on varying of $p$. 

\subsection{Maximizing the ratio of two eigenvalues}

\label{stablin}
The results of Section \ref{critical energies} suggest that a possible way to prevent the energy transfer from a lower to a higher mode is to look for density functions which increase the energy threshold \eq{ecritica}; since this can be done by increasing the ratios of eigenvalues, it appears natural to study the maximum problem
\begin{equation}\label{opt}
\mathcal{R}_{\nu,\lambda}=\mathcal{R}_{\nu,\lambda}(a)=\sup_{p\in P_{\alpha,\beta}}\dfrac{\nu(p)}{\lambda(p)}\qquad (\nu>\lambda \text{ eigenvalues of \eq{beam0}}).
\end{equation}
For the vibrating string, a similar problem was studied in \cite{keller}, without the integral constraint on $p$, and in \cite{banks-gentry}, assuming $\alpha=0$; in both cases, the authors proved that the maximum of the ratio is achieved by a weight of \emph{bang-bang} type, namely a piecewise constant function, symmetric with respect to the middle of the string and getting the maximum value there. See also \cite{befa}, where partial results were obtained for the same maximum problem for partially hinged plates. Broadly speaking, problem \eq{opt} can be related to the well known \emph{composite membrane} and \emph{composite plate} problems, i.e., the problems of building a body of prescribed shape and mass out of given materials, in such a way that the first frequency of the resulting membrane (or plate) is as small as possible, see e.g. \cite{anedda2,befafega,chanillo,chen,CV2,cox,cuccu22,lapr} and the monograph \cite{henrot}.   Coming back to problem \eqref{opt}, we first notice the following:
 \begin{proposition}\label{existence}
 Let $\nu>\lambda$ be eigenvalues of \eqref{beam0}; then, problem \eqref{opt} admits a solution. 
 \end{proposition}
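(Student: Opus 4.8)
The plan is to apply the direct method of the calculus of variations, the only genuinely delicate point being the continuity of the eigenvalues with respect to the density. I would fix once and for all the two indices $j<k$ for which $\lambda=\lambda_j(p)$ and $\nu=\lambda_k(p)$; by Proposition \ref{general facts} all eigenvalues are simple, so these indices are unambiguous and stay fixed throughout. Taking a maximizing sequence $(p_n)\subset P_{\alpha,\beta}$ for \eqref{opt}, the bound $\alpha\le p_n\le\beta$ makes $(p_n)$ bounded in $L^\infty(I)$, so up to a subsequence $p_n\to p^*$ weakly-$*$ in $L^\infty(I)$. The first, routine, step is to verify that $p^*\in P_{\alpha,\beta}$: testing against the constant $1\in L^1(I)$ preserves $\int_I p^*\,dx=2\pi$; the box constraint $\alpha\le p^*\le\beta$ survives because $\{\alpha\le p\le\beta\}$ is convex and strongly closed in $L^2(I)$, hence weakly-$*$ closed; and the symmetry $p^*(x)=p^*(-x)$ passes to the limit, being a linear constraint.

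The heart of the matter is to prove that $\lambda_j(p_n)\to\lambda_j(p^*)$ for every fixed $j$. I would recast \eqref{beamweak} through the solution operator $K_p\colon V(I)\to V(I)$, where $K_pf$ is the unique $u\in V(I)$ with $\int_I u''v''\,dx=\int_I p\,f\,v\,dx$ for all $v\in V(I)$ (well posed since the left-hand side is exactly $(u,v)_V$). A direct computation gives $(K_pf,g)_V=\int_I p\,f\,g\,dx$, so $K_p$ is self-adjoint and positive on $(V(I),(\cdot,\cdot)_V)$; moreover the a priori bound $\|K_pf\|_V\le C\|f\|_{L^2(I)}$ (with $C$ depending on $\beta$) combined with the compact embedding $V(I)\hookrightarrow\hookrightarrow L^2(I)$ shows that $K_p$ is compact. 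Its eigenvalues are precisely $\mu_j(p)=1/\lambda_j(p)$.

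The key convergence property I would establish is: if $p_n\to p^*$ weakly-$*$ in $L^\infty(I)$ and $f_n\weakly f$ in $V(I)$, then $K_{p_n}f_n\to K_{p^*}f$ \emph{strongly} in $V(I)$. Setting $u_n=K_{p_n}f_n$, the compact embedding yields $f_n\to f$ strongly in $L^2(I)$, while the a priori bound makes $(u_n)$ bounded in $V(I)$; passing to the limit in $\int_I u_n''v''=\int_I p_nf_nv$, after splitting $p_nf_nv=p_n(f_n-f)v+p_n(fv)$ and using that $fv\in L^1(I)$ against the weak-$*$ convergence, identifies the weak limit as $K_{p^*}f$. Strong convergence then follows by testing with $v=u_n$: since $f_nu_n\to fu$ in $L^1(I)$ and $(p_n)$ is bounded in $L^\infty(I)$, one gets $\|u_n\|_V^2=\int_I p_nf_nu_n\to\int_I p^*fu=\|K_{p^*}f\|_V^2$, so the weak convergence upgrades to strong.

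From this compact convergence of $K_{p_n}$ to $K_{p^*}$, eigenvalue convergence follows through the Courant--Fischer formula $\mu_j(p)=\max_{\dim W=j}\min_{u\in W\setminus\{0\}}\int_I pu^2/\|u\|_V^2$: inserting the eigenspace of $p^*$ as a test subspace gives the lower bound $\liminf_n\mu_j(p_n)\ge\mu_j(p^*)$, whereas extracting the weak $V$-limit of the normalized $j$-th eigenfunctions of $K_{p_n}$ and invoking the convergence property shows that every cluster value of $\mu_j(p_n)$ is a (positive) eigenvalue of $K_{p^*}$ with a nontrivial eigenfunction; since all eigenvalues are simple and the argument applies simultaneously to every index via orthonormal eigenfunction systems, a standard bookkeeping matches the indices and yields $\mu_j(p_n)\to\mu_j(p^*)$, i.e. $\lambda_j(p_n)\to\lambda_j(p^*)$. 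Consequently $\nu(p_n)/\lambda(p_n)\to\nu(p^*)/\lambda(p^*)$, and since the left-hand side tends to $\mathcal{R}_{\nu,\lambda}$, the density $p^*$ is a maximizer. The main obstacle is exactly this eigenvalue continuity: weak-$*$ convergence of $(p_n)$ does \emph{not} produce operator-norm convergence of $K_{p_n}$, so the elementary Lipschitz dependence of eigenvalues on the operator is unavailable, and one must instead exploit the compact Sobolev embedding to turn weak convergence of the test functions into strong $L^2$ convergence, which is precisely what lets the nonlinear pairing $\int_I p_nf_nu_n$ pass to the limit.
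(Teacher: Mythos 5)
Your proof is correct, and its core --- the continuity of the eigenvalues under weak-$*$ convergence of the densities --- is established by a genuinely different mechanism than the paper's. The shared skeleton is the same: weak-$*$ sequential compactness of $P_{\alpha,\beta}$ (your verification that $p^*\in P_{\alpha,\beta}$ reproduces the paper's Lemma \ref{compactness}), the compact embedding $V(I)\hookrightarrow\hookrightarrow L^2(I)$ to pass to the limit in the weak eigenvalue equation, and orthogonality of the limit eigenfunctions plus simplicity (Proposition \ref{general facts}) to conclude that the limit values are \emph{distinct} eigenvalues of the limit problem. The divergence is in the index-matching step. The paper (Lemma \ref{continuity}) shows that the limit values exhaust the \emph{whole} spectrum of the limit weight: it proves $\overline\lambda_h\to+\infty$ and then invokes Auchmuty's dual variational principle (Lemma \ref{lemmaauchmuty}) to exclude a missed eigenvalue, so that index matching follows by order preservation; the min-max characterization \eqref{caract1} is used only with $p\equiv 1$, to get the uniform bound \eqref{eb}. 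You instead run a two-sided squeeze that makes Auchmuty's principle and the exhaustion/divergence argument unnecessary: the Courant--Fischer formula with the fixed finite-dimensional eigenspace of $p^*$ as test subspace gives $\liminf_n\mu_j(p_n)\ge\mu_j(p^*)$, while your collective-compactness property of the solution operators ($K_{p_n}f_n\to K_{p^*}f$ strongly in $V(I)$ whenever $f_n\rightharpoonup f$) shows that the cluster values $\overline\mu_1>\overline\mu_2>\dots>\overline\mu_j$ form a strictly decreasing string of eigenvalues of $K_{p^*}$, whence $\overline\mu_j\le\mu_j(p^*)$, and equality follows. Your route is thus more self-contained, at the price of the test-subspace inequality for varying weights, which requires the convergence $\int_I(p_n-p^*)u^2\,dx\to 0$ to be uniform on the unit sphere of a fixed finite-dimensional subspace (true, since only finitely many pairings $\int_I(p_n-p^*)e_ie_k\,dx$ are involved); the paper's route proves more along the way, namely full spectral convergence. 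Two points you label ``standard bookkeeping'' deserve to be spelled out: the positivity of the cluster values, needed to upgrade weak limits of the normalized eigenfunctions to strong ones and hence to nontrivial eigenfunctions of $K_{p^*}$, is exactly your lower bound $\liminf_n\mu_j(p_n)\ge\mu_j(p^*)>0$; and the counting $\overline\mu_j\le\mu_j(p^*)$ requires carrying the orthonormal systems for \emph{all} indices $i\le j$ simultaneously (a diagonal extraction), not just for the two indices entering the ratio.
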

 
Now we turn to a possible characterization of maximizers of  \eqref{opt}. In this framework, a central role is played by the function
 \begin{equation}\label{gp}
 g(p,x)= g_{\nu,\lambda}(p,x):=\frac{\nu(p)}{\lambda(p)}\big(e_{\lambda}^2(x)-e_{\nu}^2(x)\big)\quad \text{for } \nu>\lambda\,, \, p\in P_{\alpha,\beta}  \text{ and }x\in I \,,
 \end{equation}
 where $e_\lambda$ and $e_\nu$ denote the eigenfunctions associated, respectively, with $\lambda(p)$ and $\nu(p)$. Denoting by $\chi_D$ the characteristic function of a set $D\subset I$ and setting $D^c=I\setminus D$, we prove:
 \begin{theorem}\label{thm-rapp}
Let $\nu>\lambda$ be eigenvalues of \eqref{beam0} and let $\widehat{p}=\widehat{p}(\nu,\lambda)$ denote a maximizer of \eqref{opt}. Then there exists $\widehat t=\widehat t (\nu,\lambda)\in\mathbb{R}$ such that the set
 $$
 \widehat I=\widehat I(\nu,\lambda):=\bigg\{x\in I:  g(\widehat{p},x)\geq \widehat t\bigg\}\,
 $$
satisfies $$|\widehat{I}|=|I|\frac{1-\alpha}{\beta-\alpha}\,.$$
Furthermore, set $A_{\widehat t}:=\big\{x\in I:  g(\widehat{p},x)= \widehat t \, \big\}$, two situations may occur:
 \begin{itemize}
 \item[$(i)$] if $ |A_{\widehat t}|=0$, then
 \begin{equation*}\label{pnu}
 \widehat{p}(x) = \beta \chi_{ \widehat I} (x)+ \alpha \chi_{\widehat I^c}(x)\,\quad \text{for a.e. } x\in I\,;
 \end{equation*}
\item[$(ii)$] if  $ |A_{\widehat t}|>0$, then
$$\widehat{p}(x)=\beta \text{ for a.e. } x\in  \widehat I\setminus A_{\widehat t} \quad  \text{ and } \quad  \widehat{p}(x)=\alpha  \text{ for a.e. } x\in \widehat I^c\setminus A_{\widehat t}\,.$$ 	
\end{itemize}
 \end{theorem}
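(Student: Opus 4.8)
The plan is to treat \eqref{opt} as a constrained optimization problem over the convex set $P_{\alpha,\beta}$ and to read off the structure of any maximizer $\widehat p$ from the first-order optimality conditions. The decisive tool is the shape derivative of the map $p\mapsto \nu(p)/\lambda(p)$, which I expect to coincide with the function $g(\widehat p,\cdot)$ introduced in \eqref{gp}; once this is established, the bang-bang form of $\widehat p$ and the measure of $\widehat I$ follow from a standard bathtub/rearrangement argument together with the mass constraint $\int_I p=|I|=2\pi$.

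First I would compute the derivative of the eigenvalues along an admissible perturbation. Since by Proposition \ref{general facts} every eigenvalue of \eqref{beam0} is simple, analytic perturbation theory applies: for a symmetric, mean-zero $\phi\in L^\infty(I)$ and $p_s:=\widehat p+s\phi\in P_{\alpha,\beta}$ (small $s\ge 0$), the eigenpair $(\lambda(p_s),e_{\lambda,s})$ depends smoothly on $s$. Differentiating the weak formulation \eqref{beamweak} at $s=0$, testing with the normalized eigenfunction $e_\lambda$ (so that $\|e_\lambda\|_{L_p^2(I)}^2=1$ and $\lambda=\|e_\lambda''\|_2^2$), and cancelling the self-adjoint terms, I obtain the Hadamard-type formula $\frac{d}{ds}\lambda(p_s)\big|_{s=0}=-\lambda\int_I\phi\,e_\lambda^2\,dx$, and analogously for $\nu$. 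The quotient rule then yields
\begin{equation*}
\frac{d}{ds}\,\frac{\nu(p_s)}{\lambda(p_s)}\bigg|_{s=0}=\frac{\nu}{\lambda}\int_I\phi\,\big(e_\lambda^2-e_\nu^2\big)\,dx=\int_I\phi\,g(\widehat p,x)\,dx,
\end{equation*}
which identifies $g(\widehat p,\cdot)$ as the gradient of the ratio and explains its role in \eqref{gp}. Since the problem is symmetric, $e_\lambda$ and $e_\nu$ are either even or odd, so $g(\widehat p,\cdot)$ is even, consistently with the symmetry constraint in $P_{\alpha,\beta}$.

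Next I would exploit optimality. At a maximizer $\widehat p$ one has $\int_I\phi\,g(\widehat p,\cdot)\le 0$ for every admissible direction $\phi$, i.e.\ every symmetric, mean-zero $\phi$ with $\phi\ge 0$ on $\{\widehat p=\alpha\}$ and $\phi\le 0$ on $\{\widehat p=\beta\}$. A local mass-swapping argument---move a little mass from a region where $g$ is small to one where $g$ is larger, preserving symmetry and the total mass---shows that this is possible, contradicting optimality, unless $\widehat p$ is \emph{bang-bang} relative to the level sets of $g$: there must exist a threshold $\widehat t$, the Lagrange multiplier of the mass constraint, such that $\widehat p=\beta$ a.e.\ on $\{g(\widehat p,\cdot)>\widehat t\}$ and $\widehat p=\alpha$ a.e.\ on $\{g(\widehat p,\cdot)<\widehat t\}$, while on the level set $A_{\widehat t}=\{g(\widehat p,\cdot)=\widehat t\}$ the constraint is inactive and $\widehat p$ may a priori take any value in $[\alpha,\beta]$. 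This already yields case $(i)$ when $|A_{\widehat t}|=0$ and the qualitative description in case $(ii)$.

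Finally, the measure identity comes from the mass constraint. With $\widehat I=\{g(\widehat p,\cdot)\ge\widehat t\}$ and $\widehat p=\beta\chi_{\widehat I}+\alpha\chi_{\widehat I^c}$ (off $A_{\widehat t}$), imposing $\int_I\widehat p=|I|$ gives $\beta|\widehat I|+\alpha(|I|-|\widehat I|)=|I|$, whence $|\widehat I|=|I|\frac{1-\alpha}{\beta-\alpha}$. The hard part, and the step I would treat most carefully, is the bookkeeping on the level set $A_{\widehat t}$ in case $(ii)$: the super-level sets $\{g\ge t\}$ drop by $|A_{\widehat t}|$ as $t$ crosses $\widehat t$, so one must choose $\widehat t$ so that $\{g\ge\widehat t\}$ has exactly the prescribed measure and check, through the mass balance on $A_{\widehat t}$, that the values of $\widehat p$ there are consistent (in fact, the balance forces $\widehat p=\beta$ a.e.\ on $A_{\widehat t}$). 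This requires a delicate analysis of the distribution function of $g(\widehat p,\cdot)$ at the plateau level, which is precisely where the existence of the correct threshold $\widehat t$ has to be secured.
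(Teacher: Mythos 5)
Your first two steps are sound and in fact run parallel to the paper's own argument. The Hadamard-type formula you derive from simplicity of the eigenvalues (Proposition \ref{general facts}) gives exactly the first variation $\delta\big(\tfrac{\nu}{\lambda}\big)(p)=\int_I g(p,x)\,\delta p(x)\,dx$ that the paper takes from Banks--Gentry, so deriving it rather than citing it is a legitimate, self-contained alternative; and your symmetric mass-swapping argument correctly produces a threshold $t_0$ with $\widehat p=\beta$ a.e.\ on $\{g>t_0\}$ and $\widehat p=\alpha$ a.e.\ on $\{g<t_0\}$, which is precisely the content of conclusions $(i)$ and $(ii)$ (the paper reaches the same point by testing optimality against one global competitor $p_{\widehat\theta}=\beta\chi_{\widehat I}+\alpha\chi_{I\setminus\widehat I}$ instead of infinitesimal swaps).

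The genuine gap is the first assertion of the theorem: the existence of a level $\widehat t$ whose super-level set has measure exactly $|I|\frac{1-\alpha}{\beta-\alpha}$. From your bang-bang structure and the mass constraint one only obtains the two-sided bound
$$
|\{g>t_0\}|\;\leq\;|I|\frac{1-\alpha}{\beta-\alpha}\;\leq\;|\{g\geq t_0\}|,
$$
and when $|A_{t_0}|>0$ the distribution function $t\mapsto|\{g\geq t\}|$ jumps at $t_0$, so no level need have a closed super-level set of the prescribed measure. This is exactly the point to which the paper devotes most of its proof: it introduces the decreasing function $I_p(t)=\beta|S_p(t)|+\alpha|S_p^c(t)|$ (with $S_p(t)=\{g\geq t\}$), locates the crossing level $t_0$, and in the jump case interpolates with the sets $S_\theta=T_p(t_0)\cup\big([-\pi,\theta]\cap A_p(t_0)\big)$, using continuity of $\theta\mapsto I(\theta)$ to produce a set of exactly the right measure sandwiched between $\{g>t_0\}$ and $\{g\geq t_0\}$, on which the competitor is built. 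Your proposed patch for this step --- ``the mass balance forces $\widehat p=\beta$ a.e.\ on $A_{\widehat t}$'' --- is circular: the balance yields $\int_{A_{\widehat t}}\widehat p\,dx=\beta|A_{\widehat t}|$ only under the premise $|\{g\geq\widehat t\}|=|I|\frac{1-\alpha}{\beta-\alpha}$, which is the very fact to be established; in general the balance only fixes $\int_{A_{\widehat t}}\widehat p\,dx$ at some intermediate value, and optimality gives nothing further because the first variation is blind to redistributions of mass inside the plateau $A_{\widehat t}$, where $g$ is constant. Note also that if your claim were provable, case $(ii)$ would collapse into the conclusion of case $(i)$; the theorem deliberately leaves $\widehat p$ undetermined on $A_{\widehat t}$, and your sketch proves neither that stronger statement nor the stated one.
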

 \begin{figure}
	\centering
{\includegraphics[width=7cm]{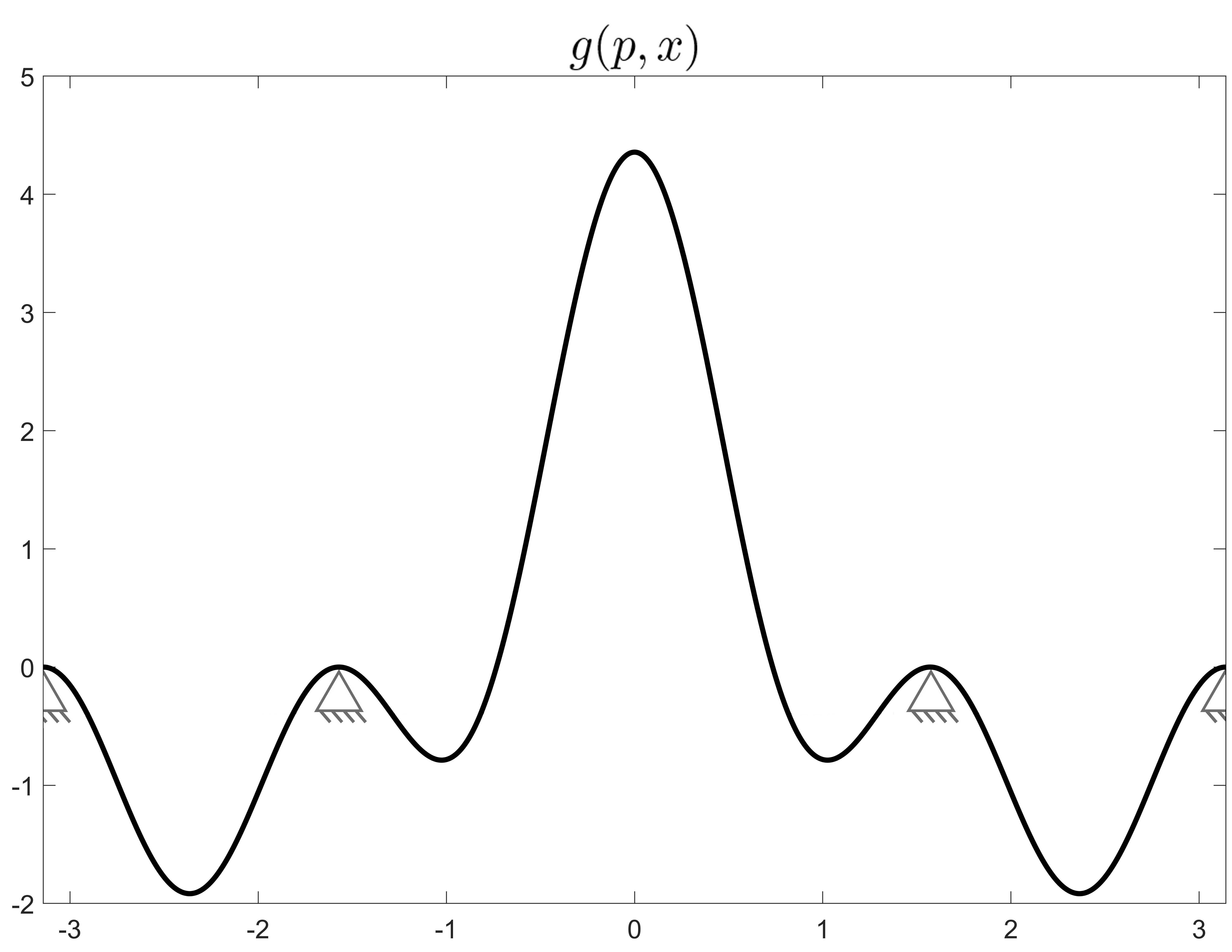}}
\quad
{\includegraphics[width=7.5cm]{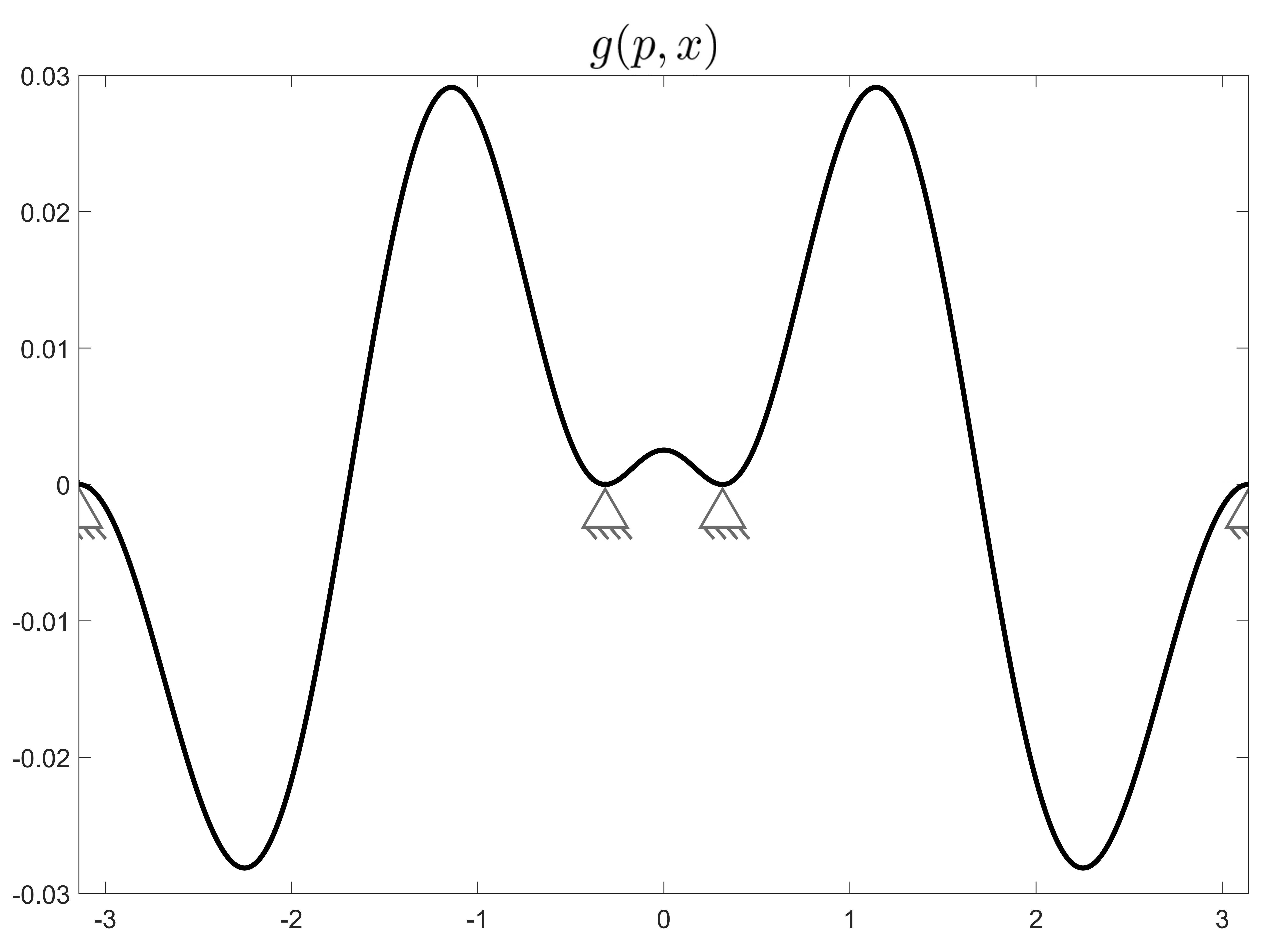}}
\caption{The graphs of $g_{\lambda_2,\lambda_1}(1,x)$ (namely, for $p\equiv1$), with $a=0.5$ (left) and $a=0.1$ (right).}
\label{plotg}
\end{figure}
 
 Theorem \ref{thm-rapp} relates the best location of the materials within the beam, in the sense explained at the beginning of Section \ref{stablin}, to the level sets of the function $g$. In general, it is not easy to study analytically this function; by means of the numerical method outlined in Section \ref{num}, we have plot the graph of $g$ for several choices of $a$ and $p$ and we have always found that its level sets have zero measures, therefore $|A_{\widehat t}|=0$, see e.g. Figure \ref{plotg}. However, we do not have an analytic proof of this fact or, equivalently, of the fact that $(ii)$ does not occur. 
\par
We highlight some properties of the function $g$ in the following proposition, which shows that more information can be desumed when dealing with eigenfunctions $e_\lambda$ and  $e_\nu$ having different parity.
 \begin{proposition}\label{gmax}
 Let $g$ be as in \eqref{gp}. It holds
  $$g(p,\pm \pi)=g'(p,\pm \pi)=0 \quad \text{ and } \quad g(p,\pm a \pi)=g'(p,\pm a \pi)=0\,.$$
Furthermore, if $e_\lambda$ is even (resp., odd) and $e_\nu$ is odd (resp., even), then
 $$g(p,0)>0\,, \; g'(p,0)=0 \,\; \text{and} \,\; g''(p,0)<0\,, \quad \text{(resp., } g(p, 0) < 0 \,, \; g'(p, 0)=0 \,\; \text{and} \,\; g''(p, 0) > 0 \text{)},$$
 i.e., $g$ has a local maximum (resp., minimum) at $x=0$.
 \end{proposition}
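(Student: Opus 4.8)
The plan is to set $\mu:=\nu(p)/\lambda(p)$, so that $g(p,x)=\mu\,(e_\lambda^2(x)-e_\nu^2(x))$ with $\mu$ a \emph{positive constant}: indeed, testing \eqref{beamweak} with $v=e$ gives $\int_I (e'')^2=\lambda\int_I p\,e^2$, and since for $e\not\equiv0$ the condition $e(\pm\pi)=0$ forces $\int_I(e'')^2>0$, while $\int_I p\,e^2>0$ because $p\ge\alpha>0$, every eigenvalue is strictly positive and hence $\mu>0$. By Proposition \ref{regolarita} both $e_\lambda$ and $e_\nu$ belong to $C^2(\overline I)$, so $g(p,\cdot)\in C^2(\overline I)$ and I may differentiate termwise, obtaining
\begin{equation*}
g'(p,x)=2\mu\big(e_\lambda e_\lambda'-e_\nu e_\nu'\big),\qquad g''(p,x)=2\mu\big((e_\lambda')^2+e_\lambda e_\lambda''-(e_\nu')^2-e_\nu e_\nu''\big).
\end{equation*}

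For the assertions at $\pm\pi$ and $\pm a\pi$ I would simply invoke the hinged conditions $e_\lambda(\pm\pi)=e_\nu(\pm\pi)=0$ and $e_\lambda(\pm a\pi)=e_\nu(\pm a\pi)=0$ from \eqref{beam0}. These give $g(p,\pm\pi)=g(p,\pm a\pi)=0$ at once; moreover, in the expression for $g'$ each of the two products $e_\lambda e_\lambda'$ and $e_\nu e_\nu'$ carries a factor vanishing at these four points, whence $g'(p,\pm\pi)=g'(p,\pm a\pi)=0$. This part is entirely routine.

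The heart of the statement is the behaviour at $x=0$, and here I would exploit the parity of the two eigenfunctions, which is well defined since $p$ is even and, by Proposition \ref{general facts}, the eigenvalues are simple. Assume $e_\lambda$ even and $e_\nu$ odd. Then $e_\lambda'(0)=0$ (the derivative of an even $C^1$ function is odd) and $e_\nu(0)=0$, so that in particular $e_\nu(0)e_\nu''(0)=0$. Substituting these vanishings into the formulas above gives immediately
\begin{equation*}
g(p,0)=\mu\,e_\lambda^2(0),\qquad g'(p,0)=0,\qquad g''(p,0)=2\mu\big(e_\lambda(0)e_\lambda''(0)-(e_\nu'(0))^2\big),
\end{equation*}
where $g'(p,0)=0$ follows from the fact that both $e_\lambda e_\lambda'$ and $e_\nu e_\nu'$ are odd. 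The odd/even case is symmetric, producing $g(p,0)=-\mu\,e_\nu^2(0)$ and $g''(p,0)=2\mu\big((e_\lambda'(0))^2-e_\nu(0)e_\nu''(0)\big)$.

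It remains to fix the signs, and this is where I expect the real work to lie. For $g(p,0)>0$ I must rule out $e_\lambda(0)=0$: if it vanished, evenness would also force $e_\lambda'(0)=e_\lambda'''(0)=0$, and evaluating \eqref{beam0} at the origin would give $e_\lambda''''(0)=\lambda p(0)e_\lambda(0)=0$; on the central span $I_0$ the even solution of $e''''=\lambda p\,e$ would then be a multiple of the solution with Cauchy data $(0,0,1,0)$ at $x=0$, and I would argue that the interior condition $e_\lambda(a\pi)=0$, together with the $C^2$-matching to $I_+$ and the conditions at $\pi$, cannot hold unless $e_\lambda''(0)=0$, i.e. $e_\lambda\equiv0$, a contradiction. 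For $g''(p,0)<0$ I would reduce matters to $e_\lambda(0)e_\lambda''(0)\le(e_\nu'(0))^2$ with strict inequality; since $(e_\nu'(0))^2\ge0$, it suffices to control the sign of $e_\lambda(0)e_\lambda''(0)$ (equivalently, to show that $e_\lambda^2$ is concave at the centre of the beam) and to note, again by Cauchy uniqueness, that $e_\lambda''(0)$ and $e_\nu'(0)$ cannot vanish simultaneously, which secures strictness. The delicate point, and the main obstacle, is precisely this sign control of $e_\lambda(0)e_\lambda''(0)$: it hinges on the nodal structure of the even mode on $I_0$ rather than on a purely algebraic identity, and making it rigorous for an \emph{arbitrary} even/odd pair, not only for the low modes displayed in Figure \ref{plotg}, is the step demanding genuine care. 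The odd/even case would then follow by the mirror-image argument.
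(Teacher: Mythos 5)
Your treatment of the routine parts is fine and coincides with the paper's: the vanishing of $g$ and $g'$ at $\pm\pi$ and $\pm a\pi$, the parity computations at $x=0$, and the reduction of the whole statement to the two sign facts $e_\lambda(0)\neq 0$ and $e_\lambda(0)e''_\lambda(0)<0$. But precisely at those two sign facts your proposal stops being a proof: for $e_\lambda(0)\neq 0$ you write that you ``would argue'' that the data $(0,0,1,0)$ at $x=0$ are incompatible with $e_\lambda(a\pi)=0$, without giving the mechanism, and for the sign of $e_\lambda(0)e''_\lambda(0)$ you explicitly concede that making it rigorous ``is the step demanding genuine care.'' That step is not a technicality — it is the actual content of the proposition, and the paper does supply the missing mechanism. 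It is the positivity-propagation argument of Lemma \ref{nehari}, reused at the centre of the beam: since $e_\lambda$ is even, $e'_\lambda(0)=e'''_\lambda(0)=0$, so the Cauchy data at $x=0$ have exactly the structure \eqref{ci} (translated from $-\pi$ to $0$). If $e_\lambda(0)=0$ and $e''_\lambda(0)\neq 0$, the integral representation $e''_\lambda(x)=e''_\lambda(0)+\lambda\int_0^x (x-t)p(t)e_\lambda(t)\,dt$ forces $e_\lambda$, $e'_\lambda$, $e''_\lambda$ to keep a common strict sign on $(0,a\pi]$, so $e_\lambda(a\pi)\neq 0$, contradicting the pier condition; hence $e_\lambda(0)>0$ up to a sign change. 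Then, if $e''_\lambda(0)\geq 0$, the same bootstrap again gives $e_\lambda(a\pi)>0$, another contradiction; hence $e''_\lambda(0)<0$. With both strict signs in hand, $g''(p,0)=2\frac{\nu}{\lambda}\big(e_\lambda(0)e''_\lambda(0)-(e'_\nu(0))^2\big)\leq 2\frac{\nu}{\lambda}\,e_\lambda(0)e''_\lambda(0)<0$ follows at once, with no information needed about $e'_\nu(0)$ — so your auxiliary claim about $e''_\lambda(0)$ and $e'_\nu(0)$ not vanishing simultaneously is superfluous (and, as stated, ill-posed: Cauchy uniqueness constrains the data of a single solution, while these are data of two different eigenfunctions).

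Two smaller inaccuracies in the same passage: evaluating the equation pointwise to get $e''''_\lambda(0)=\lambda p(0)e_\lambda(0)$ is not legitimate, since $p$ is only $L^\infty$ and Proposition \ref{regolarita} gives $e_\lambda\in C^3(\overline I_0)$, not $C^4$; and the conditions at $\pi$ together with the $C^2$-matching across the pier, which you invoke, play no role — the contradiction comes solely from $e_\lambda(a\pi)=0$ combined with the sign propagation on $(0,a\pi]$. So the architecture of your argument is the right one, but the load-bearing step is exactly the one you left open, and it is closed by the Nehari-type lemma the paper proved earlier for the simplicity of the eigenvalues.
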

Referring to formula \eq{ecritica}, it is clear that the most dangerous situation in terms of stability occurs when $\lambda$ and $\nu$ are as close as possible, i.e., when $\lambda$ and $\nu$ are consecutive eigenvalues; we conjecture that this necessarily implies that the associated eigenfunctions $e_\lambda$ and $e_\nu$ have different parity, whence Proposition \ref{gmax} applies. In order to reach a theoretical proof of this fact, one should probably exploit some general properties of the eigenvalues or, in some particular cases (as the one of piecewise constant densities), one could try to proceed ``by hand'' as in \cite{gazgar}. In any case, there is a strong numerical evidence supporting this conjecture (see Sections \ref{2step} and \ref{num}, in particular Figure \ref{autovalori}).

 \begin{remark}
\textnormal{The qualitative properties of the function $g$ defined in \eqref{gp} vary according to the weight, to the chosen eigenvalues and to the values of the parameters $a$, $\alpha$ and $\beta$; thus, it seems difficult to analytically deduce general information about $g$. Under the assumptions of Proposition \ref{gmax}, if we also know that for the chosen weight $p$ the eigenfunction $e_\lambda$ has a global maximum at $x=0$, then 
	$$g(p,x)\leq \frac{\nu(p)}{\lambda(p)}\, e_{\lambda}^2(x)\leq  \frac{\nu(p)}{\lambda(p)}\, e_{\lambda}^2(0)= g(p,0) \quad \text{for all } x \in I\,,$$
	hence $g(p,x)$ has a global maximum at $x=0$, see Figure \ref{plotg} on the left. However, there may also be cases in which the maximum at $0$ is only local (Figure \ref{plotg} on the right) or, exchanging the parity of $e_\lambda$ and $e_\nu$, the critical point of $g$ at $x=0$ is a minimum. %In this situation, Theorem \ref{thm-rapp} suggests to locate the denser material $\beta$ near $x=0$, i.e. in the middle of the beam, see Figure \ref{plotg} on the left.  %This may be the case also in some cases for which the function $g$ does not have a global minimum at $x=0$, as in the middle picture of Figure \ref{plotg}. 
Anyway, if the weight $p$ is the maximizer provided by Theorem \ref{thm-rapp}, i.e., $p=\widehat{p}$, the shape of $g(\widehat{p}, x)$ suggests where one should place the heavier and the lighter material along the beam. In Section \ref{num}, we will actually proceed iteratively, computing the function $g$ for several different weights, in order to provide suggestions about possible optimal locations of the materials in terms of stability.  
}
	\end{remark}

\section{Two-step piecewise constant densities}\label{2step}

Neglecting case $(ii)$ which never occurs in our numerics, Theorem \ref{thm-rapp} states that the maximizers for the ratio of two eigenvalues are piecewise constant densities; %the comments at the end of the previous section may suggest that the heavier density has to be present in the middle of the beam. 
in this section, we thus start by considering such a kind of weights, focusing for the moment on the simpler case of a density taking only two values and having exactly two symmetric jumps. We will see that this allows a quite explicit characterization of the instability regions. We will always denote by $\alpha$ the density of the lighter material and by $\beta$ the density of the heavier one (being, as in the previous sections, $\alpha < 1 < \beta$).

\subsection{Explicit computation of the eigenvalues}
Let $\alpha, \beta$ be fixed with $\alpha < 1 < \beta$ and let $p \in P_{\alpha, \beta}$ be such that, for every $x \in I$, it holds $p(x)=\alpha$ or $p(x)=\beta$. Assuming that $p$ has only two symmetric discontinuities, the mass constraint on $p$ implies that such jumps will occur at the points $x=\pm \rho \pi$, where $\rho$ is defined by
\begin{equation}\label{ics0}
\rho:=\frac{1-\alpha}{\beta-\alpha}\, ,\quad \textrm{if } p(0)=\beta, \qquad \textrm{or} \qquad \rho:= \frac{\beta-1}{\beta-\alpha}\,, \quad \textrm{if } p(0)= \alpha.
\end{equation}
With such a definition of $\rho$, we are thus considering a density $p$ having the form 
\begin{equation}\label{defp2}
p(x)=\beta \chi_{[-\rho \pi, \rho \pi]}(x) + \alpha \chi_{(0, \pi) \setminus [-\rho \pi, \rho \pi]} (x), \quad \textrm{if } p(0)=\beta, 
\end{equation}
or
\begin{equation}\label{defp22}
p(x)=\alpha \chi_{[-\rho \pi, \rho \pi]}(x) + \beta \chi_{(0, \pi) \setminus [-\rho \pi, \rho \pi]} (x), \quad \textrm{if } p(0)= \alpha.
\end{equation}
We now determine explicitly the eigenvalues of problem \eqref{beam0} for these simplified cases; by the parity of $p$, we can reason only on the right-half $[0, \pi]$ of the beam, which is divided into the three subintervals $I_1, I_2, I_3$ represented in Figure \ref{figbeam}. 
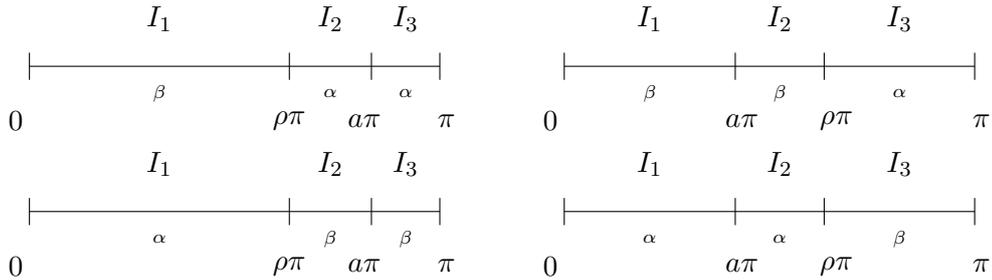
\begin{figure}[!ht]
\begin{center}
%\makebox[\linewidth][c]{
\begin{tikzpicture}[xscale=0.9,yscale=0.9]
\draw (0,0) -- (6,0);
\draw (0, -0.2) -- (0, 0.2);
\draw (3.8, -0.2) -- (3.8, 0.2);
\draw (5, -0.2) -- (5, 0.2);
\draw (6, -0.2) -- (6, 0.2);
\node at (-0.2, -0.8){$0$};
\node at (3.8, -0.8){$\rho \pi$};
\node at (4.9, -0.8){$a\pi$};
\node at (6.1, -0.8){$\pi$};
\node at (1.9, -0.4){\tiny{$\beta$}};
\node at (4.4, -0.4){\tiny{$\alpha$}};
\node at (5.5, -0.4){\tiny{$\alpha$}};
\node at (1.9, 0.7){$I_1$};
\node at (4.4, 0.7){$I_2$};
\node at (5.5, 0.7){$I_3$};
\end{tikzpicture}
\qquad 
\begin{tikzpicture}[xscale=0.9,yscale=0.9]
\draw (0,0) -- (6,0);
\draw (0, -0.2) -- (0, 0.2);
\draw (3.8, -0.2) -- (3.8, 0.2);
\draw (2.5, -0.2) -- (2.5, 0.2);
\draw (6, -0.2) -- (6, 0.2);
\node at (-0.2, -0.8){$0$};
\node at (3.98, -0.8){$\rho \pi$};
\node at (2.6, -0.8){$a\pi$};
\node at (6.1, -0.8){$\pi$};
\node at (3.15, -0.4){\tiny{$\beta$}};
\node at (1.25, -0.4){\tiny{$\beta$}};
\node at (4.9, -0.4){\tiny{$\alpha$}};
\node at (3.15, 0.7){$I_2$};
\node at (1.25, 0.7){$I_1$};
\node at (4.9, 0.7){$I_3$};
\end{tikzpicture}
\quad 
\begin{tikzpicture}[xscale=0.9,yscale=0.9]
\draw (0,0) -- (6,0);
\draw (0, -0.2) -- (0, 0.2);
\draw (3.8, -0.2) -- (3.8, 0.2);
\draw (5, -0.2) -- (5, 0.2);
\draw (6, -0.2) -- (6, 0.2);
\node at (-0.2, -0.8){$0$};
\node at (3.8, -0.8){$\rho \pi$};
\node at (4.9, -0.8){$a\pi$};
\node at (6.1, -0.8){$\pi$};
\node at (1.9, -0.4){\tiny{$\alpha$}};
\node at (4.4, -0.4){\tiny{$\beta$}};
\node at (5.5, -0.4){\tiny{$\beta$}};
\node at (1.9, 0.7){$I_1$};
\node at (4.4, 0.7){$I_2$};
\node at (5.5, 0.7){$I_3$};
\end{tikzpicture}
\qquad 
\begin{tikzpicture}[xscale=0.9,yscale=0.9]
\draw (0,0) -- (6,0);
\draw (0, -0.2) -- (0, 0.2);
\draw (3.8, -0.2) -- (3.8, 0.2);
\draw (2.5, -0.2) -- (2.5, 0.2);
\draw (6, -0.2) -- (6, 0.2);
\node at (-0.2, -0.8){$0$};
\node at (3.98, -0.8){$\rho \pi$};
\node at (2.6, -0.8){$a\pi$};
\node at (6.1, -0.8){$\pi$};
\node at (3.15, -0.4){\tiny{$\alpha$}};
\node at (1.25, -0.4){\tiny{$\alpha$}};
\node at (4.9, -0.4){\tiny{$\beta$}};
\node at (3.15, 0.7){$I_2$};
\node at (1.25, 0.7){$I_1$};
\node at (4.9, 0.7){$I_3$};
\end{tikzpicture}
\caption{A visual description of an inhomogeneous half-beam with constant densities $\alpha < \beta$ and a pier placed at the point $a\pi$, $a < 1$, according to whether, on the one hand, the denser (top) or the lighter (bottom) material is placed in the middle of the beam and, on the other hand, $a > \rho$ (left) or $a < \rho$ (right).}\label{figbeam}
\end{center}
\end{figure}
%hence to investigate more in details (at least numerically) the linear stability for bi-modal solutions of \eqref{beamEV}.  

We first observe that, proceeding as in the proof of Proposition \ref{regolarita}, we are here able to state that the eigenfunctions $e_k$ belong to $C^4(I_l)$, $l=1, 2, 3$, and they glue with $C^3$ regularity in correspondence of $\pm\rho \pi$ and with $C^2$ regularity in correspondence of the piers (unless $\rho=a$, in which case they simply glue with $C^2$ regularity in correspondence of the piers). Consequently, we can seek the eigenfunctions of \eqref{beam0} 
in the form 
\begin{equation}\label{forma}
e_{\lambda}(x)=\left\{
\begin{array}{ll}
A_1 \cos(\lambda x)+B_1 \sin(\lambda x) + C_1 \cosh(\lambda x) + D_1 \sinh(\lambda x) & x \in I_1 \\ 
A_2 \cos(\lambda x)+B_2 \sin(\lambda x) + C_2 \cosh(\lambda x) + D_2 \sinh(\lambda x) & x \in I_2 \\ 
A_3 \cos(\lambda x)+B_3 \sin(\lambda x) + C_3 \cosh(\lambda x) + D_3 \sinh(\lambda x) & x \in I_3, 
\end{array}
\right.
\end{equation}
imposing that they are strong solutions of the differential equation in \eqref{beam0} on $I_l$, $l=1, 2, 3$, and that the gluings at the endpoints of $I_1, I_2, I_3$ fulfill the described regularity. Of course, the expression of $e_\lambda$ in \eqref{forma} is then extended by even or odd symmetry to the whole $I$.
This procedure leads to the following result. 
\begin{proposition}\label{autoesp}
Let $\alpha, \beta$ be fixed, with $\alpha < 1 < \beta$, and let $p$ be as in \eqref{defp2} or in \eqref{defp22}, with $\rho$ defined as in \eqref{ics0}. Moreover, let
\begin{equation*}
%\label{definizioni}
\left\{
\begin{array}{llll}
 \sigma=\alpha^{1/4} & \text{and} & \tau=\beta^{1/4} & \text{ if } p(0)=\beta \vspace{0.1cm}\\
 \sigma=\beta^{1/4} & \text{and} & \tau=\alpha^{1/4} & \text{ if } p(0)=\alpha, 
\end{array}
\right.
\end{equation*}
and set 
$$
\delta=\frac{\tau}{\sigma}.
$$
Then, the eigenvalues of \eqref{beam0} are given by $\lambda=\mu^4$, where $\mu$ is implicitly determined as follows:
\smallbreak
\noindent
- for \underline{even eigenfunctions}, 
\begin{itemize}
\item if $\rho=a$, $\mu$ is a solution of
{\footnotesize
$$
\delta\cos[\mu\pi\sigma(1-\rho)]\sinh[\mu\pi\sigma(1-\rho)] = \sin[\mu\pi\sigma(1-\rho)](\delta \cosh[\mu\pi\sigma(1-\rho)]-  \sinh[\mu\pi\sigma(1-\rho)](\tan(\mu\pi\tau \rho)+\tanh[\mu\pi\tau \rho]));
$$
}\item if $\rho > a$, $\mu$ is a solution of $\textnormal{det}_{e, +}(\mu)=0$, where $\textnormal{det}_{e, +}$ is the determinant associated with the linear system 
{\footnotesize
\begin{equation*}
%\label{sistemae+}
\left\{
\begin{array}{l}
\blacklozenge \; \cos^2(\mu \tau a \pi)\sinh(\mu \tau a \pi) X_1 + [\cosh(\mu \tau a \pi)+\sin(\mu \tau a \pi)\cos(\mu \tau a \pi)\sinh(\mu \tau a \pi)] X_2 + \\ 
\cos(\mu \tau a \pi)\sinh(\mu \tau a \pi)\cosh(\mu \tau a \pi) X_3+ \cos(\mu \tau a \pi) \cosh^2(\mu \tau a \pi)X_4 = 0 \vspace{0.1cm} \\
\blacklozenge \; \cos(\mu \tau a \pi) X_1 +  \sin(\mu \tau a \pi) X_2 + \cosh(\mu \tau a \pi) X_3 + \sinh(\mu \tau a \pi) X_4 = 0 \vspace{0.1cm} \\
\blacklozenge \; (1-\delta^2) [\cos(\mu \tau \rho \pi)\cosh(\mu \sigma (\rho-1) \pi)+\delta \sin(\mu \tau \rho \pi)\sinh(\mu \sigma (\rho-1) \pi)] X_1 + \\
(1-\delta^2) [\sin(\mu \tau \rho \pi)\cosh(\mu \sigma (\rho-1) \pi)-\delta \cos(\mu
 \tau \rho \pi)\sinh(\mu \sigma (\rho-1) \pi)] X_2 +\\ 
(1+\delta^2) [\cosh(\mu \tau \rho \pi)\cosh(\mu \sigma (\rho-1) \pi)-\delta \sinh(\mu \tau \rho \pi)\sinh(\mu \sigma (\rho-1) \pi)] X_3 + \\
(1+\delta^2) [\sinh(\mu \tau \rho \pi)\cosh(\mu \sigma (\rho-1) \pi)-\delta \cosh(\mu \tau \rho \pi)\sinh(\mu \sigma (\rho-1) \pi)] X_4 =0  \vspace{0.1cm}\\ 
\blacklozenge \; (1+\delta^2) [\cos(\mu \tau \rho \pi)\cos(\mu \sigma (\rho-1) \pi)+\delta \sin(\mu \tau \rho \pi)\sin(\mu \sigma (\rho-1) \pi)] X_1 + \\
(1+\delta^2) [\sin(\mu
 \tau \rho \pi)\cos(\mu \sigma (\rho-1) \pi)-\delta \cos(\mu \tau \rho \pi)\sin(\mu \sigma (\rho-1) \pi)] X_2 +\\ 
(1-\delta^2) [\cosh(\mu \tau \rho \pi)\cos(\mu \sigma (\rho-1) \pi)-\delta \sinh(\mu \tau \rho \pi)\sin(\mu \sigma (\rho-1) \pi)] X_3 + \\
(1-\delta^2) [\sinh(\mu \tau \rho \pi)\cos(\mu \sigma (\rho-1) \pi)-\delta \cosh(\mu \tau \rho \pi)\sin(\mu \sigma (\rho-1) \pi)] X_4 =0; 
\end{array}
\right.
\end{equation*}
}\item if $\rho < a$, $\mu$ is a solution of $\textnormal{det}_{e, -}(\mu)=0$, where $\textnormal{det}_{e, -}$ is the determinant associated with the linear system 
{\footnotesize
\begin{equation*}
%\label{sistemae-}
\left\{
\begin{array}{l}
\blacklozenge \; \cos(\mu \sigma a \pi) X_1 + \sin(\mu \sigma a \pi) X_2 + \cosh(\mu \sigma a \pi) X_3 + \sinh(\mu \sigma a \pi) X_4 = 0 \vspace{0.1cm} \\
\blacklozenge \{\cos(\mu \sigma a \pi)\cos[\mu\sigma(a-1)\pi]\sinh[\mu \sigma (a-1) \pi] - \cos(\mu \sigma a\pi)\cosh[\mu\sigma(a-1)\pi]\sin[\mu \sigma (a-1) \pi] + \\ 
\sin(\mu\sigma a\pi)\sin[\mu \sigma(a-1)\pi]\sinh[\mu\sigma(a-1)\pi]\} X_1 + 
\{\sin(\mu \sigma a \pi)\cos[\mu\sigma(a-1)\pi]\sinh[\mu \sigma (a-1) \pi] - \\ \sin(\mu \sigma a\pi)\cosh[\mu\sigma(a-1)\pi]\sin[\mu \sigma (a-1) \pi] - 
\cos(\mu\sigma a\pi)\sin[\mu \sigma(a-1)\pi]\sinh[\mu\sigma(a-1)\pi]\}
X_2 -\\
\sinh(\mu\sigma a \pi) \sin[\mu \sigma(a-1)\pi] \sinh[\mu \sigma(a-1)\pi] X_3 -\cosh(\mu\sigma a\pi)\sin[\mu \sigma(a-1)\pi]\sinh[\mu \sigma(a-1)\pi] X_4=0 \vspace{0.1cm}\\
\blacklozenge \; (1-\delta^2) [\sin(\mu \sigma \rho \pi)\cosh(\mu \tau \rho \pi)+\delta \cos(\mu \sigma \rho \pi)\sinh(\mu \tau \rho \pi)] X_1 - \\
(1-\delta^2) [\cos(\mu \sigma \rho \pi)\cosh(\mu \tau \rho \pi)-\delta \sin(\mu \sigma \rho \pi)\sinh(\mu \tau \rho \pi)] X_2 + \\
(1+\delta^2) [\sinh(\mu \sigma \rho \pi)\cosh(\mu \tau \rho \pi)-\delta \cosh(\mu \sigma \rho \pi)\sinh(\mu \tau \rho \pi)] X_3 + \\
(1+\delta^2) [\cosh(\mu \sigma \rho \pi)\cosh(\mu \tau \rho \pi)-\delta \sinh(\mu \sigma \rho \pi)\sinh(\mu \tau \rho \pi)] X_4 =0  \vspace{0.1cm}\\ 
\blacklozenge \; (1+\delta^2) [\sin(\mu \sigma \rho \pi)\cos(\mu \tau \rho \pi)-\delta \cos(\mu \sigma \rho \pi)\sin(\mu \tau \rho \pi)] X_1 - \\
(1+\delta^2) [\cos(\mu \sigma \rho \pi)\cos(\mu \tau \rho \pi)+\delta \sin(\mu \sigma \rho \pi)\sin(\mu \tau \rho \pi)] X_2 +\\ 
(1-\delta^2) [\sinh(\mu \sigma \rho \pi)\cos(\mu \tau \rho \pi)+\delta \cosh(\mu \sigma \rho \pi)\sin(\mu \tau \rho \pi)] X_3 + \\
(1-\delta^2) [\cosh(\mu \sigma \rho \pi)\cos(\mu \tau \rho \pi)+\delta \sinh(\mu \sigma \rho \pi)\sin(\mu \tau \rho \pi)] X_4 =0;
\end{array}
\right.
\end{equation*}
}\end{itemize}
\smallbreak
\noindent
- for \underline{odd eigenfunctions}, 
\begin{itemize}
\item if $\rho=a$, $\mu$ is a solution of
{\footnotesize
$$
\delta\cosh[\mu\pi\sigma(1-\rho)]\sin[\mu\pi\sigma(1-\rho)] = \sinh[\mu\pi\sigma(1-\rho)](\delta \cos[\mu\pi\sigma(1-\rho)]-  \sin[\mu\pi\sigma(1-\rho)](\cotan(\mu\pi\tau \rho)+\cotanh[\mu\pi\tau \rho]));
$$
}
\item if $\rho > a$, $\mu$ is a solution of $\textnormal{det}_{o, +}(\mu)=0$, where $\textnormal{det}_{o, +}$ is the determinant associated with the linear system 
{\footnotesize
\begin{equation*}
%\label{sistemao+}
\left\{
\begin{array}{l}
\blacklozenge \; [\sinh(\mu \tau a \pi)-\sin(\mu \tau a \pi)\cos(\mu \tau a \pi)\cosh(\mu \tau a \pi)] X_1 -\sin^2(\mu \tau a \pi)\cosh(\mu \tau a \pi) X_2 - \\ 
\sin(\mu \tau a \pi)\sinh^2(\mu \tau a \pi) X_3- \sin(\mu \tau a \pi) \sinh(\mu \tau a \pi)\cosh(\mu \tau a \pi)X_4 = 0 \vspace{0.1cm} \\
\blacklozenge \; \cos(\mu \tau a \pi) X_1 +  \sin(\mu \tau a \pi) X_2 + \cosh(\mu \tau a \pi) X_3 + \sinh(\mu \tau a \pi) X_4 = 0 \vspace{0.1cm} \\
\blacklozenge \; (1-\delta^2) [\cos(\mu \tau \rho \pi)\cosh(\mu \sigma (\rho-1) \pi)+\delta \sin(\mu \tau \rho \pi)\sinh(\mu \sigma (\rho-1) \pi)] X_1 + \\
(1-\delta^2) [\sin(\mu \tau \rho \pi)\cosh(\mu \sigma (\rho-1) \pi)-\delta \cos(\mu \tau \rho \pi)\sinh(\mu \sigma (\rho-1) \pi)] X_2 +\\ 
(1+\delta^2) [\cosh(\mu \tau \rho \pi)\cosh(\mu \sigma (\rho-1) \pi)-\delta \sinh(\mu \tau \rho \pi)\sinh(\mu \sigma (\rho-1) \pi)] X_3 + \\
(1+\delta^2) [\sinh(\mu \tau \rho \pi)\cosh(\mu \sigma (\rho-1) \pi)-\delta \cosh(\mu \tau \rho \pi)\sinh(\mu \sigma (\rho-1) \pi)] X_4 =0  \vspace{0.1cm}\\ 
\blacklozenge \; (1+\delta^2) [\cos(\mu \tau \rho \pi)\cos(\mu \sigma (\rho-1) \pi)+\delta \sin(\mu \tau \rho \pi)\sin(\mu \sigma (\rho-1) \pi)] X_1 + \\
(1+\delta^2) [\sin(\mu \tau \rho \pi)\cos(\mu \sigma (\rho-1) \pi)-\delta \cos(\mu \tau \rho \pi)\sin(\mu \sigma (\rho-1) \pi)] X_2 +\\ 
(1-\delta^2) [\cosh(\mu \tau \rho \pi)\cos(\mu \sigma (\rho-1) \pi)-\delta \sinh(\mu \tau \rho \pi)\sin(\mu \sigma (\rho-1) \pi)] X_3 + \\
(1-\delta^2) [\sinh(\mu \tau \rho \pi)\cos(\mu \sigma (\rho-1) \pi)-\delta \cosh(\mu \tau \rho \pi)\sin(\mu \sigma (\rho-1) \pi)] X_4 =0; 
\end{array}
\right.
\end{equation*}
}\item if $\rho < a$, $\mu$ is a solution of $\textnormal{det}_{o, -}(\mu)=0$, where $\textnormal{det}_{o, -}$ is the determinant associated with the linear system 
{\footnotesize
\begin{equation*}
%\label{sistemao-}
\left\{
\begin{array}{l}
\blacklozenge \; \cos(\mu \sigma a \pi) X_1 + \sin(\mu \sigma a \pi) X_2 + \cosh(\mu
 \sigma a \pi) X_3 + \sinh(\mu \sigma a \pi) X_4 = 0 \vspace{0.1cm} \\
\blacklozenge \{\cos(\mu \sigma a \pi)\cos[\mu\sigma(a-1)\pi]\sinh[\mu \sigma (a-1) \pi] - \cos(\mu \sigma a\pi)\cosh[\mu\sigma(a-1)\pi]\sin[\mu \sigma (a-1) \pi] + \\ 
\sin(\mu\sigma a\pi)\sin[\mu \sigma(a-1)\pi]\sinh[\mu\sigma(a-1)\pi]\} X_1 + 
\{\sin(\mu \sigma a \pi)\cos[\mu\sigma(a-1)\pi]\sinh[\mu \sigma (a-1) \pi] - \\ \sin(\mu \sigma a\pi)\cosh[\mu\sigma(a-1)\pi]\sin[\mu \sigma (a-1) \pi] - 
\cos(\mu\sigma a\pi)\sin[\mu \sigma(a-1)\pi]\sinh[\mu\sigma(a-1)\pi]\}
X_2 -\\
\sinh(\mu\sigma a \pi) \sin[\mu \sigma(a-1)\pi] \sinh[\mu \sigma(a-1)\pi] X_3 -\cosh(\mu\sigma a\pi)\sin[\mu \sigma(a-1)\pi]\sinh[\mu \sigma(a-1)\pi] X_4=0 \vspace{0.1cm}\\
\blacklozenge \; (1-\delta^2) [\sin(\mu \sigma \rho \pi)\sinh(\mu \tau \rho \pi)+\delta \cos(\mu \sigma \rho \pi)\cosh(\mu \tau \rho \pi)] X_1 - \\
(1-\delta^2) [\cos(\mu \sigma \rho \pi)\sinh(\mu \tau \rho \pi)-\delta \sin(\mu \sigma \rho \pi)\cosh(\mu \tau \rho \pi)] X_2 + \\
(1+\delta^2) [\sinh(\mu \sigma \rho \pi)\sinh(\mu \tau \rho \pi)-\delta \cosh(\mu \sigma \rho \pi)\cosh(\mu \tau \rho \pi)] X_3 + \\
(1+\delta^2) [\cosh(\mu \sigma \rho \pi)\sinh(\mu \tau \rho \pi)-\delta \sinh(\mu \sigma \rho \pi)\cosh(\mu \tau \rho \pi)] X_4 =0  \vspace{0.1cm}\\ 
\blacklozenge \; (1+\delta^2) [-\sin(\mu \sigma \rho \pi)\sin(\mu \tau \rho \pi)-\delta \cos(\mu \sigma \rho \pi)\cos(\mu \tau \rho \pi)] X_1 + \\
(1+\delta^2) [\cos(\mu \sigma \rho \pi)\sin(\mu \tau \rho \pi)-\delta \sin(\mu \sigma \rho \pi)\cos(\mu \tau \rho \pi)] X_2 +\\ 
(1-\delta^2) [-\sinh(\mu \sigma \rho \pi)\sin(\mu \tau \rho \pi)+\delta \cosh(\mu \sigma \rho \pi)\cos(\mu \tau \rho \pi)] X_3 + \\
(1-\delta^2) [-\cosh(\mu \sigma \rho \pi)\sin(\mu \tau \rho \pi)+\delta \sinh(\mu \sigma \rho \pi)\cos(\mu \tau \rho \pi)] X_4 =0.
\end{array}
\right.
\end{equation*}
}\end{itemize}
\end{proposition}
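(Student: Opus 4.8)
The plan is to reduce problem \eqref{beam0} to a linear algebraic eigenvalue condition, exploiting that the density is piecewise constant. On each subinterval $I_l$ where $p$ equals a constant $c\in\{\alpha,\beta\}$, the equation $e''''=\lambda p\,e$ with $\lambda=\mu^4$ becomes the constant-coefficient ODE $e''''=\mu^4 c\,e$, whose characteristic roots are $\pm\mu c^{1/4}$ and $\pm i\mu c^{1/4}$; hence the general solution on $I_l$ is a linear combination of $\cos(\mu c^{1/4}x)$, $\sin(\mu c^{1/4}x)$, $\cosh(\mu c^{1/4}x)$, $\sinh(\mu c^{1/4}x)$, i.e.\ of the four functions in \eqref{forma} with spatial frequency $\mu\sigma$ or $\mu\tau$ according to whether $c$ is the outer or the central density. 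By the parity of $p$ I would restrict to the half-beam $[0,\pi]$ and split into even and odd eigenfunctions: imposing $e'(0)=e'''(0)=0$ in the even case forces the $\sin$ and $\sinh$ coefficients on the central interval to vanish, while $e(0)=e''(0)=0$ in the odd case kills the $\cos$ and $\cosh$ coefficients.

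Next I would assemble the matching conditions. The regularity statement preceding the proposition, obtained exactly as in the proof of Proposition \ref{regolarita}, tells me that at the density jump $\pm\rho\pi$ the eigenfunction glues with $C^3$ regularity (continuity of $e,e',e'',e'''$), whereas at the pier $\pm a\pi$ it glues only with $C^2$ regularity and in addition vanishes, so that there $e=0$, $e'$ and $e''$ are continuous and $e'''$ is allowed to jump (the reaction of the support). Writing the general solutions on $I_1,I_2,I_3$ and imposing the symmetry condition at $0$, the four interface conditions at the pier, the four $C^3$ conditions at the density jump and the two hinged conditions $e(\pi)=e''(\pi)=0$, I obtain a homogeneous linear system whose number of equations matches the number of free coefficients. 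A nontrivial solution, i.e.\ a genuine eigenfunction, exists if and only if the associated determinant vanishes, and this is precisely the characteristic equation for $\mu$. By substituting the conditions at the endpoint and at the interior nodes I would eliminate all but four of the coefficients, taking as residual unknowns $X_1,X_2,X_3,X_4$ the coefficients of the interval adjacent to the pier, so that one of the displayed equations is exactly $e(a\pi)=0$, thereby collapsing the condition to the vanishing of a $4\times 4$ determinant $\det_{\bullet,\pm}$.

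The case analysis then reflects the relative position of the pier and of the density jump. For $\rho>a$ the central material occupies $[0,\rho\pi]\supset[0,a\pi]$, so the pier lies in the $\tau$-region and the matching at $\rho\pi$ links a $\tau$-solution to a $\sigma$-solution; for $\rho<a$ the jump precedes the pier and the roles of $\sigma$ and $\tau$ at the two nodes are interchanged, which is what distinguishes $\det_{\bullet,+}$ from $\det_{\bullet,-}$. In the degenerate case $\rho=a$ the jump and the pier coincide: only two subintervals survive, only $C^2$ gluing is available at the common node, and the $4\times 4$ determinant collapses to the single scalar relation involving $\tan,\tanh$ (even case) or $\cotan,\cotanh$ (odd case) displayed in the statement. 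Each of the six configurations (three geometries times two parities) is treated by the same mechanical elimination, only with different entries.

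The conceptual steps above are routine; the real labour, and the place where errors are likely, is the symbolic elimination producing the explicit simplified entries of $\det_{e,\pm}$ and $\det_{o,\pm}$, together with the careful bookkeeping of which density and which node conditions apply on each interval in each of the six sub-cases. I would carry out these reductions with the aid of computer algebra and validate them by two independent consistency checks: setting $\alpha=\beta$ (equivalently $\delta=1$) must reproduce the known spectrum of the homogeneous multiply hinged beam, and letting $\rho\to a$ in the $\rho\gtrless a$ determinants must recover, in the limit, the scalar equations of the degenerate case $\rho=a$.
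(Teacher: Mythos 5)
Your proposal follows essentially the same route as the paper: write the eigenfunction piecewise in trigonometric/hyperbolic form with frequencies $\mu\sigma$ and $\mu\tau$, use the parity of $p$ to reduce to the half-beam, impose the hinged conditions at $\pi$, the pier conditions ($C^2$ gluing plus vanishing) at $a\pi$ and the $C^3$ gluing at the density jump $\rho\pi$ to obtain a homogeneous $10\times 10$ system, then eliminate coefficients down to the $4\times 4$ determinant conditions, with the case $\rho=a$ collapsing to the scalar $\tan$/$\tanh$ (resp. $\cotan$/$\cotanh$) relations. The paper's own proof is exactly this scheme and likewise omits the heavy symbolic simplification as ``a matter of explicit computations,'' so your deferral of that algebra to computer-assisted elimination with consistency checks matches its level of rigor.
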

The proof is essentially a matter of explicit computations: 
one starts from the $10 \times 10$ linear system satisfied by the coefficients $A_l, B_l, C_l, D_l$ appearing in \eqref{forma}  ($l=1, 2, 3$), obtained taking into account the parity of the eigenfunction, imposing the internal-boundary conditions in \eqref{beam0} and requiring the regularity stated in Proposition \ref{regolarita}. By direct computations, such a system is simplified until it takes the form appearing in Proposition \ref{autoesp}; further simplifications appear heavy to perform, as well as explicitly writing the deriving determinant. We omit further details, as well as the explicit expression of the eigenfunctions, which appears cumbersome; yet, the simplification to a $4\times4$ linear system is already enough to proceed with some experimental analyses without being affected by the possible numerical drawbacks appearing for a $10\times10$ matrix.  
\smallbreak

\subsection{Stability analysis: numerical results}
\label{stab2step}
We proceed as explained in Section \ref{critical energies}. We fix two eigenvalues $\lambda < \nu$ of \eqref{beam0} and we consider the solutions of \eqref{beamEV} of the form \eqref{form3} starting with potential initial data, measuring their instability through the notion of critical energy threshold given in \eq{ecritica}. As already remarked, formula \eq{ecritica} suggests that the most dangerous situation in terms of stability occurs for $\lambda=\lambda_j$ and $\nu=\lambda_{j+1}$, for some positive integer $j$, i.e. $\lambda$ and $\nu$ consecutive eigenvalues. Furthermore, since twelve modes are more than enough to approximate the motion of real bridges and of beams having similar structural responses, we restrict our attention to the first twelve modes (see, \cite[Section 4.3]{GarGazBK} for a detailed explanation). Actually, the following maximum problem naturally arises: we fix the two-step density distribution $p$ 
and we define, for every $a \in (0, 1)$, the corresponding energy threshold of linear instability as
\begin{equation}\label{sogliaen}
\mathcal{E}(a):=\, \min_{1 \leq j \leq 11} E\big(\lambda_j(a),\lambda_{j+1}(a)\big)\,,
\end{equation}
where $E$ is as given in  \eq{ecritica}; on varying $a \in (0, 1)$, we can then determine the best placement of the piers at fixed density, namely the one maximizing $\mathcal{E}(a)$. We then numerically compute the energy threshold of instability on varying $p$ and $a$. 
\medbreak
From the point of view of the applications, it is reasonable to assume that $\alpha \in \Sigma_1:=\{5/6, 2/3, 1/2, 1/3\}$ and $\beta \in \Sigma_2:=\{3/2, 2, 5/2, 3\}$; to give a rough idea, if $p \equiv 1$ is taken as corresponding to a beam made of reinforced concrete, we consider densities ranging from light materials such as some kinds of wood, to heavy materials such as steel (common materials in civil constructions). As for the choice of $a$, we take $a \in A:=\{0.10, 0.20, 0.30, 0.35, 0.40, 0.45, \ldots, 0.65, 0.70, 0.80, 0.90\}$; namely, we generally move $a$ with a step of $0.1$, but when taking into account ``physical values'' of $a$, that is, in a neighborhood of the 
``physical range'' 
\begin{equation}\label{physrangea}
\frac{1}{2} \leq a \leq \frac{2}{3}, 
\end{equation}
corresponding to the most frequent configurations in real structures \cite{podolny}, we refine the step in order to have a more complete overview.
We then study the problem
\begin{equation}\label{amax}
\mathcal{E}^*:=\max_{a\in A} \mathcal{E}(a)\,, 
\end{equation}
in dependence on the chosen densities (thus, $\mathcal{E}^*=\mathcal{E}^*(\alpha, \beta)$). 
In Table \ref{energia}, we take into account the case of \emph{heavier density in the middle of the beam} ($p(0)=\beta$), reporting the critical energy of instability $\mathcal{E}^*$, together with the corresponding ratio $R$ of eigenvalues achieving the minimum in \eq{sogliaen}, the optimal value $a=a_{\text{opt}}$ achieving the maximum in \eq{amax} and the value of $\rho$; in Table \ref{energia2}, we report the same data in the case of \emph{lighter density in the middle the beam} ($p(0)=\alpha$). In bold characters, we indicate the situations in which, in correspondence of the optimal configuration, the pier has to be reinforced, that is, around the pier we find the \emph{heavier material} (i.e., $a<\rho$ in Table \ref{energia} and $a>\rho$ in Table \ref{energia2}). Incidentally, notice that the critical energy thresholds $\mathcal{E}^*$ are always of the order of $10^2$, and for this reason we choose to report them using the normalized scientific notation (i.e., writing the value of $\mathcal{E}^* / 10^2$). 

\begin{table}[ht!]
\begin{center}
\resizebox{\textwidth}{!}{\footnotesize
\begin{tabular}{|c|c|c|c|c||c|c|c|c||c|c|c|c||c|c|c|c||}
\hline
$\!\alpha\downarrow$ $\beta \to\!$  & \multicolumn{4}{c||}{3/2} & \multicolumn{4}{c||}{2} & \multicolumn{4}{c||}{5/2} & \multicolumn{4}{c||}{3}  \\
\hline
  & $\mathcal{E}^* / 10^{2}$ & $R$ & $a_\text{opt}$ & $\rho$ & $\mathcal{E}^* / 10^{2}$ & $R$ & $a_\text{opt}$ & $\rho$   & $\mathcal{E}^* / 10^{2}$ & $R$ & $a_\text{opt}$ & $\rho$ & $\mathcal{E}^* / 10^{2}$ & $R$ & $a_\text{opt}$ & $\rho$ \\
\hline
$5/6$ & 2.15 & $\lambda_2/\lambda_1$ & 0.50  & 0.25 & 2.76 & $\lambda_2/\lambda_1$ & 0.50 & 0.14 & 3.04 & $\lambda_2/\lambda_1$ & 0.50  & 0.10 & 2.93 & $\lambda_3/\lambda_2$ & 0.50 & 0.08 \\
\hline
$2/3$ & 2.30 & $\lambda_2/\lambda_1$ & 0.45  & 0.40 & 2.37 & $\lambda_2/\lambda_1$ & 0.45 & 0.25 &2.57 & $\lambda_2/\lambda_1$ & 0.50  & 0.18 & 3.22 & $\lambda_2/\lambda_1$ & 0.50 & 0.14  \\
\hline
$1/2$  & 2.85&$\lambda_2/\lambda_1$& \textbf{0.45}& 0.50 & 2.67 &  $\lambda_3/\lambda_2$ & 0.40 & 0.33& 2.60&$\lambda_3/\lambda_2$& 0.40& 0.25 & 2.87 &  $\lambda_2/\lambda_1$ & 0.45 & 0.20  \\
\hline
$1/3$  & 3.41 &$\lambda_2/\lambda_1$& \textbf{0.40}& 0.57 & 3.77 &  $\lambda_2/\lambda_1$ & \textbf{0.40} & 0.40 & 3.20 &$\lambda_3/\lambda_2$& 0.35& 0.31 & 3.25 & $\lambda_3/\lambda_2$ & 0.35 & 0.25  \\
\hline
\end{tabular}
}
\smallbreak
\caption{Parameters of linear instability in the case of heavier density in the middle of the beam ($p(0)=\beta$).}\label{energia}
\end{center}
\end{table}

\begin{table}[ht!]
\begin{center}
\resizebox{\textwidth}{!}{\footnotesize
\begin{tabular}{|c|c|c|c|c||c|c|c|c||c|c|c|c||c|c|c|c||}
\hline
$\!\alpha\downarrow$ $\beta \to\!$  & \multicolumn{4}{c||}{3/2} & \multicolumn{4}{c||}{2} & \multicolumn{4}{c||}{5/2} & \multicolumn{4}{c||}{3}  \\
\hline
  & $\mathcal{E}^* / 10^{2}$ & $R$ & $a_\text{opt}$ & $\rho$ & $\mathcal{E}^* / 10^{2}$ & $R$ & $a_\text{opt}$ & $\rho$   & $\mathcal{E}^* / 10^{2}$ & $R$ & $a_\text{opt}$ & $\rho$ & $\mathcal{E}^* / 10^{2}$ & $R$ & $a_\text{opt}$ & $\rho$ \\
\hline
$5/6$ & 2.12 & $\lambda_2/\lambda_1$ & 0.55  & 0.75 & 2.36 & $\lambda_2/\lambda_1$ & 0.55 & 0.86 & 2.60 & $\lambda_3/\lambda_2$ & 0.50  & 0.90 & 2.92 & $\lambda_2/\lambda_1$ & 0.50 & 0.92 \\
\hline
$2/3$ & 2.19 & $\lambda_3/\lambda_2$ & 0.55  & 0.60 & 2.07 & $\lambda_2/\lambda_1$ & 0.60 & 0.75 &2.58 & $\lambda_3/\lambda_2$ & 0.55  & 0.82 & 3.11 & $\lambda_2/\lambda_1$ & 0.55 & 0.86  \\
\hline
$1/2$  & 3.06&$\lambda_2/\lambda_1$& \textbf{0.60} & 0.50 & 2.64 &  $\lambda_2/\lambda_1$ & 0.60 & 0.67& 2.61&$\lambda_3/\lambda_2$& 0.60& 0.75 & 2.93 &  $\lambda_2/\lambda_1$ & 0.60 & 0.80 \\
\hline
$1/3$  & 3.31 &$\lambda_2/\lambda_1$& \textbf{0.60}& 0.43 & 4.40 &  $\lambda_2/\lambda_1$ & \textbf{0.65} & 0.60 & 3.89 &$\lambda_2/\lambda_1$& 0.65& 0.69 & 3.70 &  $\lambda_2/\lambda_1$ & 0.65 & 0.75  \\
\hline
\end{tabular}
}
\smallbreak
\caption{Parameters of linear instability in the case of lighter density in the middle of the beam ($p(0)=\alpha$).}\label{energia2}
\end{center}
\end{table}

Taking into account that, in the homogeneous case $\alpha=\beta=1$, the critical energy threshold \eq{sogliaen} is equal to $2.17 \cdot 10^2$ \cite[Table 3.9]{GarGazBK}, reached by the ratio $\lambda_2/\lambda_1$, based the critical energy thresholds $\mathcal{E}^*$ collected in Tables \ref{energia} and \ref{energia2} we infer the following conclusions:
\begin{enumerate}[a)]
\item a non homogeneous beam is in general more stable than a homogeneous one, i.e., the critical energy thresholds are in general larger in the former case;
% \item in general, the higher is $\beta/\alpha$, the higher seem to be the corresponding energy thresholds of instability;
\item the optimal position of the piers ranges between $a=0.35$ and $a=0.50$ in the case $p(0)=\beta$ and between $a=0.55$ and $a=0.65$ in the case $p(0)=\alpha$;
\item fixing $\beta$, on decreasing of $\alpha$ the optimal position of the piers moves towards the middle of the beam if $p(0)=\beta$ and towards the endpoints if $p(0)=\alpha$, and the corresponding energy thresholds generally become larger;
\item in general, it is better to have $p(x)=\alpha$ for $x \approx a\pi$, i.e., around the pier there should be the lighter material, with some exceptions if $\alpha$ is sufficiently small;
\item if $\alpha$ is sufficiently small (i.e., $\alpha=1/3$ and $\alpha=1/2$), the beam tends to be stabler if $p(0)=\alpha$, rather than if $p(0)=\beta$;
\item the ratio of eigenvalues which is responsible for the critical energy threshold (i.e., the one corresponding to the eigenvalues achieving the minimum in \eq{sogliaen}) is generally $R=\lambda_2/\lambda_1$, up to some cases in which it is $R=\lambda_3/\lambda_2$ (see the comments at the end of the section); %It seems that subsequent couples of eigenvalue curves produce this effect with higher frequency on growing of $\beta$. 
\item both for $p$ as in \eq{defp2} and for $p$ as in \eq{defp22}, the best pairing of materials is $\alpha=1/3$ and $\beta=2$, namely
$$
\max_{(\alpha,\beta) \in \Sigma_1 \times \Sigma_2} \mathcal{E}^*(\alpha,\beta)=\mathcal{E}^*(1/3,2)\,.
$$ 
\end{enumerate}
In Figure \ref{autovalori}, for some choices of $\alpha$ and $\beta$ we depict the eigenvalue curves implicitly defined, in the $(a, \mu)$-plane, by the equalities $\textnormal{det}_{e, \pm}(\mu)=0$ and $\textnormal{det}_{o,\pm}(\mu)=0$ ($\mu(a)=\lambda^{1/4}(a)$, see Proposition \ref{autoesp}), the blue ones corresponding to even eigenfunctions and the orange ones to odd eigenfunctions. The qualitative properties of the eigenvalue curves are indeed important in the stability analysis; it seems that, the higher is $\beta$, the faster such curves change monotonicity (displaying faster oscillations between different critical points) for $a$ small. Comparing with \cite[Figure 9]{gazgar}, where the same picture is shown for $\alpha=1=\beta,$ it seems that, for fixed $\alpha$, the curves are ``compressed towards $a=0$'' and ``stretched towards $a=1$'' on growing of $\beta$. These deformations possibly make couples of consecutive eigenvalue curves other than $\{\mu=\lambda_1^{1/4}(a), \mu=\lambda_2^{1/4}(a)\}$ very close (for $a$ belonging to a certain range); the ratio between the corresponding eigenvalues may then turn to be small enough to become the one responsible for the loss of stability. With reference to the above observation f), the right picture in Figure \ref{autovalori} shows a case in which the second and the third eigenvalue curves have this behavior for $a=a_{\text{opt}}$ (red line), and indeed the ratio of eigenvalues corresponding to the instability threshold is here $\lambda_3/\lambda_2$, see Table \ref{energia}. Incidentally, notice that Figure \ref{autovalori} brings a strong evidence of the fact that odd and even eigenfunctions alternate, as claimed after Proposition \ref{gmax}. 
\begin{figure}
	\centering
{\includegraphics[width=5cm]{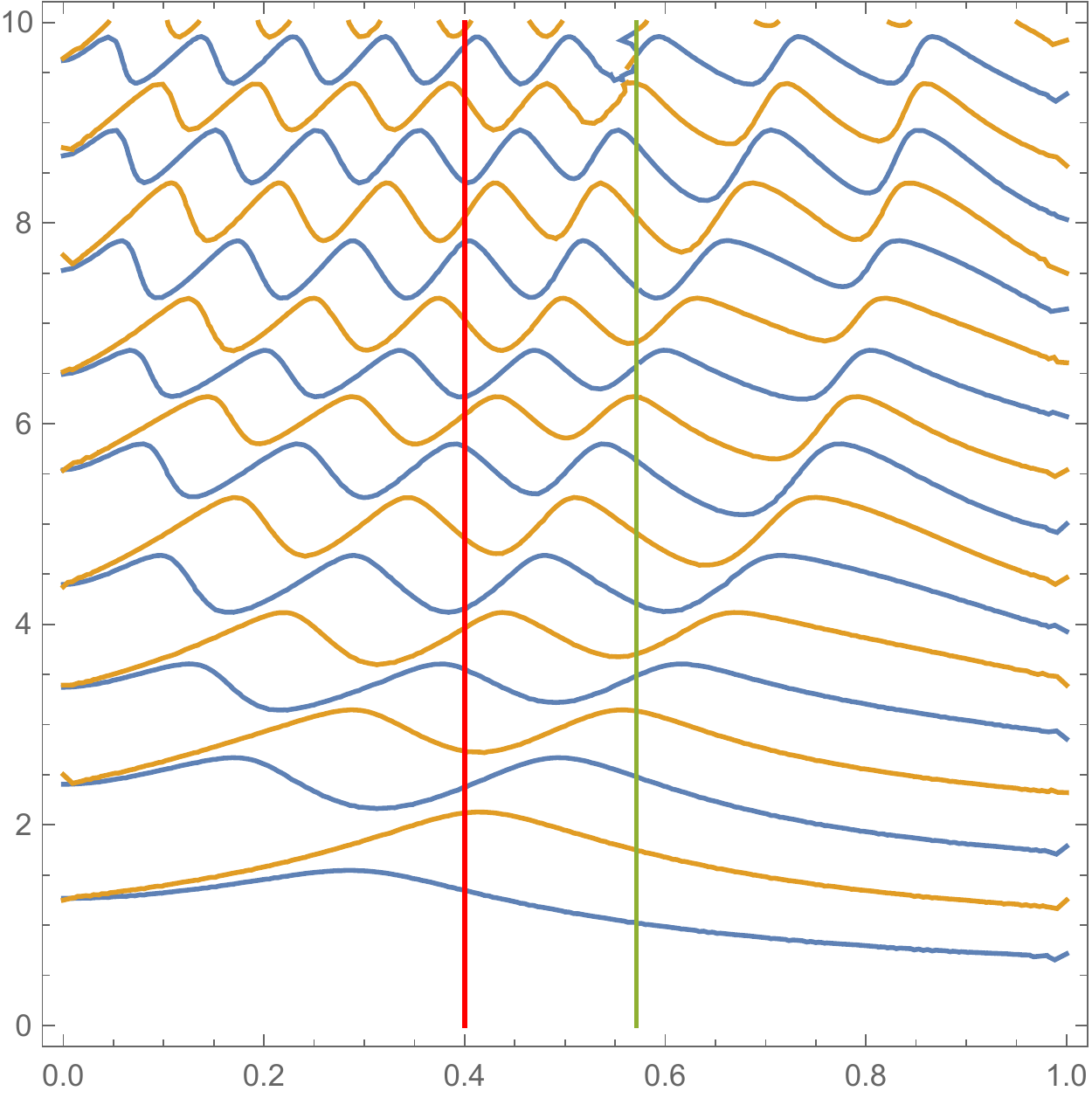}}
\qquad 
{\includegraphics[width=5cm]{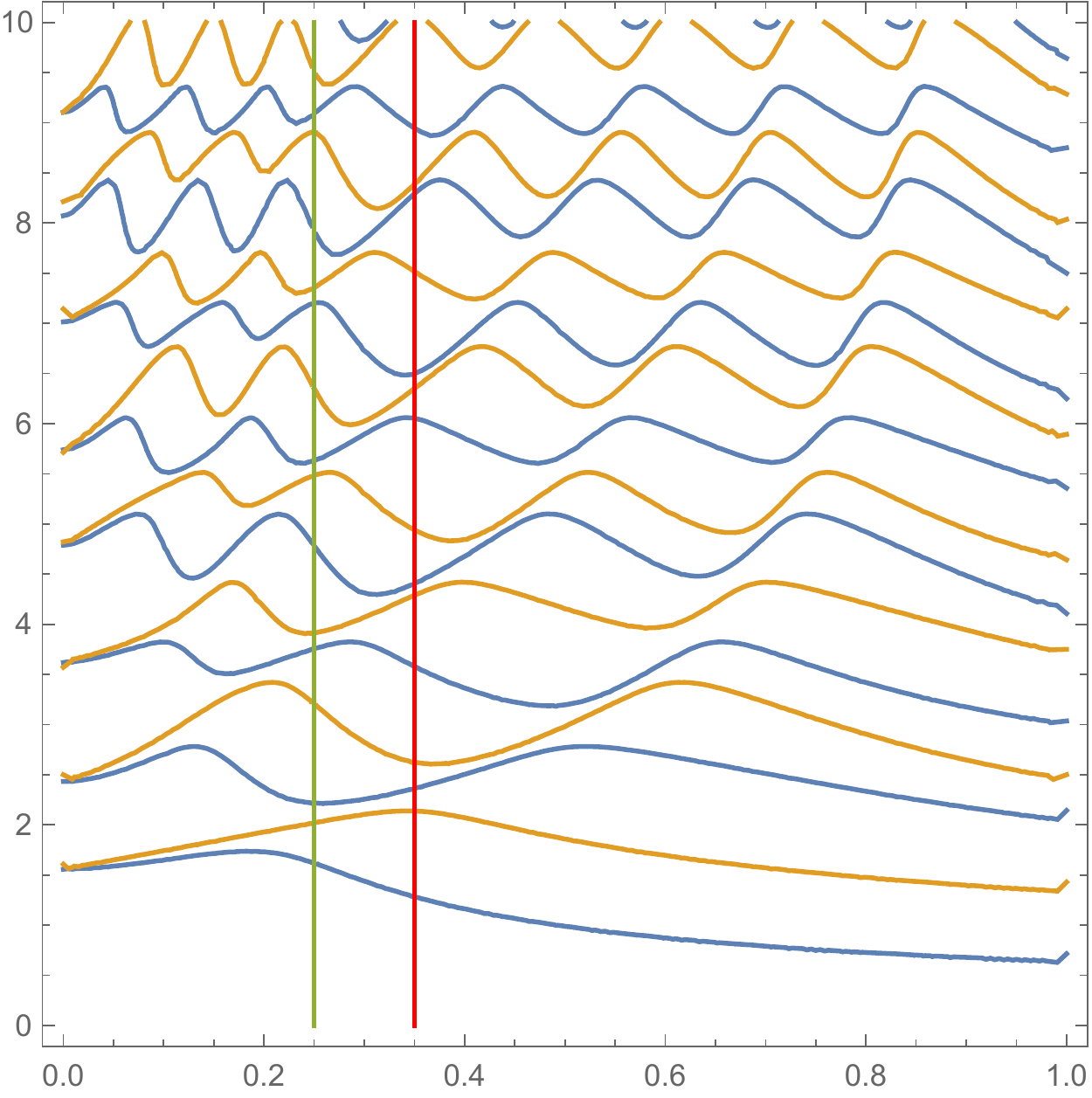}}
\caption{The eigenvalue curves given by Proposition \ref{autoesp} in the $(a, \lambda^{1/4})$-plane, for $\alpha=1/3$ and $\beta=5/2$ (left) and $\alpha=1/3$ and $\beta=3$ (right), in the case $p(0)=\beta$; the green vertical line corresponds to $a=\rho$, the red vertical line corresponds to $a=a_{\text{opt}}$.} %the optimal $a$ in terms of linear stability for the associated two-step density.
\label{autovalori}
\end{figure}

\section{Numerical results for more general densities and conclusions}\label{num} 
In this section we repeat the stability analysis of Section \ref{stab2step} taking into account the possibility of having bang-bang densities with an arbitrary number of jumps; this is done by computing numerically the critical energy thresholds and optimizing with respect to the number of jumps, the position of the piers and the densities of the materials.
\subsection{Numerical computation of the eigenvalues}\label{num_eigen}
We first recall that the eigenvalues and the corresponding eigenfunctions of \eqref{beam0} with $p\equiv 1$ are explicitly known from \cite[Theorem 6]{gazgar}. We normalize such eigenfunctions in $L^2$ and, according to their parity, we denote them by $\eta^e_j(x)$ (even) and $\eta^o_j(x)$ (odd), with corresponding eigenvalues $\Lambda_j^e$ and $\Lambda_j^o$, respectively.\par 
Next, since $\{\eta^e_j, \eta^o_j\}_j$ is a complete system (both in $L^2$ and in $V$), for fixed $a\in (0,1)$ and $p\in P_{\alpha, \beta}$ we expand the eigenfunctions of \eqref{beam0} with respect to such a basis; due to the parity of $p$, also the eigenfunctions of \eqref{beam0} are either even or odd, hence we can expand them as follows:
\begin{equation}
e_{\lambda^e}(x,p)=\sum_{i=1}^{\infty} a_i \eta^e_i(x) \quad \text{(even)} \qquad \text{or} \qquad e_{\lambda^o}(x,p)=\sum_{i=1}^{\infty} b_i\eta^o_i(x)\, \quad \text{(odd)}.
\label{fou}
\end{equation}
We truncate the expressions in \eqref{fou} at a certain order $N$ and we insert 
them into \eqref{beamweak}; testing with $v=\eta_j^e(x)$ or $v=\eta_j^o(x)$, we get the following two linear systems in the unknown $a_i$ and $b_i$:
\begin{equation*}
\left\{
\begin{array}{l}
	a_j\Lambda^e_j=\lambda_j^e \sum\limits_{i=1}^{N} a_i \, \big[\int_Ip(x)\eta^e_i(x)\eta^e_j(x)\,dx\big]\\
	b_j\Lambda^o_j=\lambda_j^o\sum\limits_{i=1}^{N} b_i \, \big[\int_Ip(x)\eta^o_i(x)\eta^o_j(x)\,dx\big],
\end{array}
\quad j=1,\dots,N.
\right.
\end{equation*}
The eigenvalues $\lambda_j^e$ and $\lambda_j^o$ of \eqref{beam0} are then determined numerically by imposing that the determinants associated with the above systems vanish. 
In our numerical scheme, we choose $N=14$; we make this choice in order to be able to properly approximate the eigenvalues of \eqref{beam0} through this procedure, for a general $p$. Indeed, comparing with Proposition \ref{autoesp}, truncating the series at $N=14$ allows us to re-obtain the first twelve eigenvalues of \eqref{beam0} (six even and six odd) for the two-step case (i.e., $p$ as in \eq{defp2} or in \eq{defp22}) with a good level of accuracy.

\subsection{Stability analysis.}
For $a\in (0,1)$ and $0<\alpha <1<\beta$ fixed, we start by optimizing with respect to the density; namely, we consider the problem
\begin{equation}
\label{afix}
\mathcal{E}^*=\mathcal{E}^*(a,\alpha,\beta):=\max_{p\in \overline P}\mathcal{E}(p),
\end{equation}
where $\mathcal{E}$ is, as usual, the instability energy threshold:
\begin{equation}
\label{sogliaenp}
\mathcal{E}(p):=\, \min_{1 \leq j \leq 11} E\big(\lambda_j(p),\lambda_{j+1}(p)\big), 
\end{equation}
$E$ is as given in  \eq{ecritica} and $\overline P$ is a suitable subset of $P_{\alpha, \beta}$, defined according to what proved in Theorem \ref{thm-rapp}. More precisely, we set:
$$
\overline P:=\{p_{1}^{(i)},p_{2}^{(i)},p_{3}^{(i)} \}_{0\leq i\leq 10},
$$ 
where we take $p_1^{(0)} \equiv 1$ and $p_{2}^{(0)}, p_{3}^{(0)}$ equal, respectively, to the weights in \eqref{defp2} and \eqref{defp22}. Then, for $i\geq 1$ and $\ell=1,2,3$, the weights $p^{(i)}=p_{\ell}^{(i)}$ are determined iteratively as follows:
\begin{enumerate}
\item for $i\geq 0$ we numerically compute the eigenvalues of \eqref{beam0} with $p=p^{(i)}$ and we set
$$
\mathcal{E}^{(i)}:=\, \min_{1 \leq j \leq 11} E\big(\lambda_j(p^{(i)}),\lambda_{j+1}(p^{(i)})\big)
$$
and
$$
 g(x,p^{(i)})=	\dfrac{\lambda_{k+1}(p^{(i)})}{\lambda_{k}(p^{(i)})}\,[u_k^2(x,p^{(i)})-u_{k+1}^2(x,p^{(i)})],
$$
where $k=k(i)$ is the index achieving the above minimum, i.e.
$E\big(\lambda_k(p^{(i)}),\lambda_{k+1}(p^{(i)})\big)=\mathcal{E}^{(i)}$;
\item we find $t^{(i)}\in \text{Im}\big(g(x,p^{(i)})\big)$ such that $$I^{(i)}=|g(x,p^{(i)})\geq t^{(i)}|=|I|\dfrac{1-\alpha}{\beta-\alpha}\,;$$
\item we define
$p^{(i+1)}(x):=\beta \chi_{I^{(i)}}(x)+\alpha \chi_{I\setminus I^{(i)}}(x) \in P_{\alpha, \beta}$. 
\end{enumerate}
Intuitively, we proceed by subsequent approximations by trying, at each step, to approach a density maximizing the  ``dangerous'' ratio of eigenvalues, the one corresponding to the instability threshold \eq{sogliaenp}. We stop the algorithm at $i=10$ since, after 10 iterations, the weights seem to become recurrent, independently of the choice of $p^0=p_{\ell}^{(0)}$ in the above classes. 
\par
By computing the eigenvalues of \eqref{beam0} as explained in Section \ref{num_eigen}, we solve problem \eq{afix} numerically and we collect the obtained results in Tables \ref{tab1}-\ref{tab3} for different choices of $\alpha, \beta$ and $a$. More precisely, 
for $a$ belonging to the set $A=\{0.1, 0.2, 0.3, 0.35, 0.4, 0.45, \ldots, 0.65, 0.7, 0.8, 0.9\}$ defined in Section \ref{stab2step} and for selected couples $(\alpha, \beta) \in \Sigma_1 \times \Sigma_2$ (see again Section \ref{stab2step}), in Tables \ref{tab1} and \ref{tab2} we report the value of $\mathcal{E}^*$ defined in \eqref{afix} and the graph of the corresponding maximizer $p^*$ (clearly, $p^*=p^*(x,a,\alpha, \beta)$), together with the number $N^*$ of discontinuities of $p^*$ for $x > 0$ (by symmetry, this number is the same for $x<0$). %As for the choice of $a$, as in Section \ref{stab2step} we take $a \in A=\{0.1, 0.2, 0.3, 0.35, 0.4, 0.45, \ldots, 0.65, 0.7, 0.8, 0.9\}$. On the other hand, 
We show in details the results for the choices $(\alpha, \beta)=(1/2,3/2), (\alpha, \beta)=(1/2, 2)$ (Table \ref{tab1}) and $(\alpha, \beta)=(1/3,3/2), (\alpha, \beta)=(1/3, 3)$ (Table \ref{tab2}), in order not to overload the contents; for other couples $(\alpha, \beta) \in \Sigma_1 \times \Sigma_2$, the results appear similar (see also Table \ref{tab3} and Figure \ref{plotene}).
We also notice that in Tables \ref{tab1} and \ref{tab2} the critical energy thresholds (recall definition \eqref{sogliaenp}) are always attained in correspondence of the ratio of the first two eigenvalues ($\lambda_2/\lambda_1$), except for few cases  in which they are attained by $\lambda_3/\lambda_2$, pointed out in the tables through the symbol~\dag. 
\par
By taking the maximum of $\mathcal{E}^*$ with respect to $a$, we then obtain the solution of the problem
$$
\max_{a\in A}\mathcal{E}^*(a,\alpha,\beta)=\mathcal{E}^*(a_{opt},\alpha,\beta),
$$
where $a_{opt}=a_{opt}(\alpha,\beta)$ denotes the corresponding maximizer. In Table \ref{tab3}, we give an overview of the optimal position $a_{opt}$ of the piers for all the densities $(\alpha, \beta) \in \Sigma_1 \times \Sigma_2$, reporting the associated energy threshold of instability $\mathcal{E}^*$ (together with the corresponding ratio of eigenvalues $R$) and the number $N^*$ of discontinuities of the optimal weight, which allows to reconstruct the qualitative shape of the optimal density $p^*$.
\smallbreak
From Tables \ref{tab1}-\ref{tab3} we draw the following conclusions:
\begin{enumerate}[a)] 
\item comparing with the homogeneous case $p \equiv 1$ \cite[Table 3.9]{GarGazBK}, for every $a$ the non homogeneity gives rise, in general, to higher energy thresholds of instability; 
\item for fixed $a$ in the ``physical range'' \eqref{physrangea},
with few exceptions (for $\alpha$ close to $1$), the optimal density $p^*$ has 4 jumps for $x > 0$, hence it is not of the kind examined in Section \ref{2step};
\item on the contrary, if the pier is placed next to the center of the beam ($a < 0.30$) or next to the endpoints  ($a > 0.70$) and $\beta$ is sufficiently large, then the optimal density is two-step; in the former case, the beam is reinforced in the middle, while in the latter it is reinforced next to the endpoints; 
\item  in most cases, the heavier material has to be located around the piers, with very few exceptions, always for $a$ outside the range \eq{physrangea};
\item the discontinuities of $p^*$ are located a little before and a little after the piers, and a little before reaching the endpoints (namely, the reinforces around the pier and around the endpoints take place along small lengths); 
\item in most cases, we have
$$
a_\text{opt}=0.50\,,
$$
namely the optimal position of the piers is mostly the one for which \emph{the length of the central span is twice the length of the lateral spans} (a fact also observed, e.g., in \cite{Gar, GarGazBK}). More in general, the best performances are always obtained for $a \in \{0.50, 0.55\}$. 
\end{enumerate}
Finally, a ``visual'' summary of all the results is provided in Figure \ref{plotene}, where we plot the values of $\mathcal{E}^*=\mathcal{E}^*(a,\alpha,\beta)$ versus $a$, for $(\alpha, \beta) \in \Sigma_1 \times \Sigma_2$. The picture confirms that the best location of piers is at $a=0.50$; furthermore, we infer that  %i.e the length of the central span must be twice the length of the lateral spans; furthermore we infer that
$$
\max_{(a,\alpha,\beta)\in A\times \Sigma_1 \times \Sigma_2} \mathcal{E}^*(a,\alpha,\beta)=\mathcal{E}^*(0.50,1/3,3)\,.
$$

\clearpage
\begin{table}[h!]\centering
	\resizebox{\textwidth}{!}{\begin{tabular}{|c||c|c|c||c|c|c|}
			\hline
			$a$& \multicolumn{3}{c||}{	$\alpha=1/2$, $\beta =3/2$} & \multicolumn{3}{c|}{$\alpha=1/2$, $\beta =2$} \\	\hline
			&$\mathcal{E}^*/10^2$&$N^*$&$p^*(x)$&$\mathcal{E}^*/10^2$&$N^*$&$p^*(x)$\rule[2mm]{0mm}{4mm} \\
			\hline
			\hline
			0.10&$\dfrac{4.27}{10^{2}}$&2& \parbox[c]{65mm}{\includegraphics[scale=0.018]{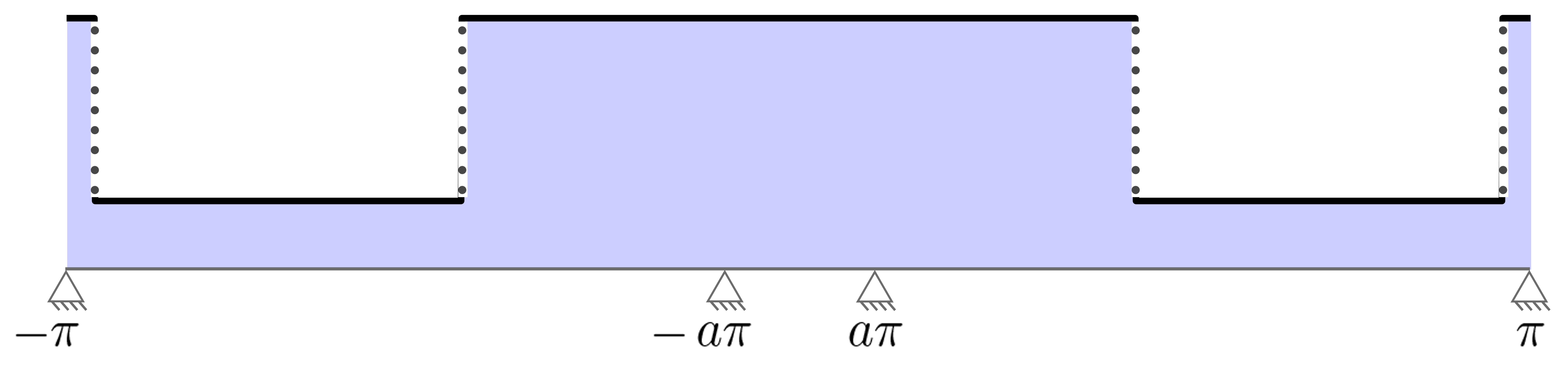}}&$\dfrac{6.50}{10^{2}}$&1& \parbox[c]{65mm}{\includegraphics[scale=0.018]{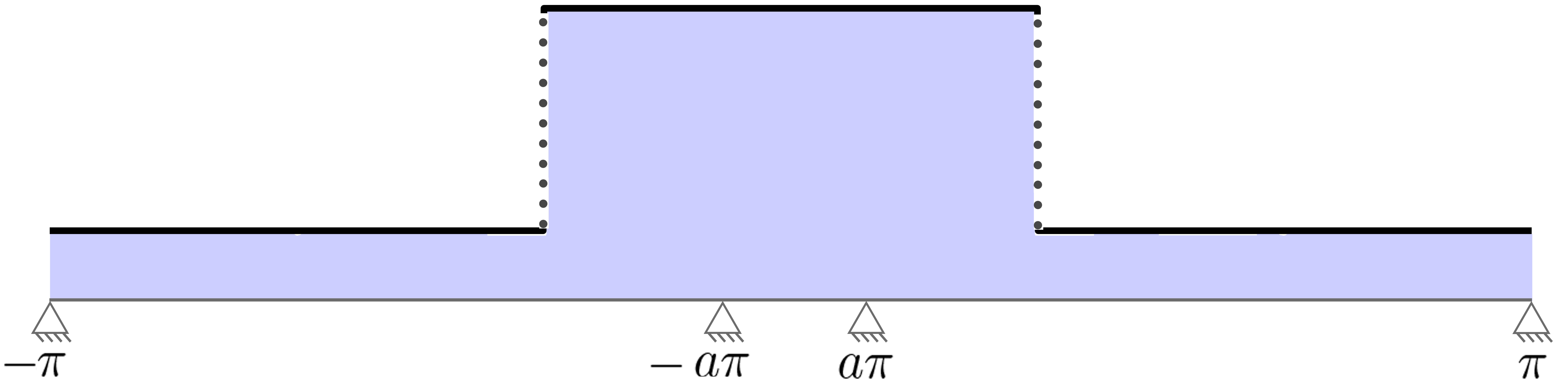}}
			\rule[2mm]{0mm}{1cm} \\
			0.20&$\dfrac{1.48}{10}$&2& \parbox[c]{65mm}{\includegraphics[scale=0.018]{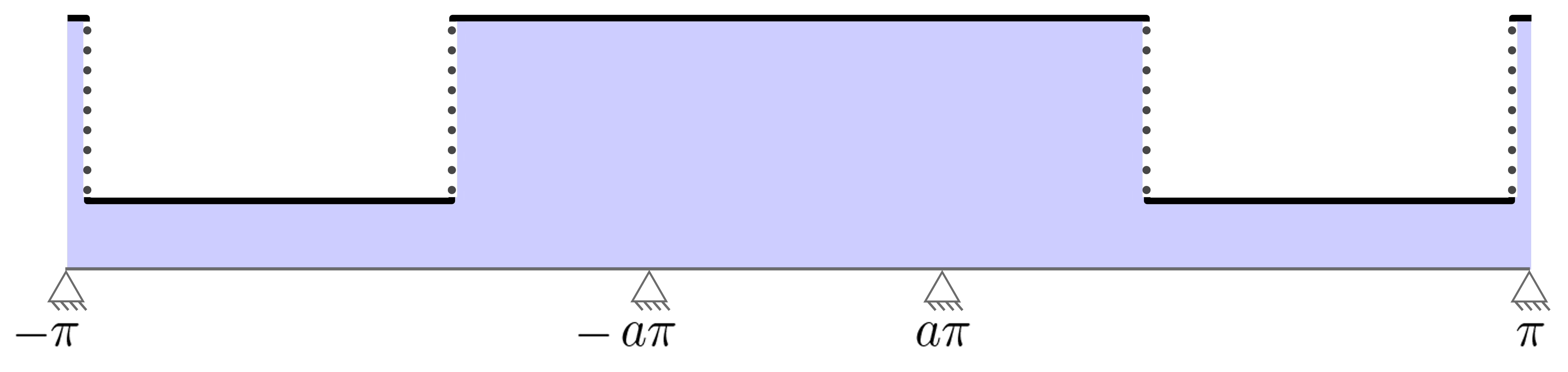}}&$\dfrac{2.25}{10}$&1& \parbox[c]{65mm}{\includegraphics[scale=0.018]{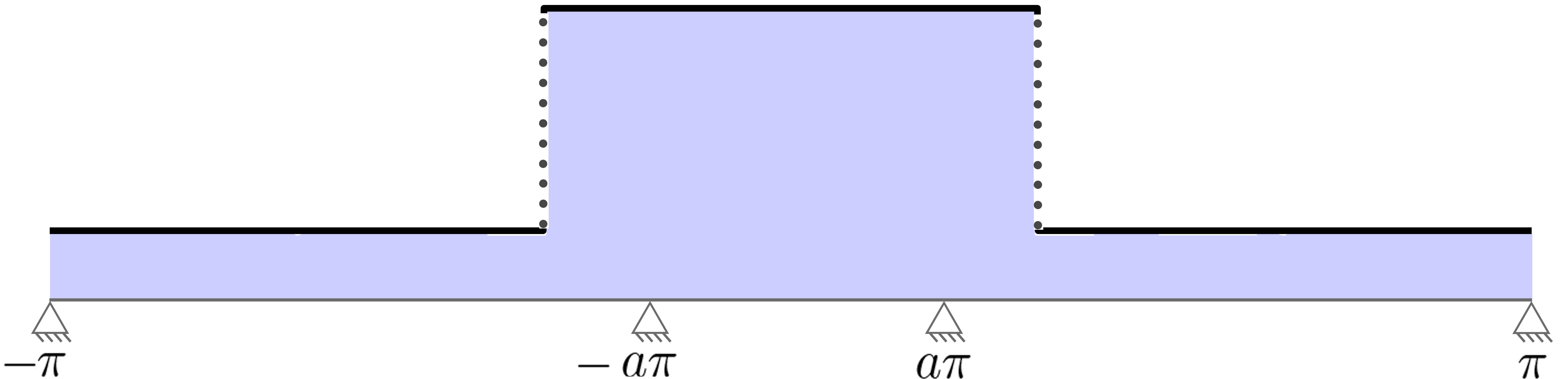}}
			\rule[2mm]{0mm}{0.8cm} \\
			0.30&$\dfrac{9.05}{10}$&2& \parbox[c]{65mm}{\includegraphics[scale=0.018]{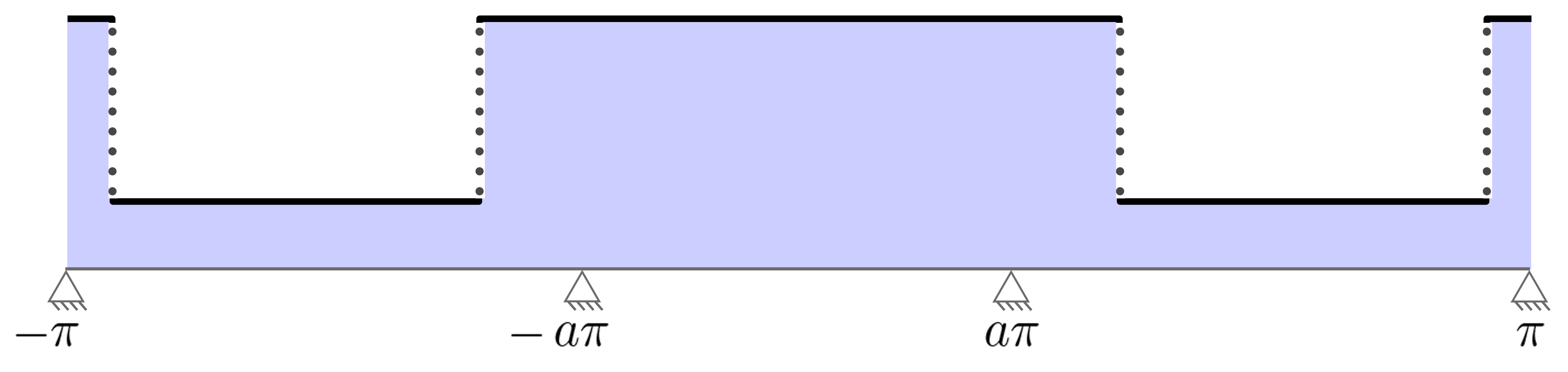}}&1.16&1& \parbox[c]{65mm}{\includegraphics[scale=0.018]{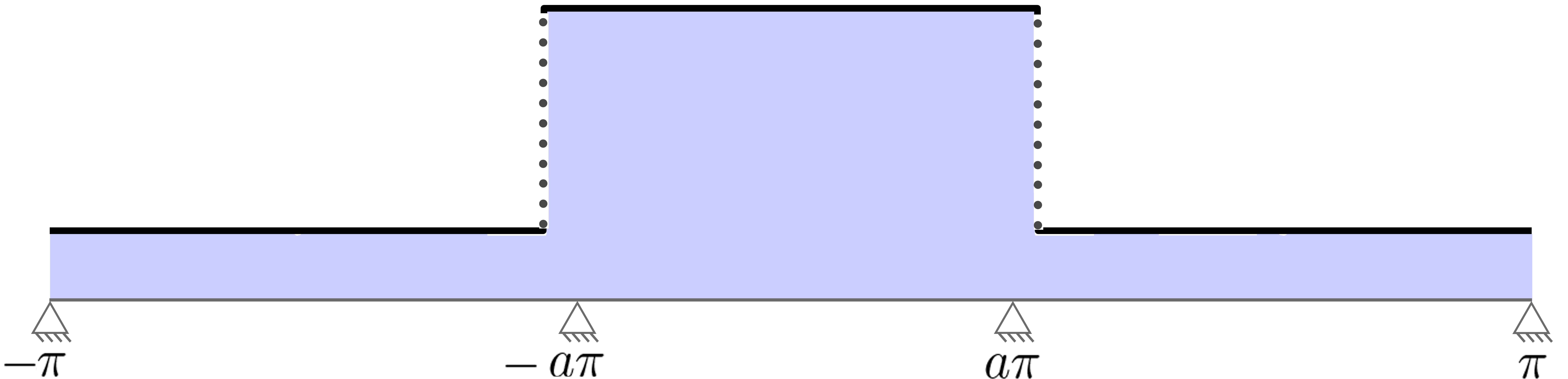}}
			\rule[2mm]{0mm}{0.8cm} \\
			0.35&1.47$^\dag$ &2& \parbox[c]{65mm}{\includegraphics[scale=0.018]{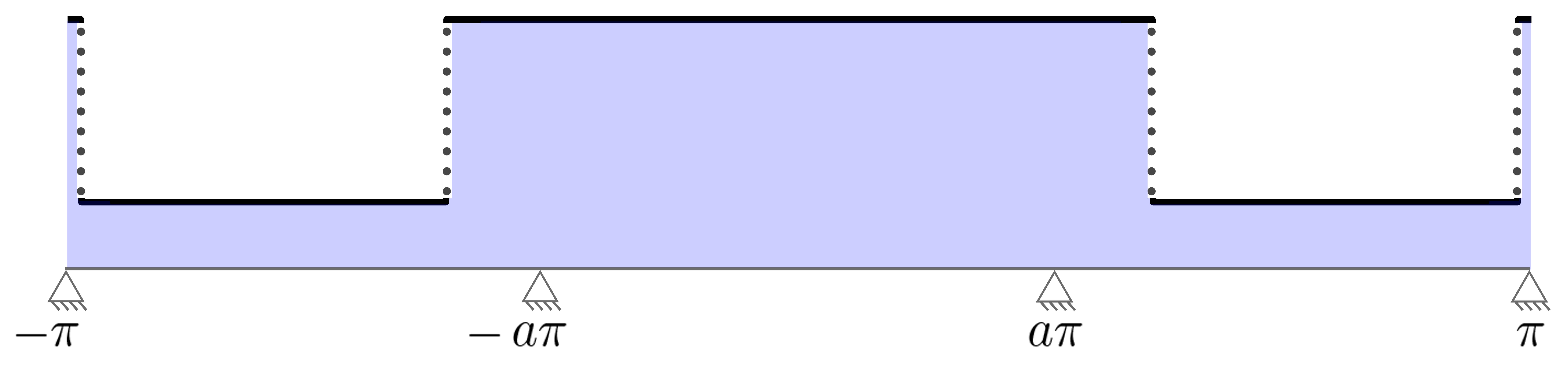}}&1.30$^\dag$&3& \parbox[c]{65mm}{\includegraphics[scale=0.018]{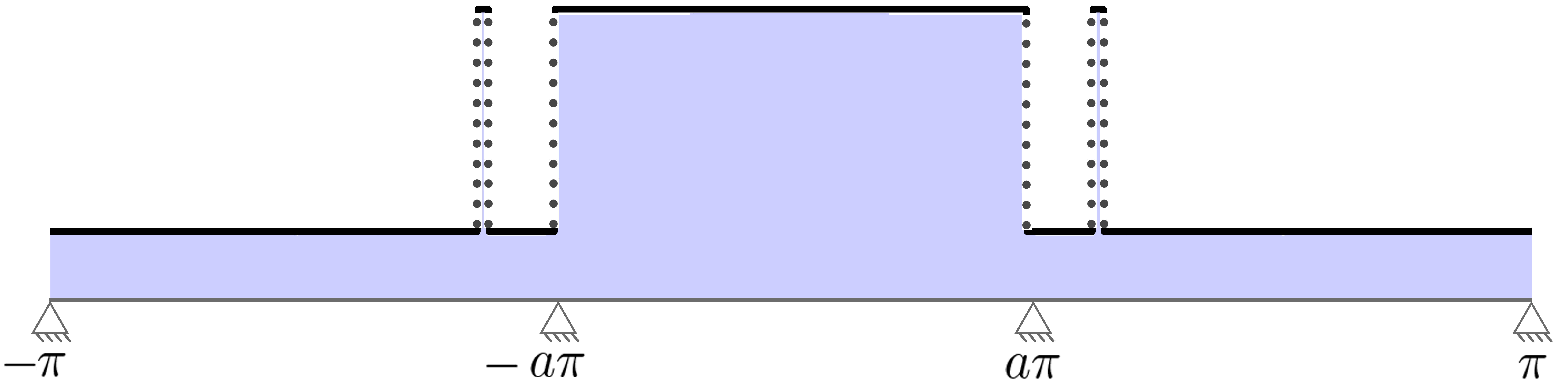}}
			\rule[2mm]{0mm}{0.8cm} \\
			0.40&2.37$^\dag$&2& \parbox[c]{65mm}{\includegraphics[scale=0.018]{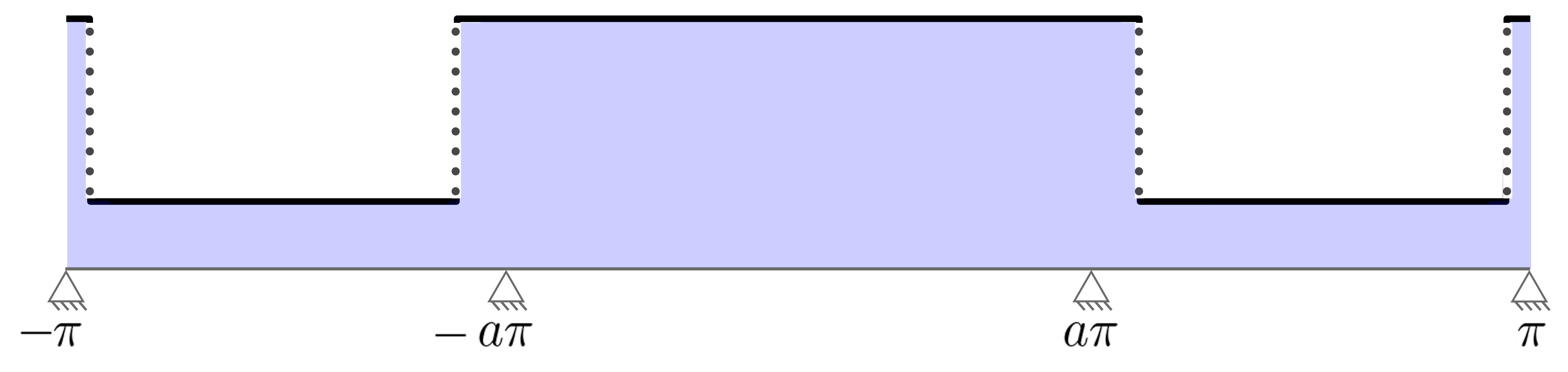}}&2.67$^\dag$&1& \parbox[c]{65mm}{\includegraphics[scale=0.018]{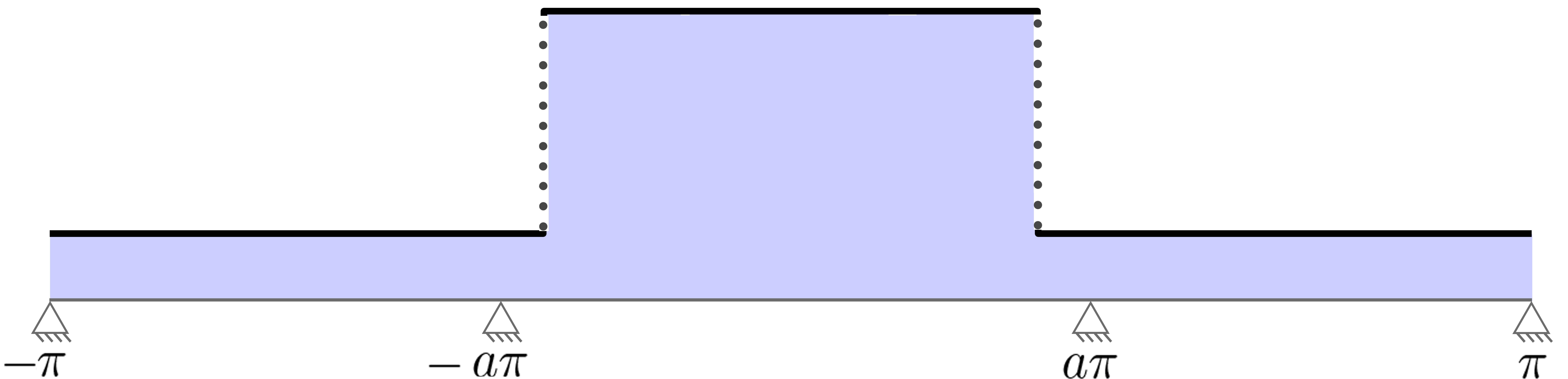}}
			\rule[2mm]{0mm}{0.8cm} \\
			0.45&3.84$^\dag$&4& \parbox[c]{65mm}{\includegraphics[scale=0.018]{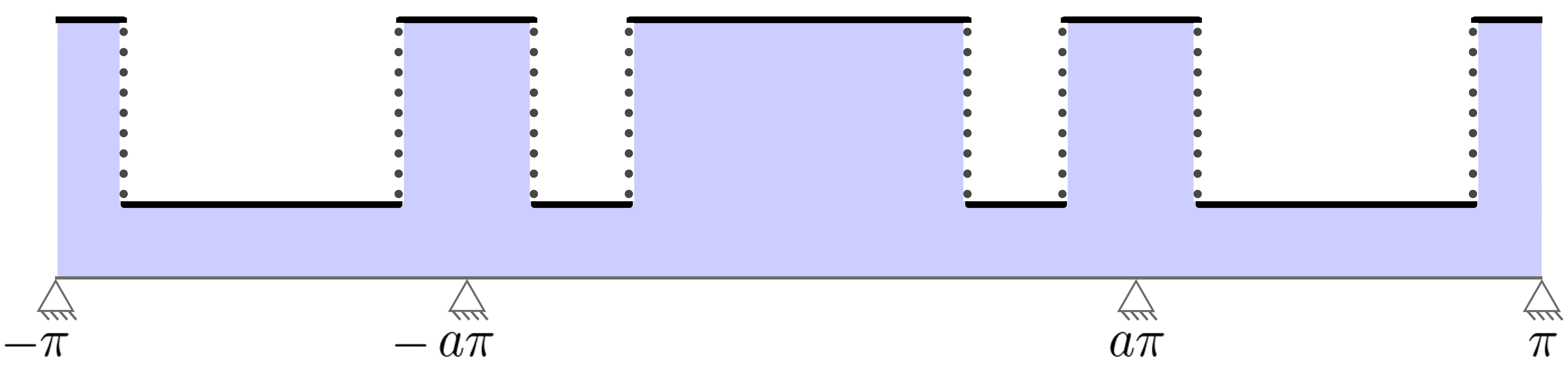}}&4.33$^\dag$&4& \parbox[c]{65mm}{\includegraphics[scale=0.018]{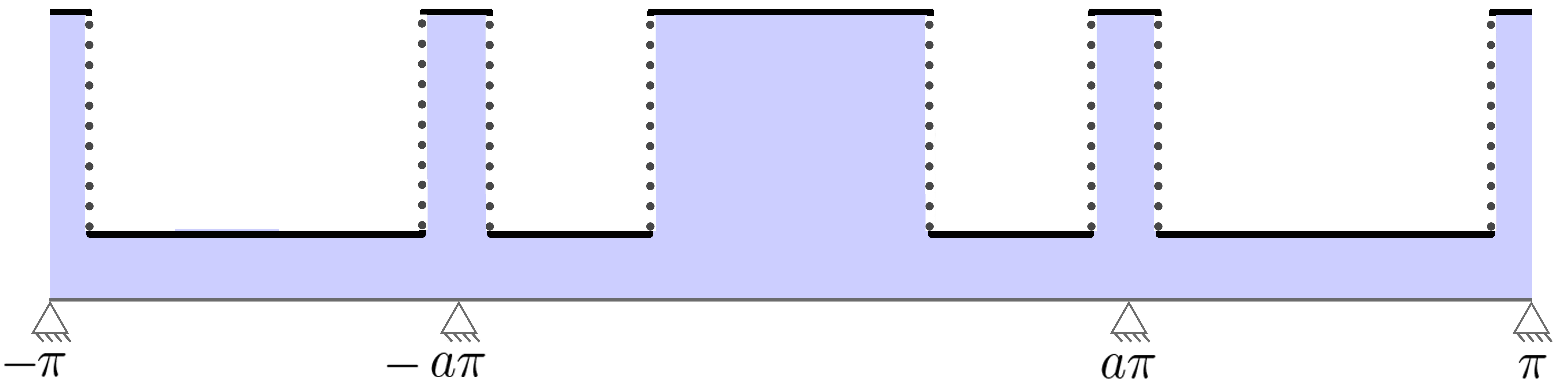}}
			\rule[2mm]{0mm}{0.8cm} \\
			0.50&4.54&4& \parbox[c]{65mm}{\includegraphics[scale=0.018]{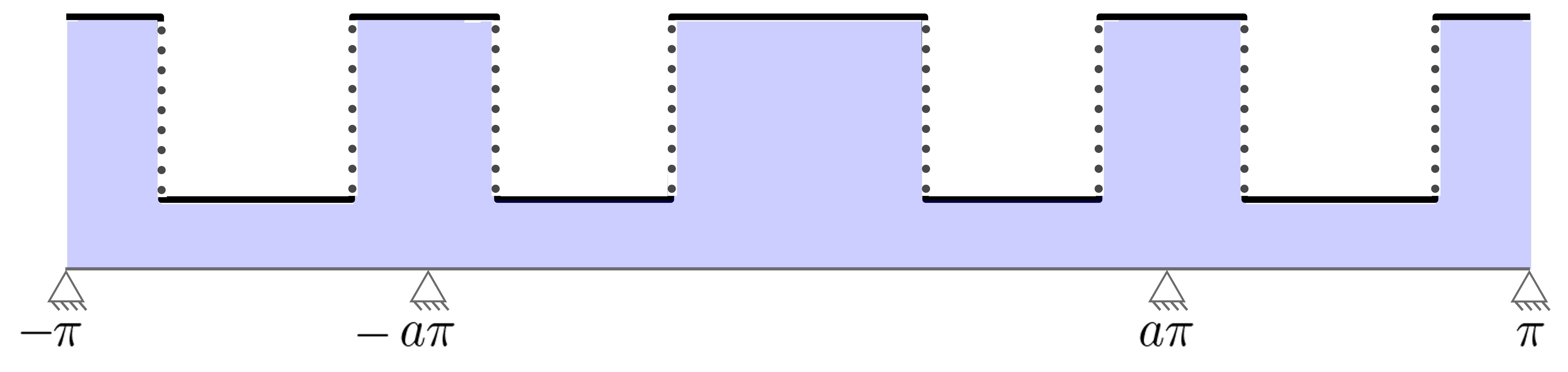}}&5.66&4& \parbox[c]{65mm}{\includegraphics[scale=0.018]{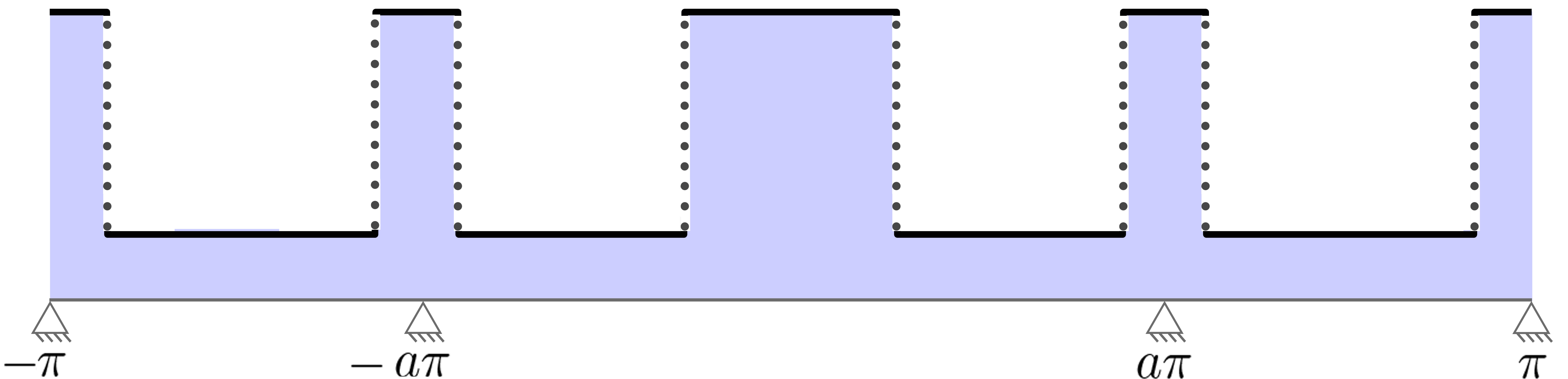}}
			\rule[2mm]{0mm}{0.8cm} \\
			0.55&3.98&4& \parbox[c]{65mm}{\includegraphics[scale=0.018]{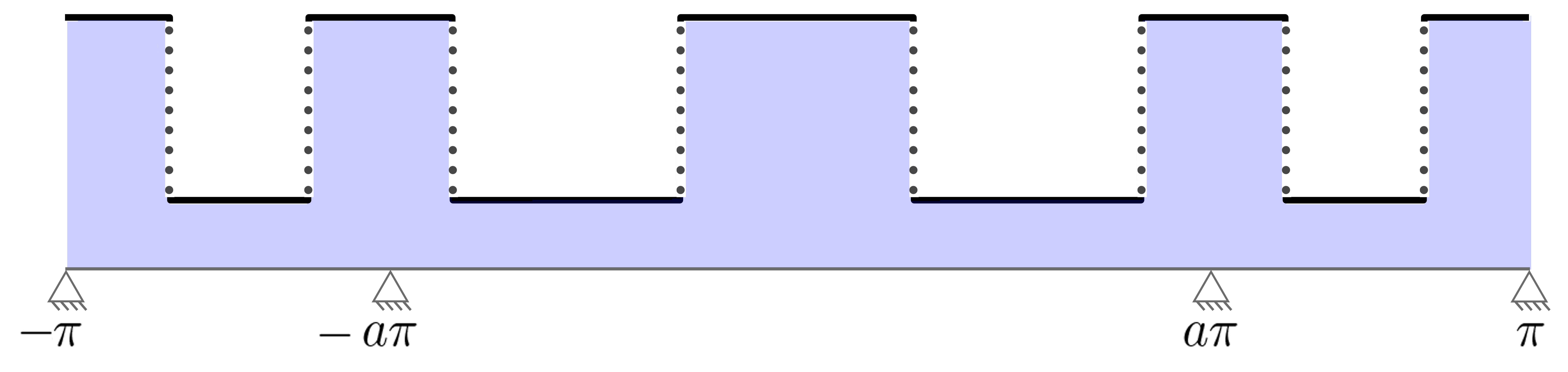}}&4.65&4& \parbox[c]{65mm}{\includegraphics[scale=0.018]{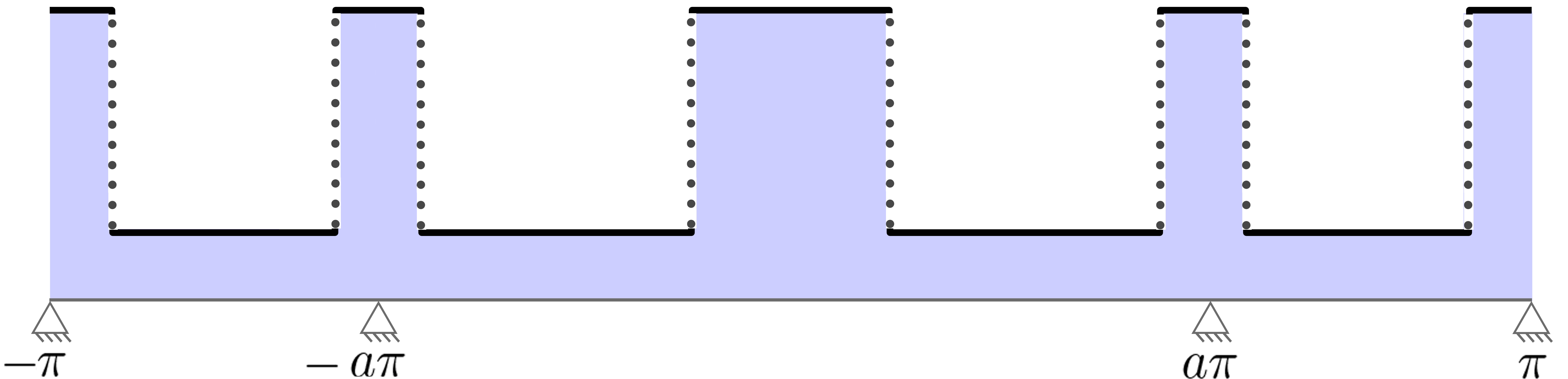}}
			\rule[2mm]{0mm}{0.8cm} \\
			0.60&3.06&1& \parbox[c]{65mm}{\includegraphics[scale=0.018]{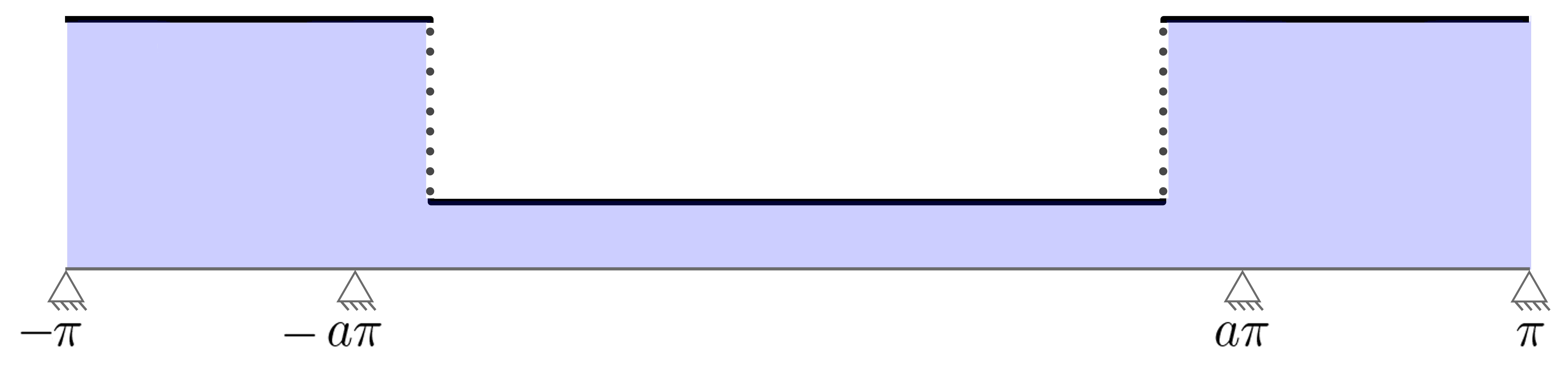}}&3.12&4& \parbox[c]{65mm}{\includegraphics[scale=0.018]{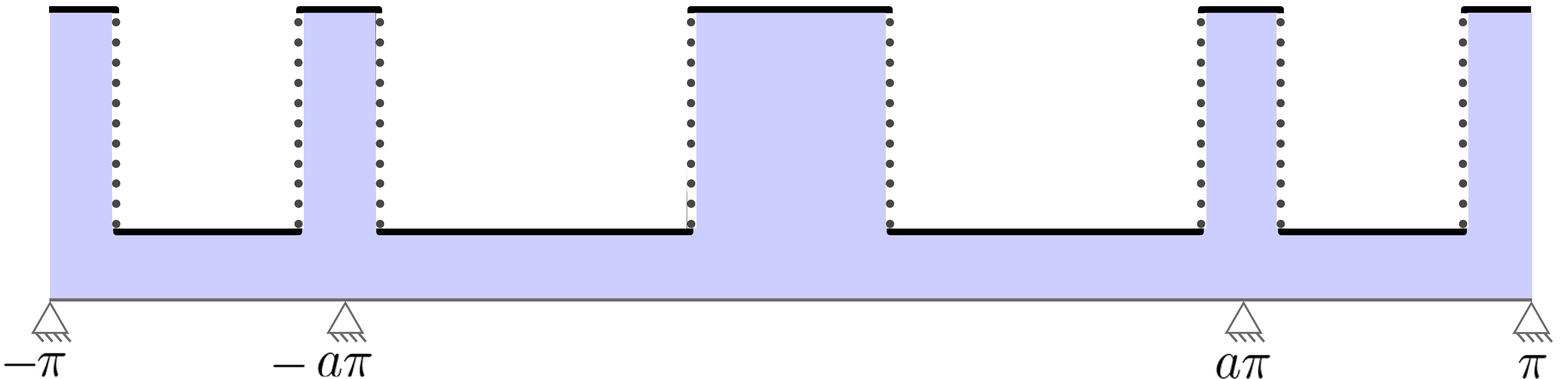}}
			\rule[2mm]{0mm}{0.8cm} \\
			0.65&2.12&1& \parbox[c]{65mm}{\includegraphics[scale=0.018]{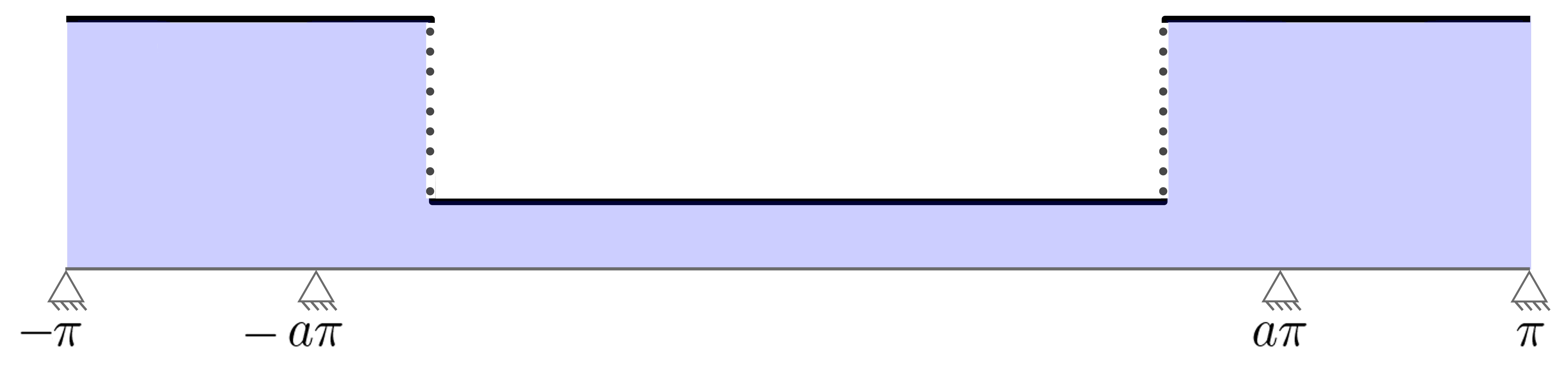}}&2.38&1& \parbox[c]{65mm}{\includegraphics[scale=0.018]{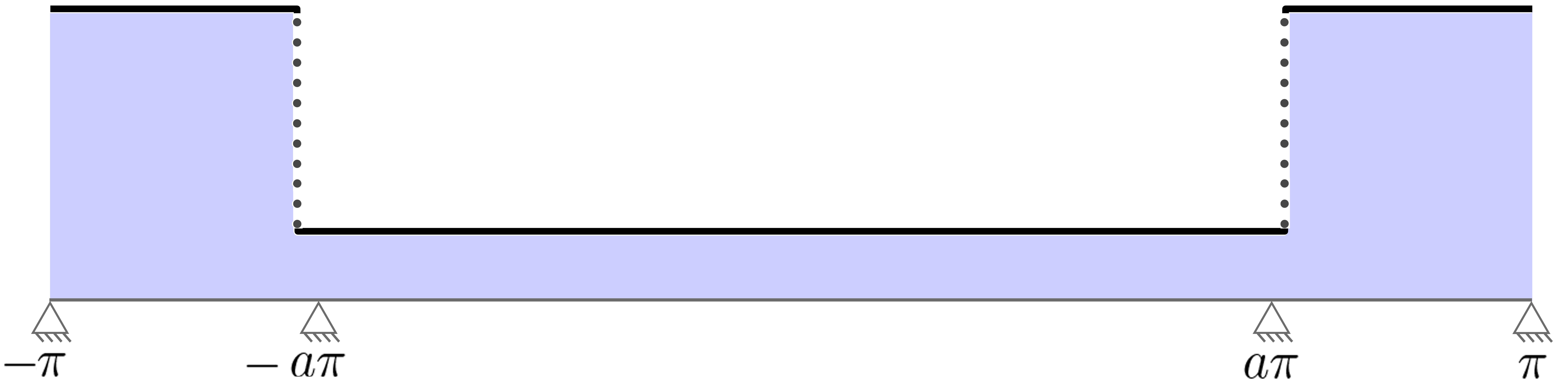}}
			\rule[2mm]{0mm}{0.8cm} \\
			0.70&1.27&4& \parbox[c]{65mm}{\includegraphics[scale=0.018]{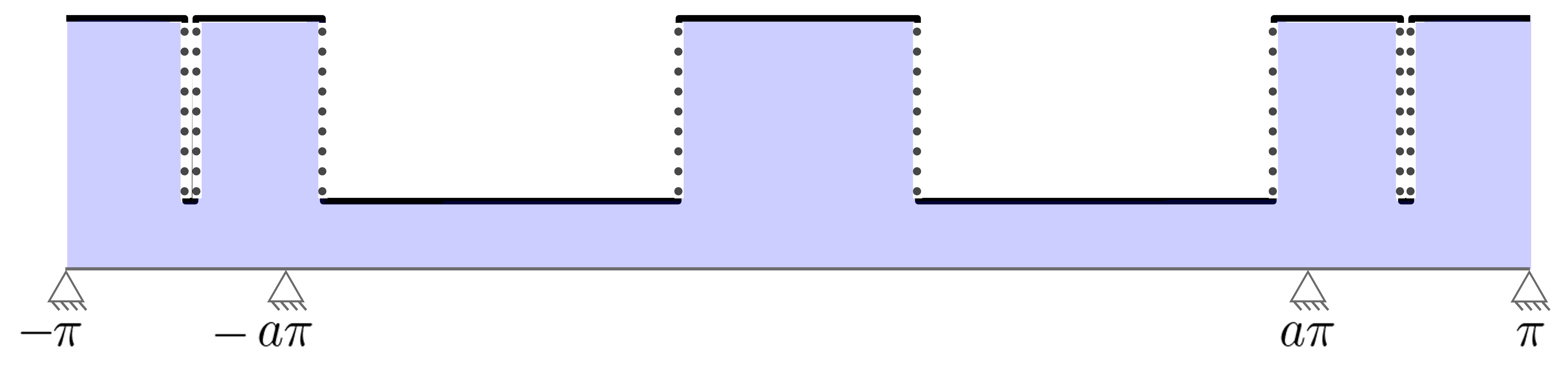}}&1.67&1& \parbox[c]{65mm}{\includegraphics[scale=0.018]{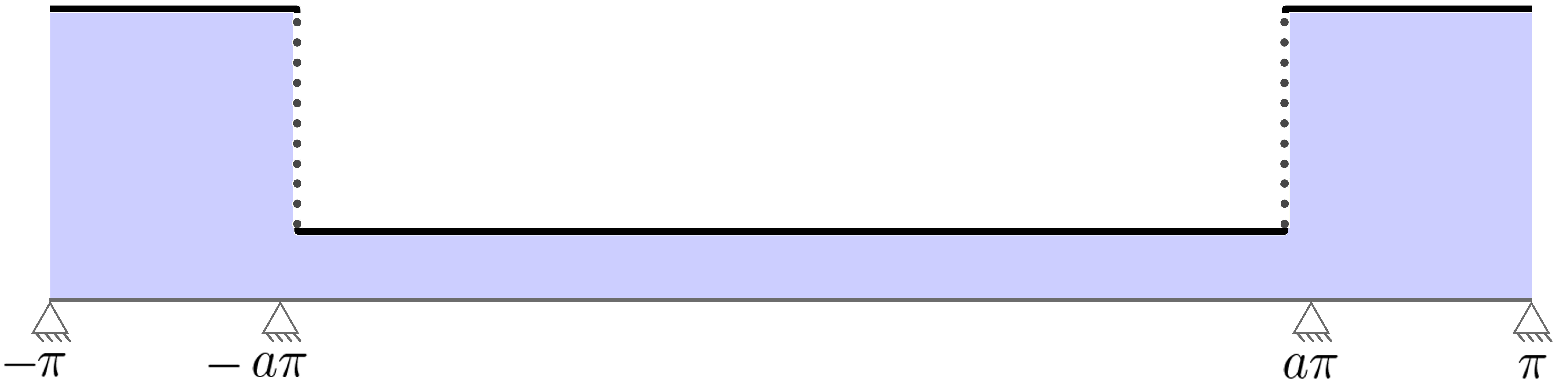}}
			\rule[2mm]{0mm}{0.8cm} \\
			0.80&$\dfrac{5.35}{10}$&2& \parbox[c]{65mm}{\includegraphics[scale=0.018]{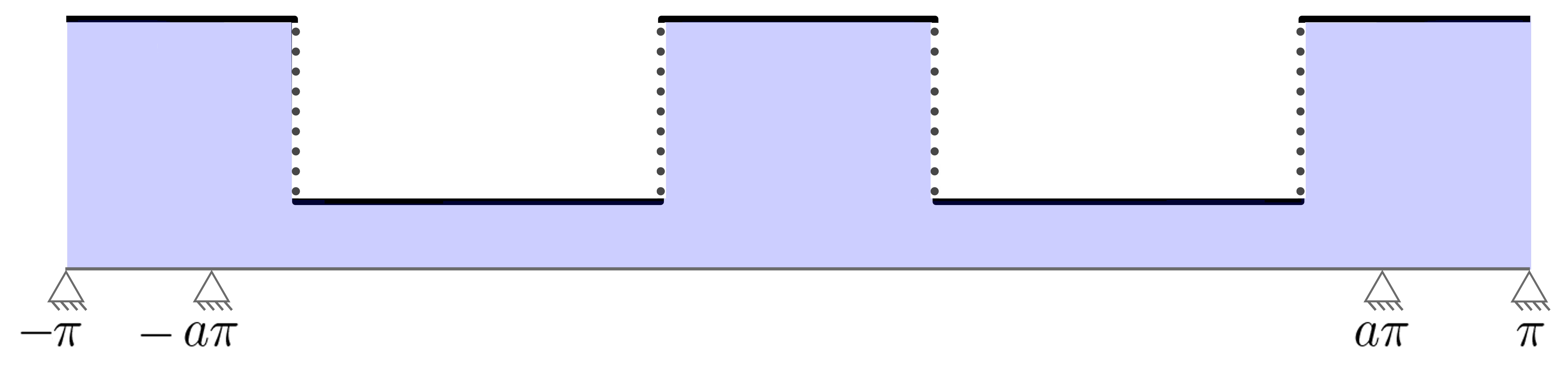}}&$\dfrac{7.05}{10}$&1& \parbox[c]{65mm}{\includegraphics[scale=0.018]{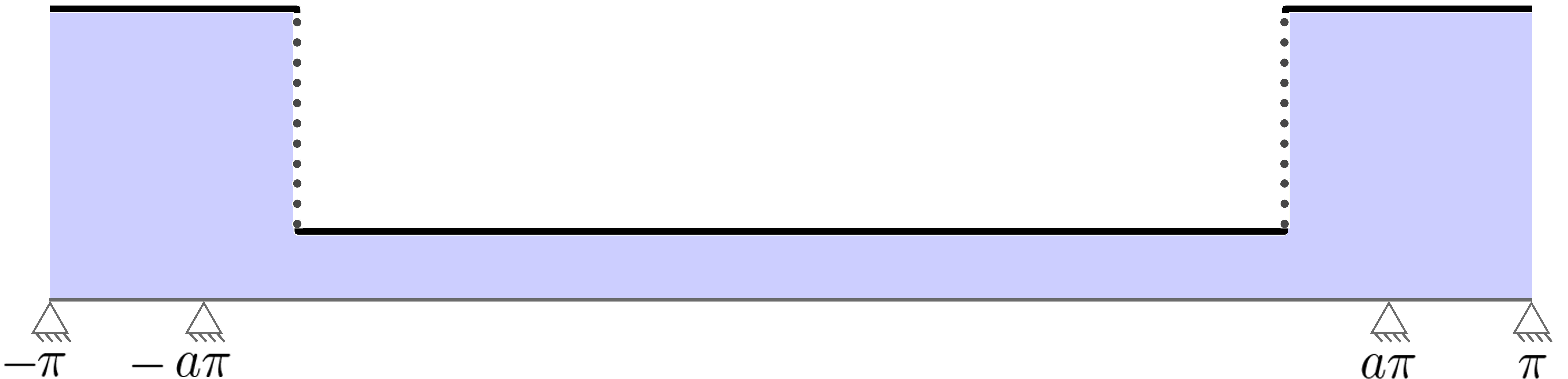}}
			\rule[2mm]{0mm}{0.8cm} \\
			0.90&$\dfrac{2.42}{10}$&2& \parbox[c]{65mm}{\includegraphics[scale=0.018]{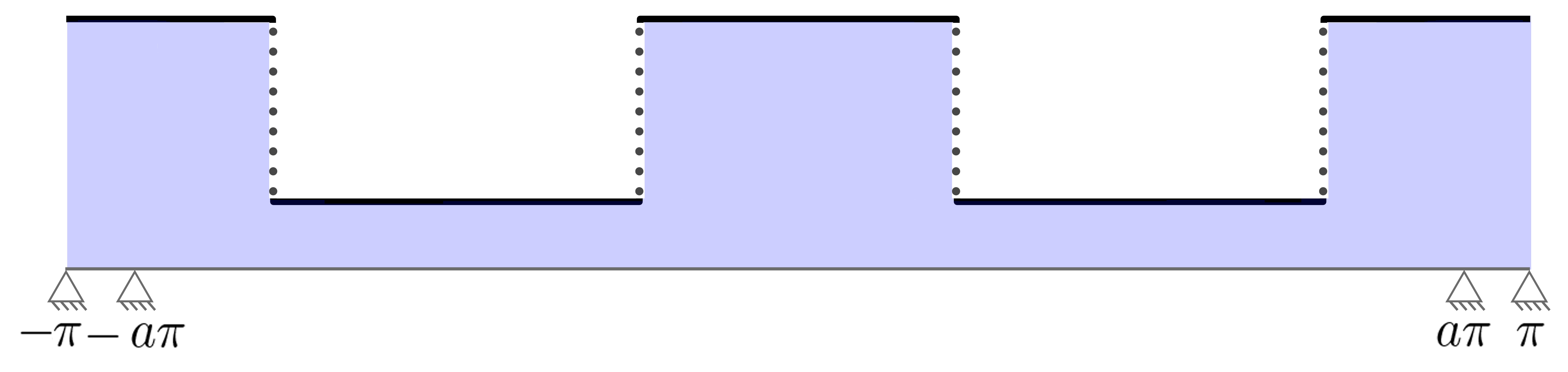}}&
			$\dfrac{2.87}{10}$&1& \parbox[c]{65mm}{\includegraphics[scale=0.018]{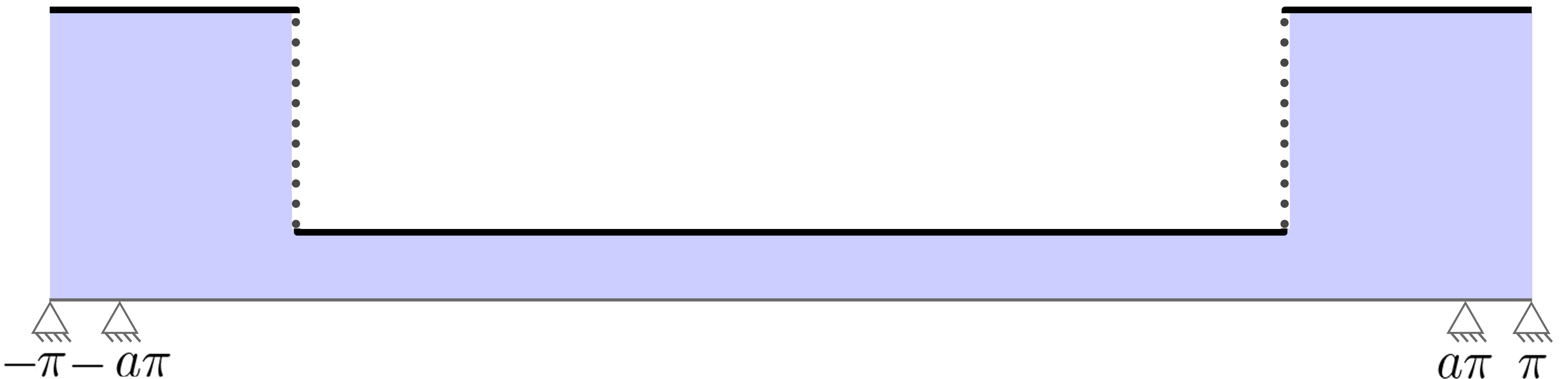}}
			\rule[2mm]{0mm}{0.8cm} \\
			\hline
	\end{tabular}} 
		
	\vspace{3mm}
	\caption{For different values of $\alpha$, $\beta$ and $a$, the numerical values of the critical energy threshold in \eqref{afix}, the graph of the corresponding piecewise constant maximizer $p^*(x)$ and the number $N^*$ of its discontinuities for $x > 0$.}
	\label{tab1}
\end{table}

\clearpage
\begin{table}[htbp!]\centering
	\resizebox{\textwidth}{!}{\begin{tabular}{|c||c|c|c||c|c|c|}
			\hline
			$a$& \multicolumn{3}{c||}{	$\alpha=1/3$, $\beta =3/2$} & \multicolumn{3}{c|}{$\alpha=1/3$, $\beta =3$} \\	\hline
			&$\mathcal{E}^*/10^2$&$N^*$&$p^*(x)$&$\mathcal{E}^*/10^2$&$N^*$&$p^*(x)$\rule[2mm]{0mm}{4mm} \\
			\hline
			\hline
			0.10&$\dfrac{6.11}{10^{2}}$&2& \parbox[c]{65mm}{\includegraphics[scale=0.018]{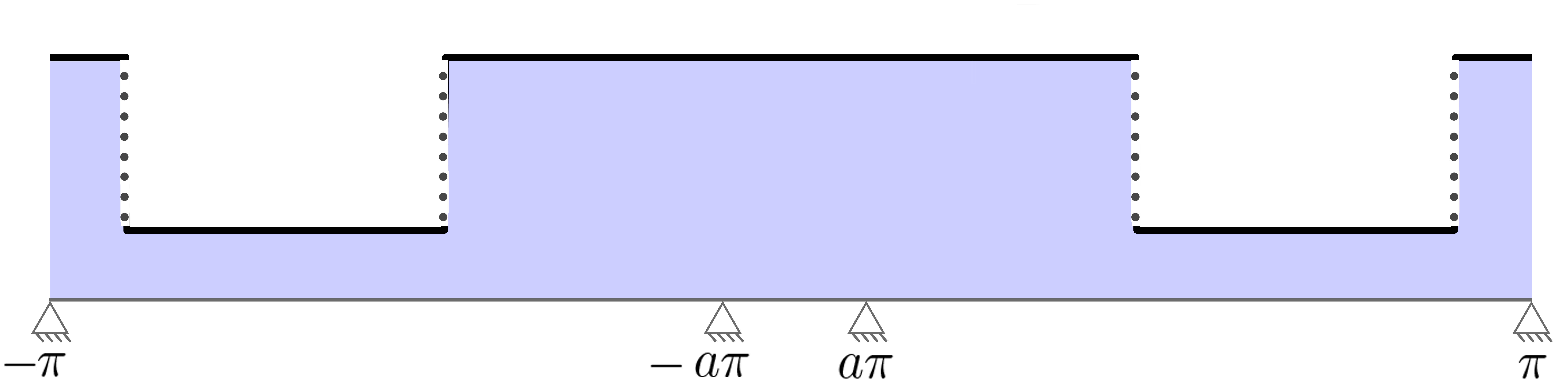}}&$\dfrac{1.66}{10}$&1& \parbox[c]{65mm}{\includegraphics[scale=0.018]{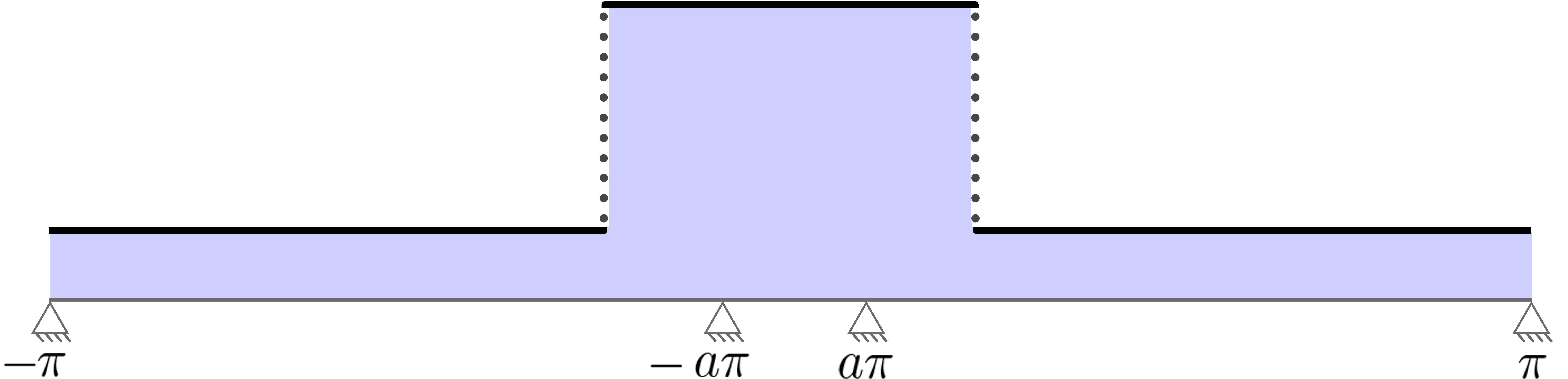}}
			\rule[2mm]{0mm}{1cm} \\
			0.20&$\dfrac{2.38}{10}$&2& \parbox[c]{65mm}{\includegraphics[scale=0.018]{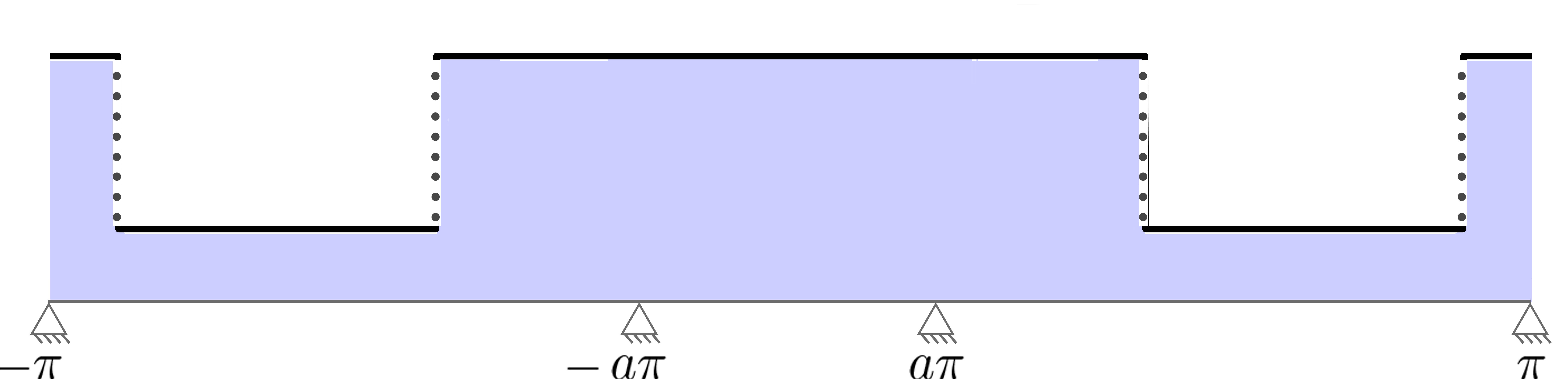}}&$\dfrac{6.76}{10}$&1& \parbox[c]{65mm}{\includegraphics[scale=0.018]{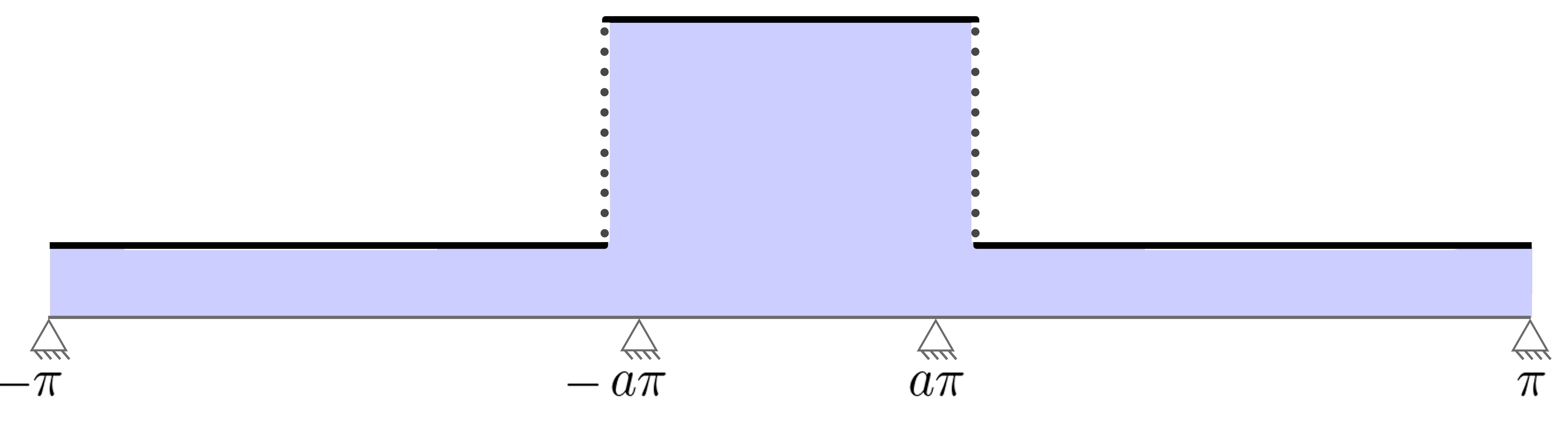}}
			\rule[2mm]{0mm}{0.8cm} \\
			0.30&1.59&2& \parbox[c]{65mm}{\includegraphics[scale=0.018]{p_03.jpg}}&1.65$^\dag$&1& \parbox[c]{65mm}{\includegraphics[scale=0.018]{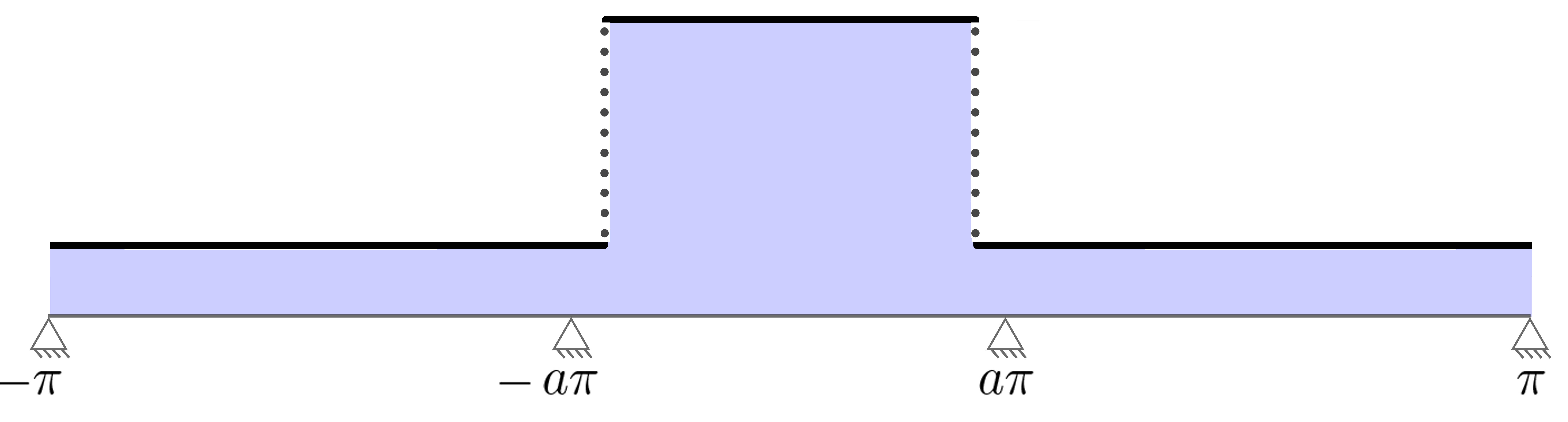}}
			\rule[2mm]{0mm}{0.8cm} \\
			0.35&2.49$^\dag$&2& \parbox[c]{65mm}{\includegraphics[scale=0.018]{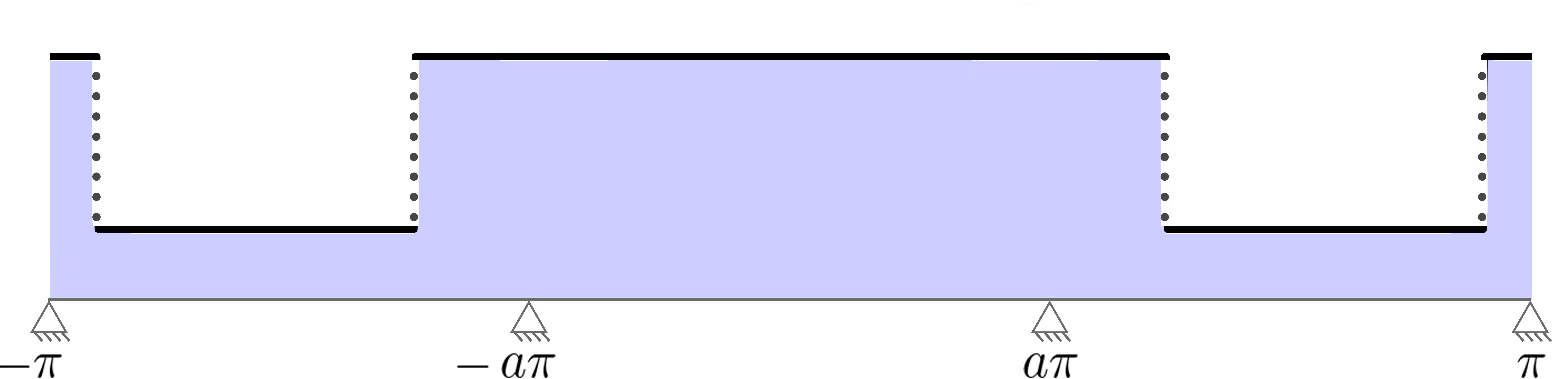}}&3.25$^\dag$&1& \parbox[c]{65mm}{\includegraphics[scale=0.018]{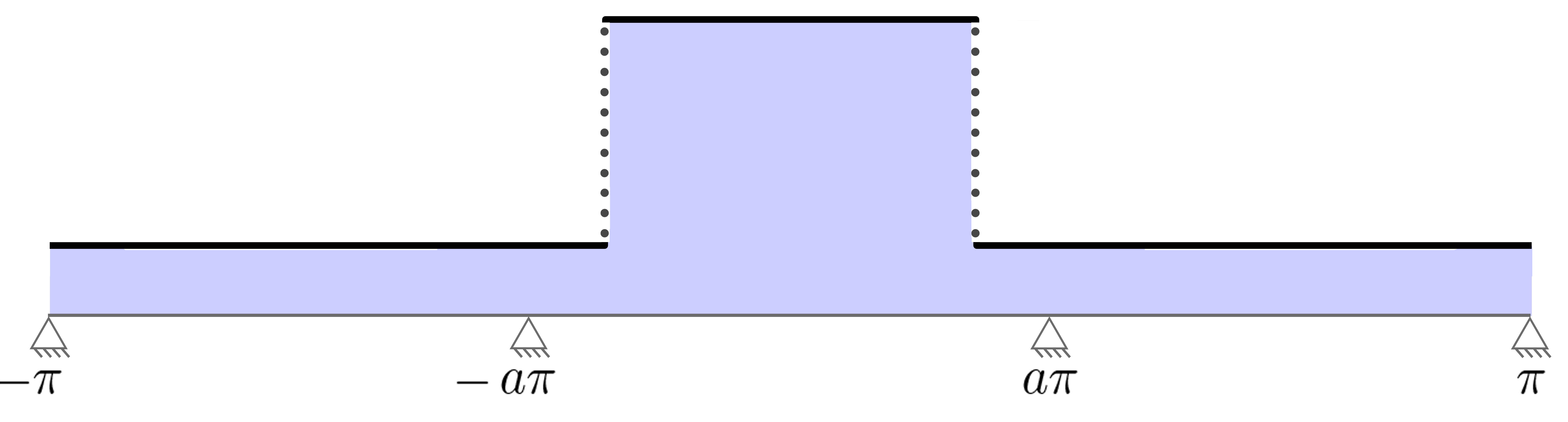}}
			\rule[2mm]{0mm}{0.8cm} \\
			0.40&4.66&2& \parbox[c]{65mm}{\includegraphics[scale=0.018]{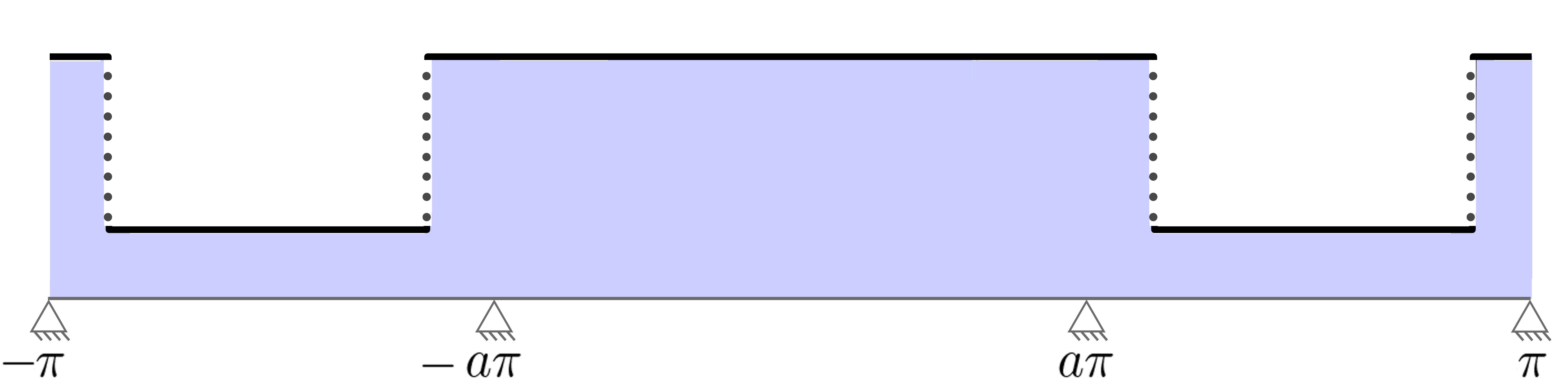}}&5.53$^\dag$&4& \parbox[c]{65mm}{\includegraphics[scale=0.018]{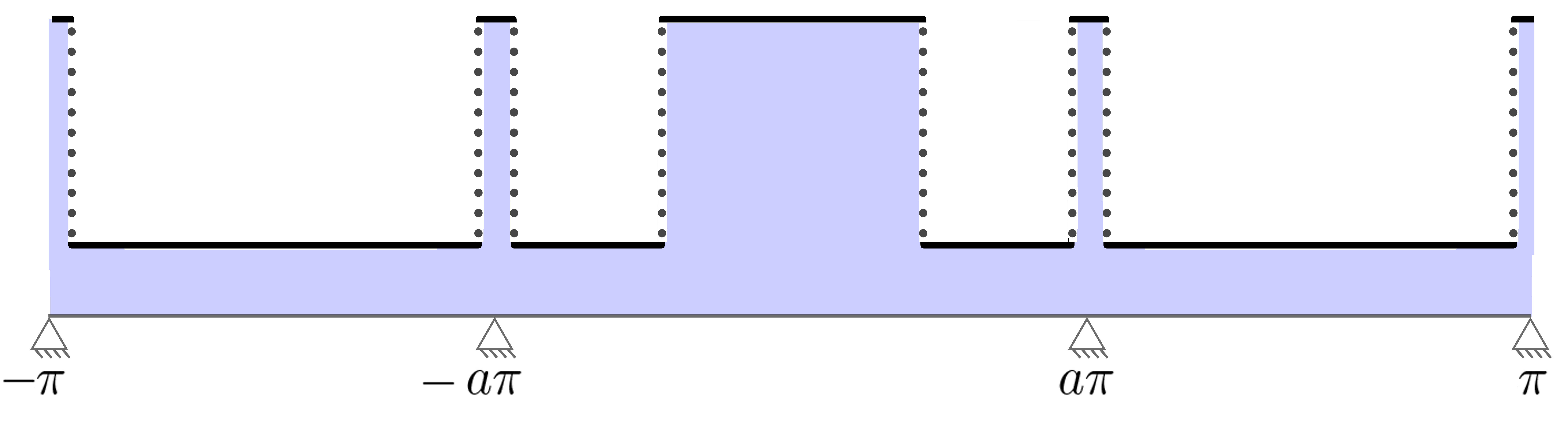}}
			\rule[2mm]{0mm}{0.8cm} \\
			0.45&4.79$^\dag$&4& \parbox[c]{65mm}{\includegraphics[scale=0.018]{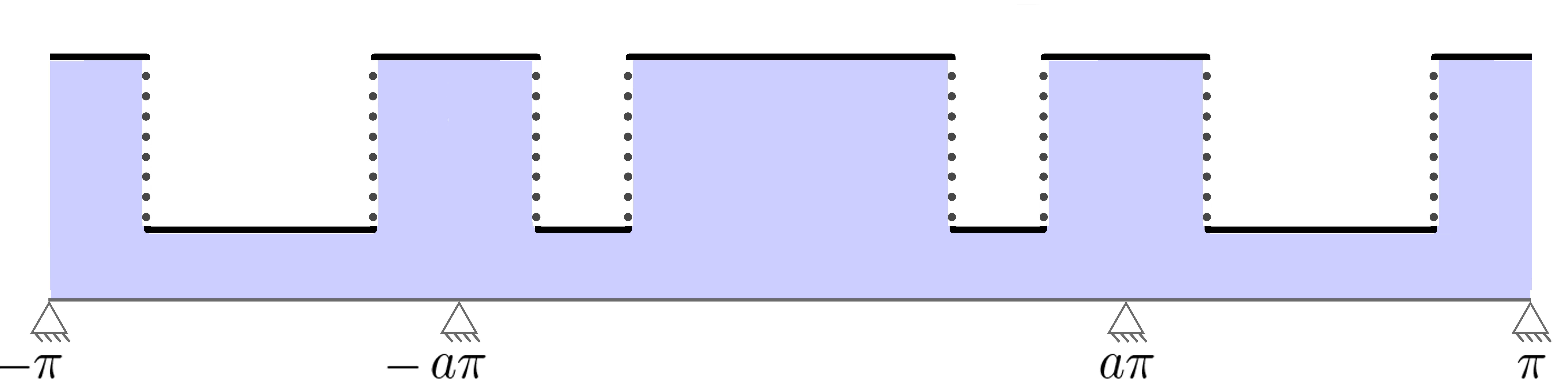}}&8.38$^\dag$&4& \parbox[c]{65mm}{\includegraphics[scale=0.018]{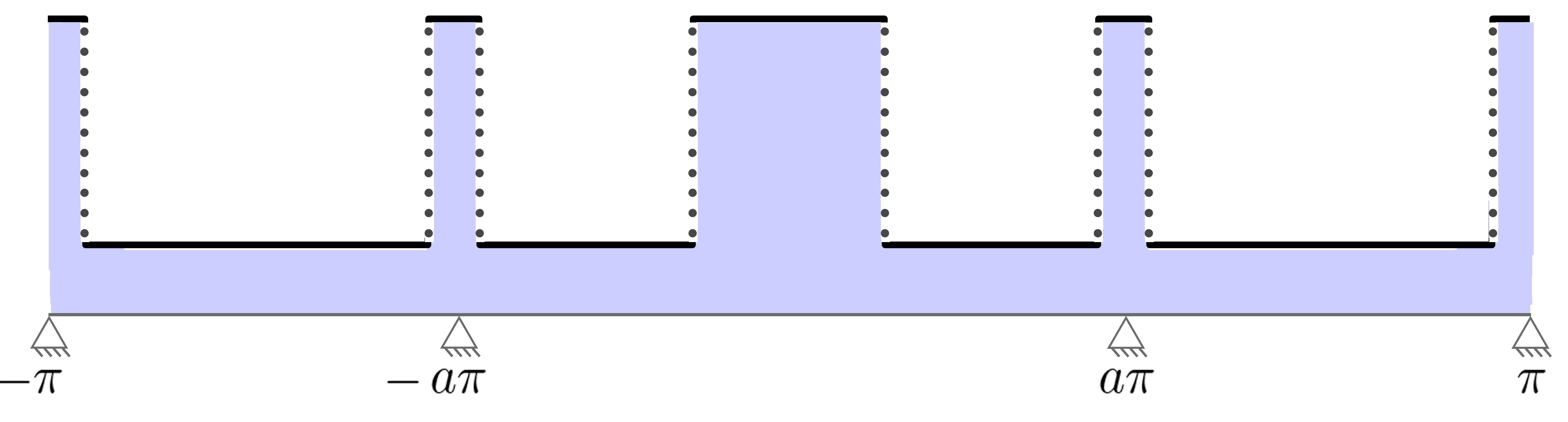}}
			\rule[2mm]{0mm}{0.8cm} \\
			0.50&4.73$^\dag$&4& \parbox[c]{65mm}{\includegraphics[scale=0.018]{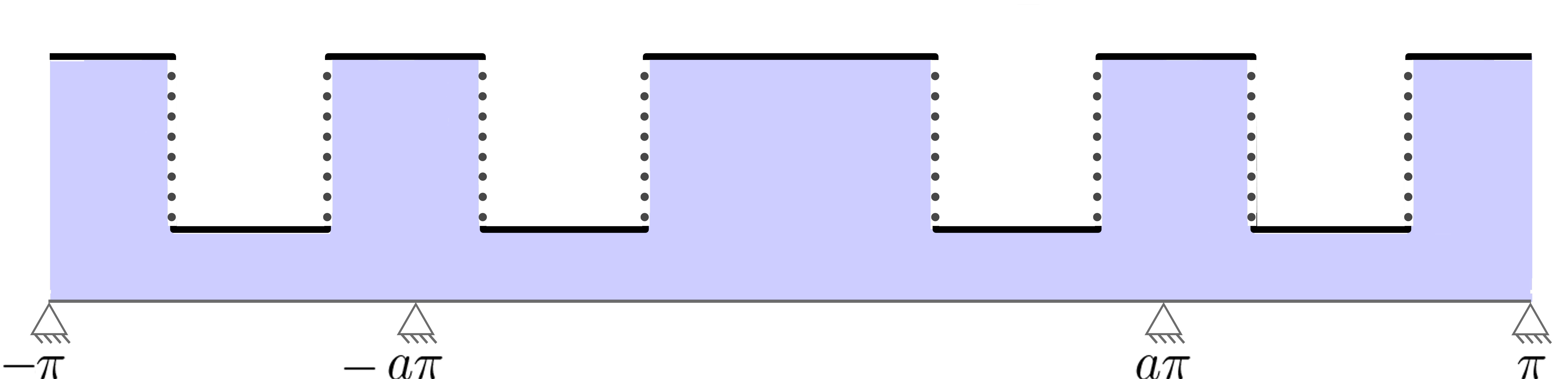}}&$1.22\cdot 10$&4& \parbox[c]{65mm}{\includegraphics[scale=0.018]{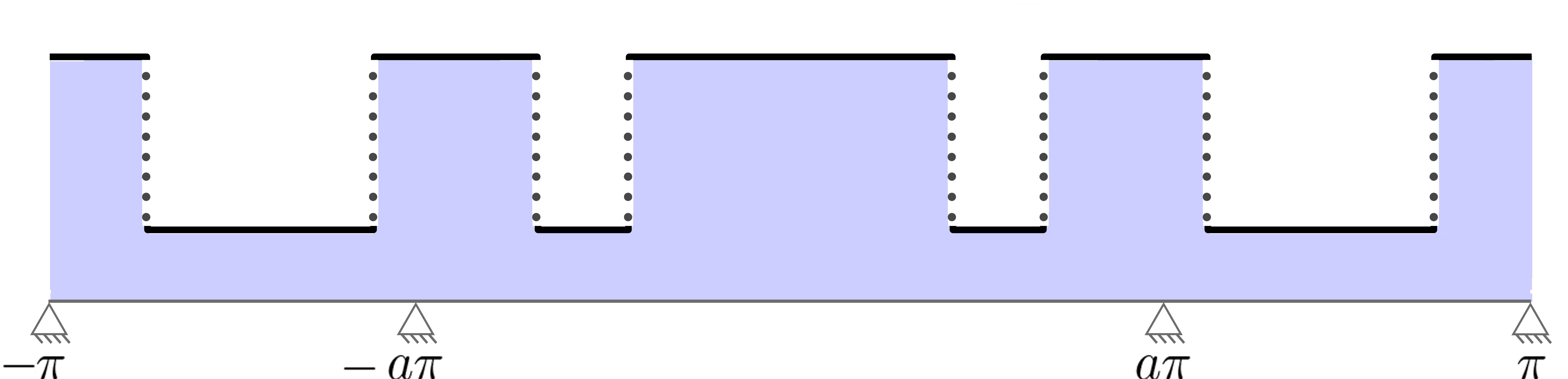}}
			\rule[2mm]{0mm}{0.8cm} \\
			0.55&5.06&4& \parbox[c]{65mm}{\includegraphics[scale=0.018]{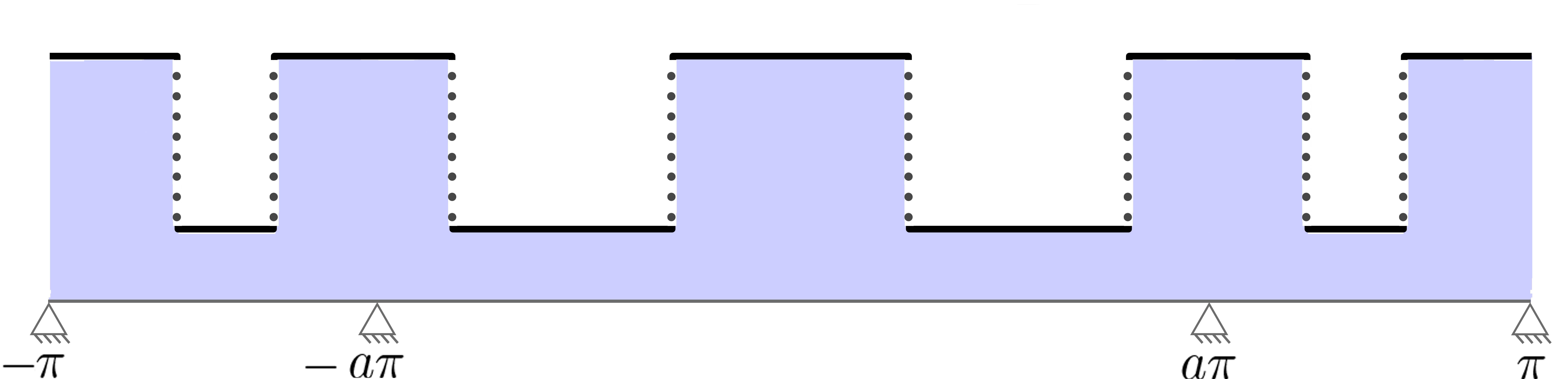}}&9.59&4& \parbox[c]{65mm}{\includegraphics[scale=0.018]{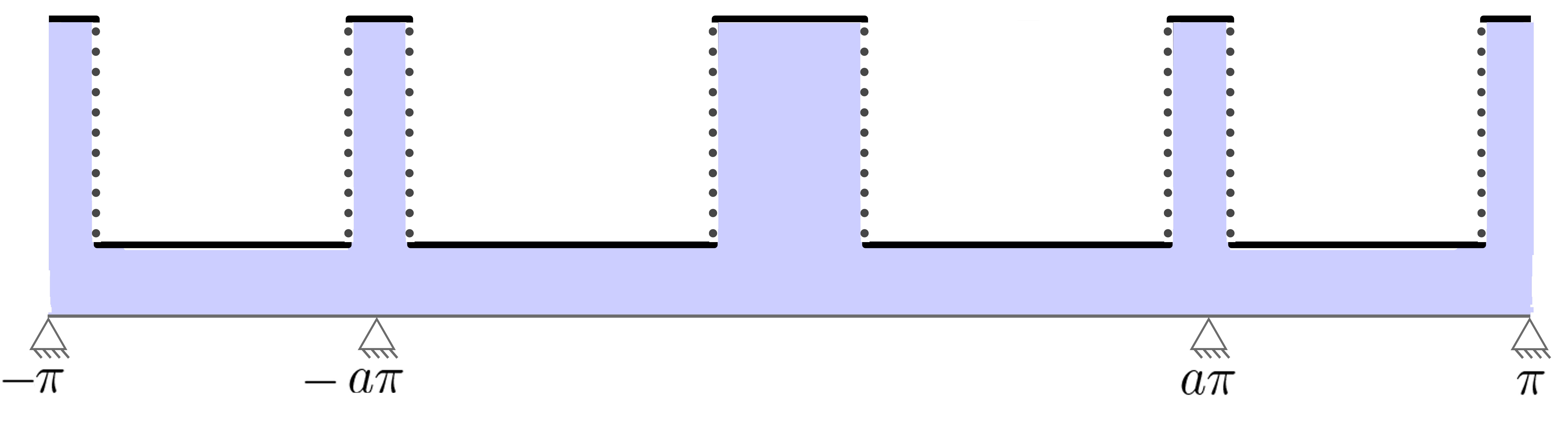}}
			\rule[2mm]{0mm}{0.8cm} \\
			0.60&4.42&4& \parbox[c]{65mm}{\includegraphics[scale=0.018]{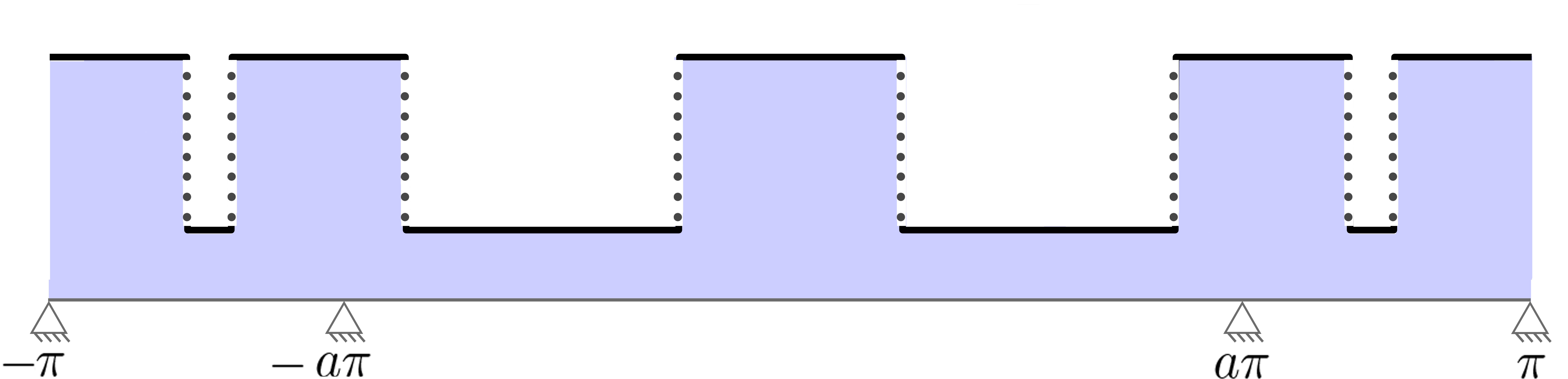}}&6.31&4& \parbox[c]{65mm}{\includegraphics[scale=0.018]{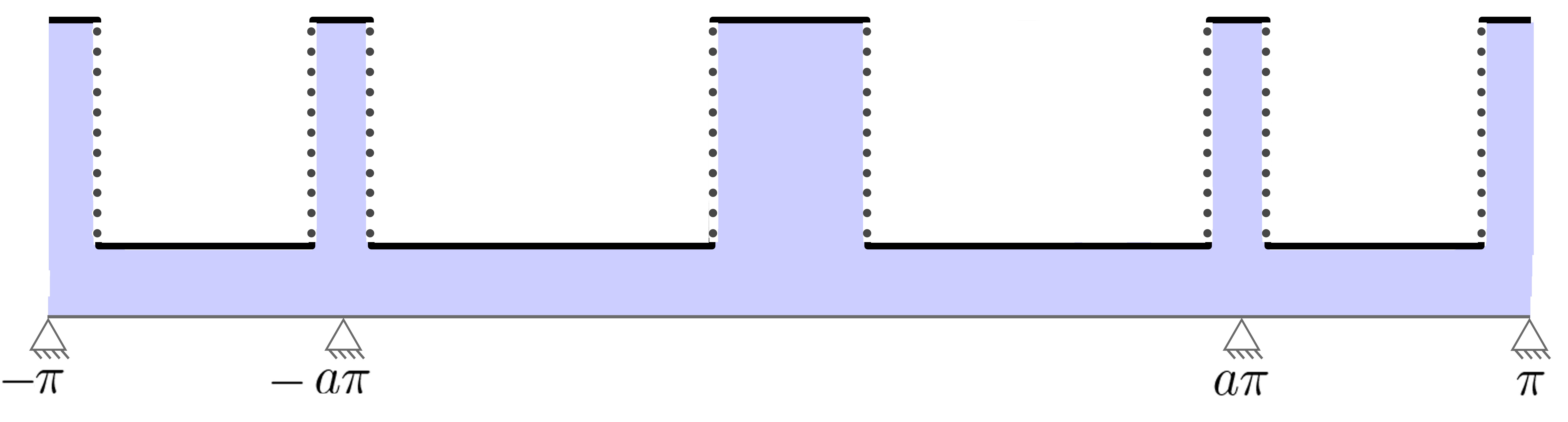}}
			\rule[2mm]{0mm}{0.8cm} \\
			0.65&3.41&2& \parbox[c]{65mm}{\includegraphics[scale=0.018]{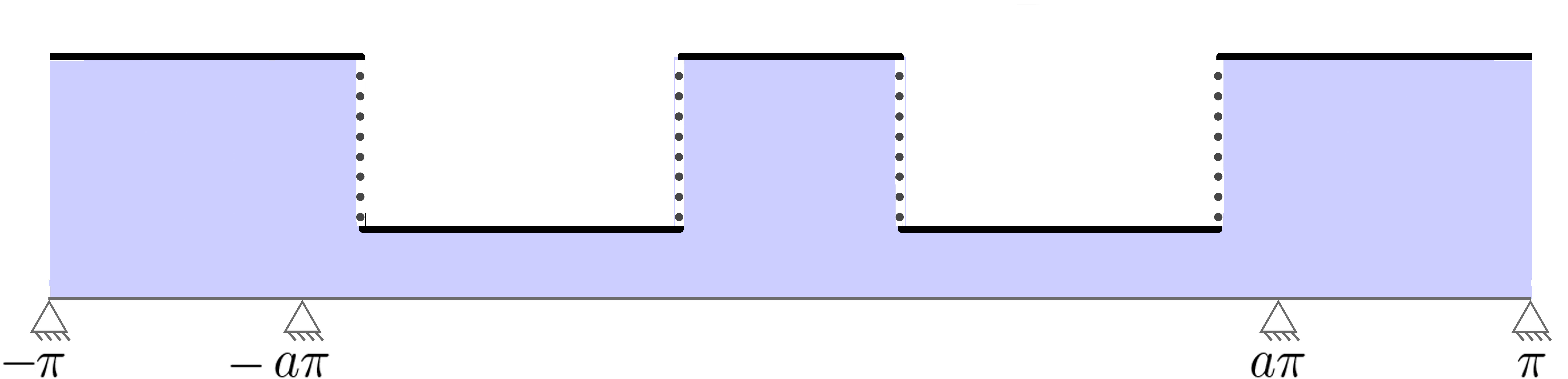}}&4.03&4& \parbox[c]{65mm}{\includegraphics[scale=0.018]{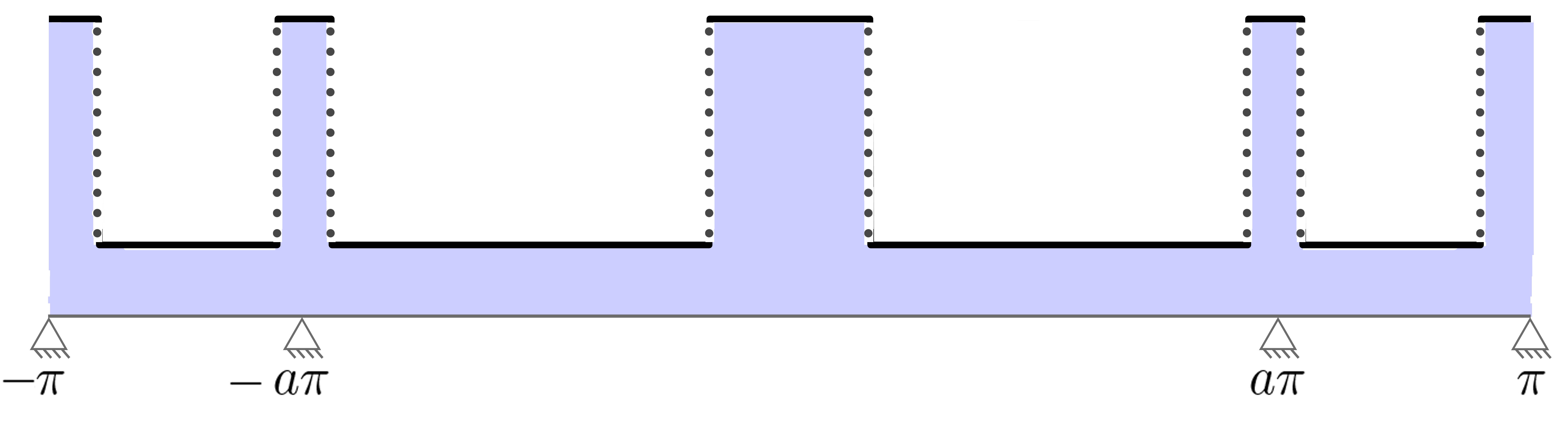}}
			\rule[2mm]{0mm}{0.8cm} \\
			0.70&2.16&2& \parbox[c]{65mm}{\includegraphics[scale=0.018]{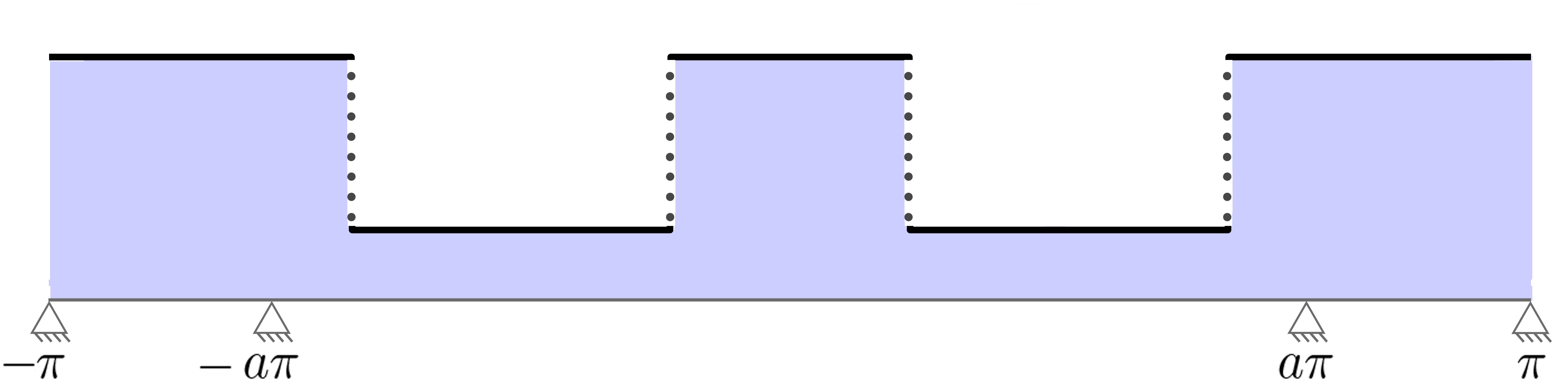}}&3.34&1& \parbox[c]{65mm}{\includegraphics[scale=0.018]{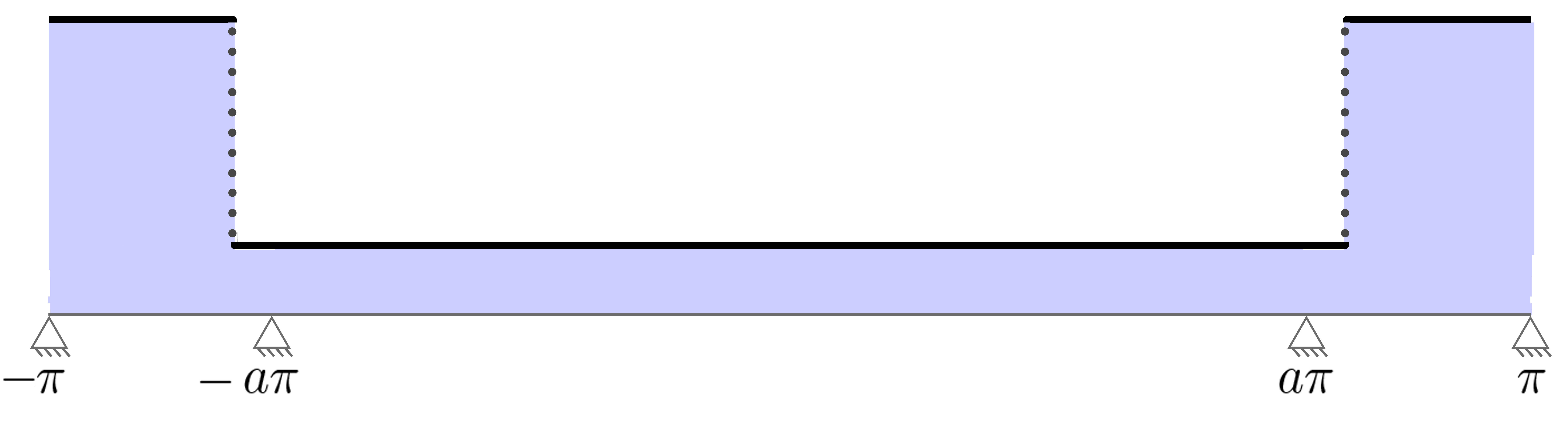}}
			\rule[2mm]{0mm}{0.8cm} \\
			0.80&$\dfrac{7.99}{10}$&2& \parbox[c]{65mm}{\includegraphics[scale=0.018]{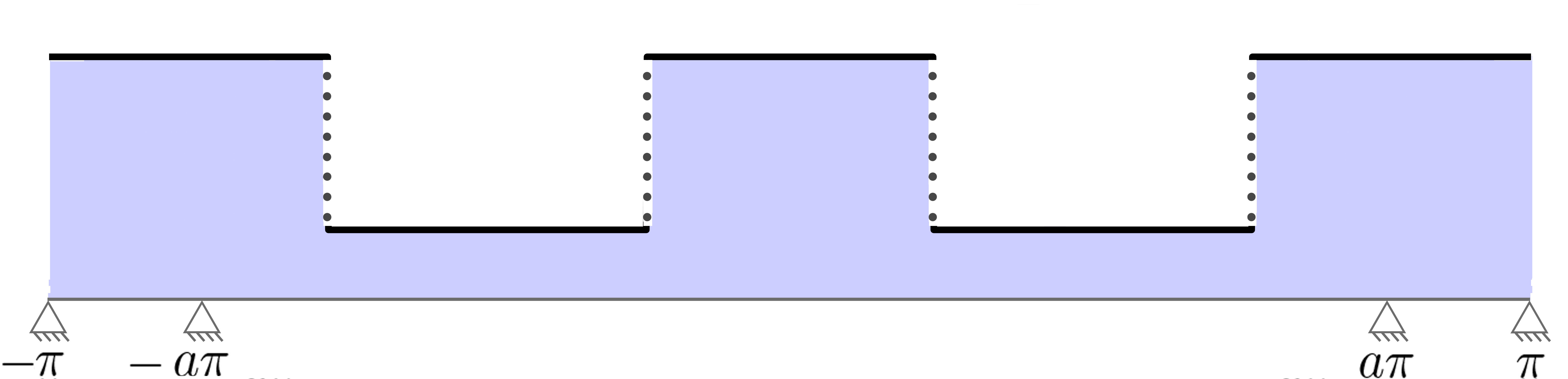}}&1.68&1& \parbox[c]{65mm}{\includegraphics[scale=0.018]{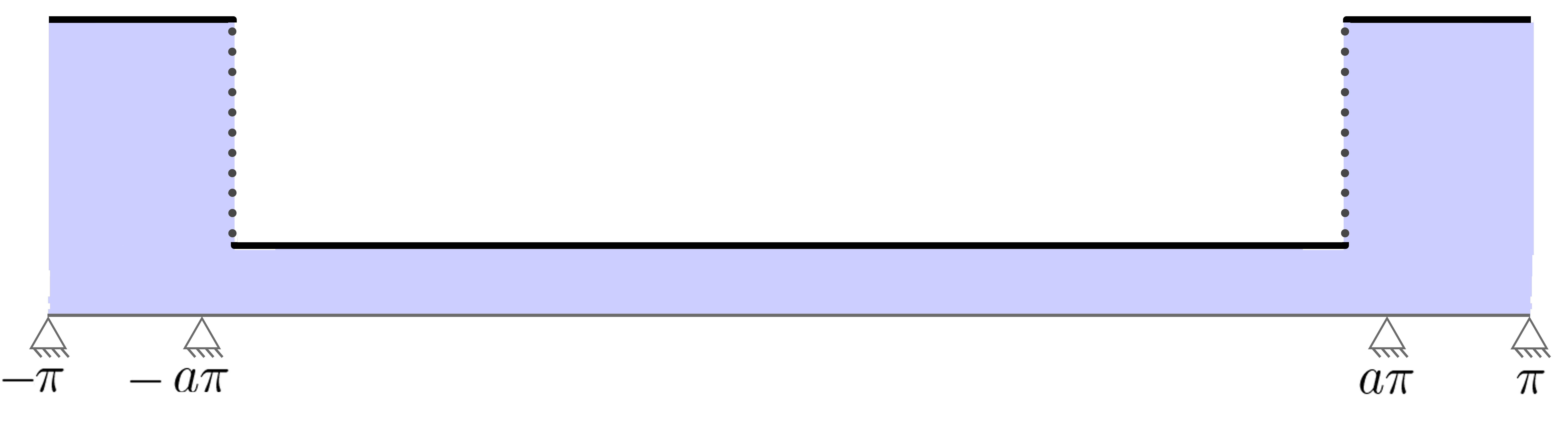}}
			\rule[2mm]{0mm}{0.8cm} \\
			0.90&$\dfrac{3.26}{10}$&2& \parbox[c]{65mm}{\includegraphics[scale=0.018]{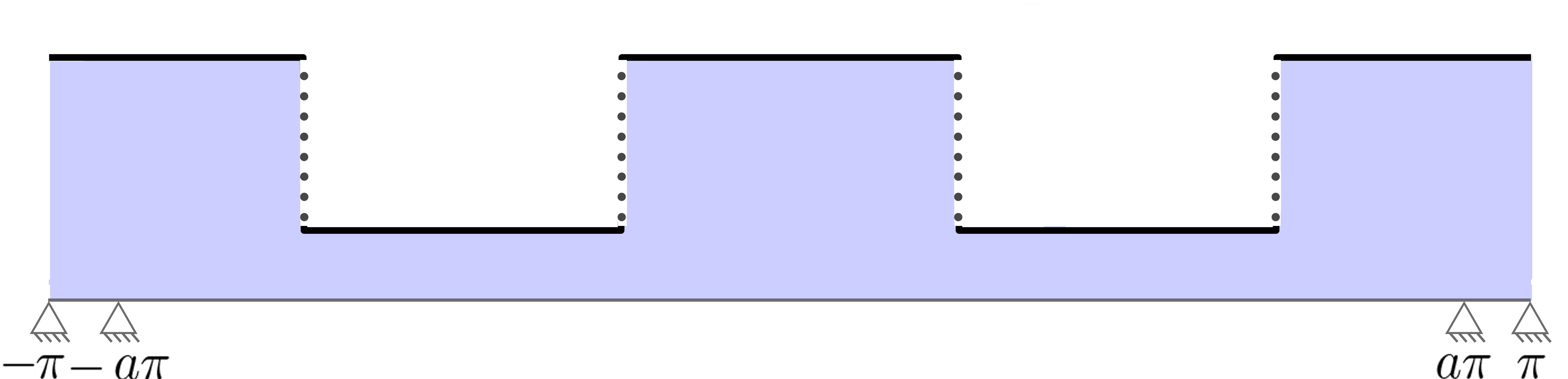}}&
			$\dfrac{7.37}{10}$&1& \parbox[c]{65mm}{\includegraphics[scale=0.018]{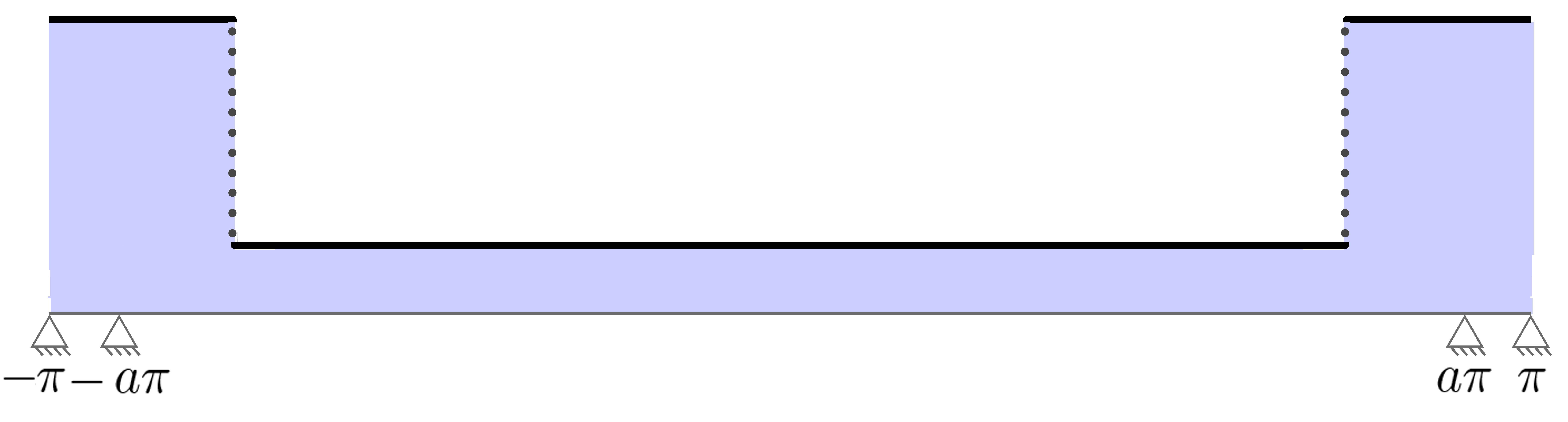}}
			\rule[2mm]{0mm}{0.8cm} \\
			\hline
	\end{tabular}} 

	\vspace{3mm}
	\caption{For different values of $\alpha$, $\beta$ and $a$, the numerical values of the critical energy threshold in \eqref{afix}, the graph of the corresponding piecewise constant maximizer $p^*(x)$ and the number $N^*$ of its discontinuities for $x > 0$. }
	\label{tab2}
\end{table}

\clearpage

\begin{table}[ht!]
	\begin{center}
		\resizebox{\textwidth}{!}{
			\begin{tabular}{|c|c|c|c|c||c|c|c|c||c|c|c|c||c|c|c|c||}
				\hline
				$\!\alpha\downarrow$ $\beta \to\!$  & \multicolumn{4}{c||}{3/2} & \multicolumn{4}{c||}{2} & \multicolumn{4}{c||}{5/2}& \multicolumn{4}{c||}{3}  \\
				\hline
				& $\mathcal{E}^*/10^2$ & $R$ & $N^*$ & $a_\text{opt}$ & $\mathcal{E}^*/10^2$ & $R$ & $N^*$ & $a_\text{opt}$ & $\mathcal{E}^*/10^2$ & $R$ & $N^*$ & $a_\text{opt}$& $\mathcal{E}^*/10^2$ & $R$ & $N^*$ & $a_\text{opt}$  \\
				\hline
				$5/6$ & 2.70 & $\lambda_2/\lambda_1$& 4 & \textbf{0.50} & 2.76 & $\lambda_2/\lambda_1$& 1 & 0.50 & 3.04 & $\lambda_2/\lambda_1$&1 & 0.50& 2.93 & $\lambda_3/\lambda_2$&1 & 0.50   \\
				\hline
				$2/3$ & 3.64 & $\lambda_2/\lambda_1$&4 & \textbf{0.50}  & 3.84 & $\lambda_2/\lambda_1$&4  & \textbf{0.50} &3.83 & $\lambda_2/\lambda_1$& 4& \textbf{0.50}&3.73 & $\lambda_2/\lambda_1$& 4& \textbf{0.50} \\
				\hline
				$1/2$  &4.54& $\lambda_2/\lambda_1$&4& \textbf{0.50} & 5.66& $\lambda_2/\lambda_1$&4 &\textbf{0.50} & 6.07& $\lambda_2/\lambda_1$&4& \textbf{0.50}& 6.13& $\lambda_2/\lambda_1$&4& \textbf{0.50}\\
				\hline
				$1/3$  & 5.06 & $\lambda_2/\lambda_1$&4& \textbf{0.55} & 8.39 & $\lambda_2/\lambda_1$&4 & \textbf{0.50} & 1.08$\cdot 10$ & $\lambda_2/\lambda_1$&4& \textbf{0.50}& 1.22$\cdot 10$ & $\lambda_2/\lambda_1$&4& \textbf{0.50}  \\
				\hline
			\end{tabular}
		}
		\smallbreak
		\caption{The optimal position $a_{\text{opt}}$ of the intermediate piers with respect to different $\alpha$ and $\beta$. $R$ is the ratio of eigenvalues corresponding to the critical energy threshold $\mathcal{E}^*$ and $N^*$ is the number of discontinuity points of the optimal weight $p^*$ for $x > 0$. In bold the situations with reinforced piers.}\label{tab3}
	\end{center}
\end{table}

\begin{figure}[h!]
	\centering
	{\includegraphics[width=12cm]{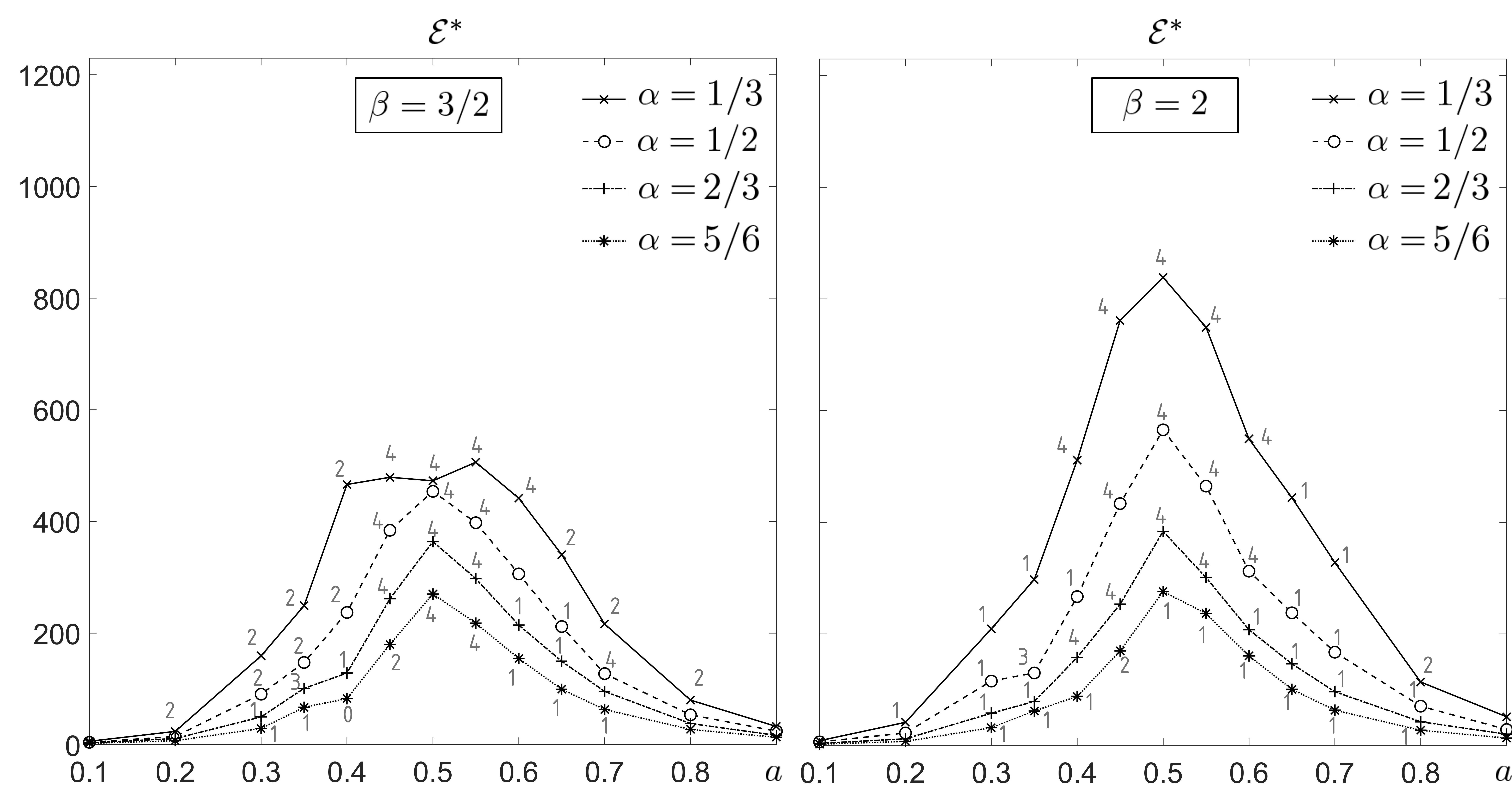}\\\includegraphics[width=12cm]{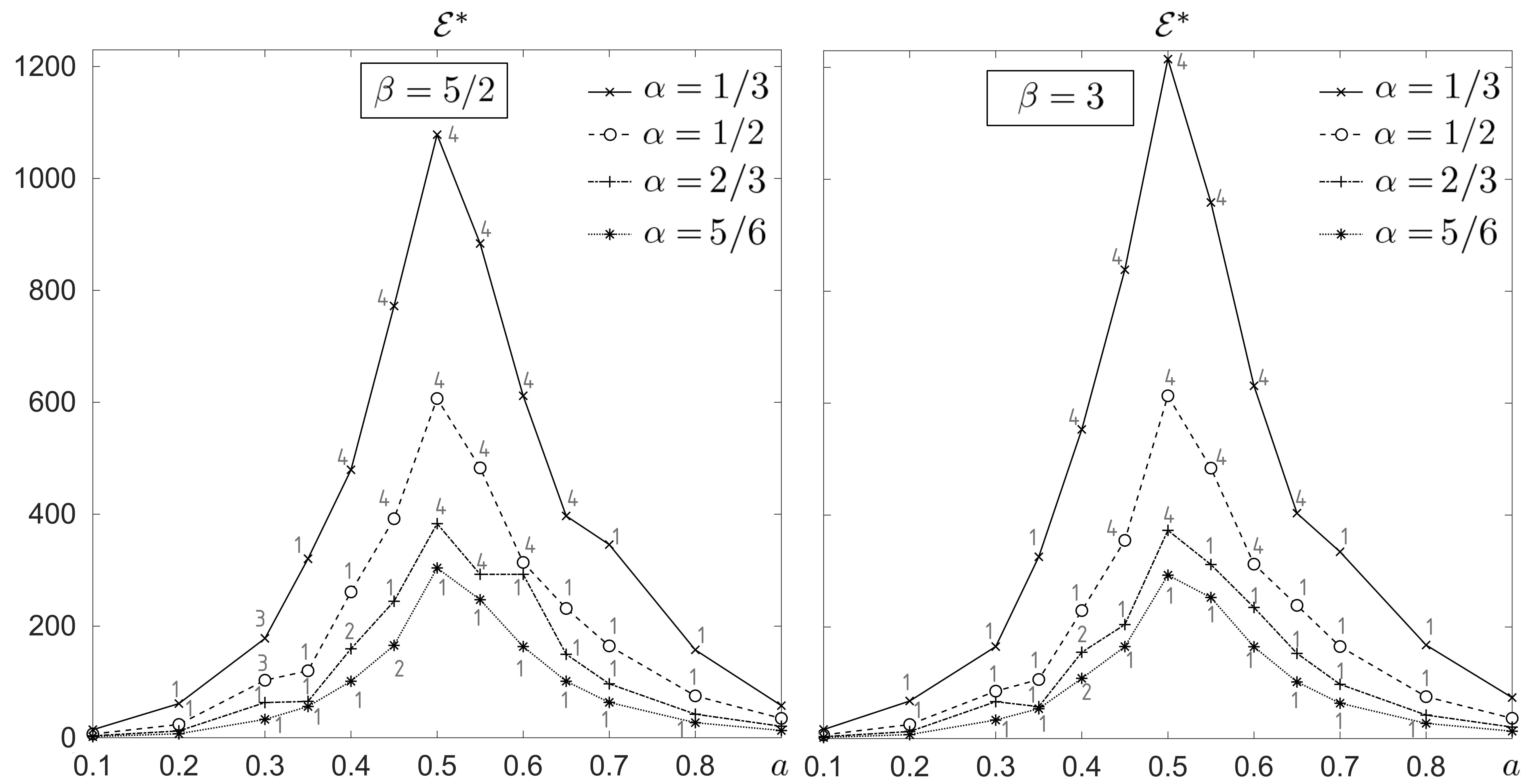}}
	\caption{ $\mathcal{E}^*$ versus $a$ with respect to different $\alpha$ and $\beta$. The numbers along the curves stand for the numbers $N^*$ of discontinuities of the corresponding $p^*(x)$ for $x > 0$. }
	\label{plotene}
\end{figure}

\subsection{Summary and conclusions.}\label{sumcon}

The stability analysis provided in \cite[Chapter 3]{GarGazBK} in the case of a \emph{homogeneous} beam ($p \equiv 1$) suggested an optimal position of the piers, given by $a=0.5$. We have here performed a similar stability analysis for a non homogeneous beam made of two different materials; as a first outcome (Claim a) at the end of Section \ref{2step} and Claim a) at the end of Section 5.2), it turns out that
\begin{center}
\textbf{introducing different densities along the beam reinforces the structure},
\end{center}
namely provides a stabler beam. We are then motivated to search an optimal combination ``position of the piers-density'' which increases as much as possible the energy thresholds of instability, maintaining however the ratio between the densities of the materials coherent with the one of two realistic building materials. Moreover, from the point of view of the applications, the number of jumps between the two materials along the beam has to be controlled, in order to make the building of the beam feasible. We have then proceeded as follows: 
\begin{itemize}
\item[1)] in Section \ref{2step}, we have constrained the total number of jumps to two; in this case, the eigenvalues are obtainable explicitly. We have found the best $a$ for each fixed density in a certain range and then we have maximized the obtained threshold as a function of the considered densities; 
\item[2)] in Section \ref{num}, for fixed $a$, we have looked for densities maximizing the ratio of ``dangerous'' couples of eigenvalues, accordingly obtaining optimal densities which, in general, have - at least in the physical range \eqref{physrangea} - more than two total jumps. For each $a$, we have then checked whether such densities give rise to higher critical energy thresholds with respect to those obtained for two-step densities; finally, we have maximized the obtained instability thresholds with respect to $a$, finding the optimal combination  ``position of the piers-density'' among those considered.
\end{itemize}
We have observed the following:
\begin{itemize}
\item[1)] in the first case, the optimal position of the piers always ranges between $0.35$ and $0.65$ and the best combination ``position of the piers-density'' is given by $a=0.65$ and $\alpha=1/3$, $\beta=2$, with lighter density in the middle of the beam and \emph{both the piers and the endpoints of the beam reinforced}; %\blu dire quali materiali sarebbero? ``titanium VS bricks''? \nero
\item[2)] in the second case, the highest thresholds are obtained in a similar range for $a$ ($0.40$-$0.60$), with densities having more than two total jumps, for which the heavier material is in general present \emph{around the piers} and \emph{at the endpoints} of the beam. The optimal combination ``position of the piers-density'' is here given by $a=0.50$ and $\alpha=1/3$, $\beta=3$. This choice reaches the highest observed energy threshold in our experiments, and is thus optimal among the considered ones. Just to give a rough idea, normalizing the density of reinforced concrete to $1$, the two corresponding materials could be, for instance, wood and steel. 
\end{itemize}
We then conclude that 
\begin{center}
\textbf{the physical range \eqref{physrangea} contains the optimal choices of the position of the piers, \\ both for two-step densities and for general ones;}
\end{center}
however, 
\begin{center}
\textbf{in the physical range \eqref{physrangea} it is generally better to choose densities with more than two total jumps, with $\alpha$ and $\beta$ sufficiently different one from the other}.
\end{center}
Moreover, in correspondence of the optimal choice of $a$, 
\begin{center}
\textbf{the denser material has always to be present around the piers \\ and next to the endpoints of the beam.}
\end{center}

These results may be the starting point for further investigations involving physical models for real structures like bridges (see, e.g. \cite{elvisefilippo} and \cite{bookgaz}), with the aim of testing the effectiveness of the above conclusions in preventing possible failures.

\section{Proofs}\label{dimostrazioni}
%\label{proof} 

In the following, we denote by $\mathbb{N}_+=\{1, 2, 3, \ldots\}$ the set of positive integers.

 \subsection{Proof of Theorem \ref{Galerkin}}
 %\label{proofG}

 Let $\{e_j\}_{j\in \N_+}$ be the set of eigenfunctions of \eqref{beam0}, with corresponding eigenvalues $\lambda_j$. We observe that $\{e_j\}_j$ is a complete system, both in $V(I)$ and in $L^2_p(I)$; we normalize each $e_j$ in such a way that $ \|e_j\|_{L_p^2(I)}^2=1$ for all $j\in \N_+$. Then, for a given $n\in \N_+$, we define
\begin{equation*}
%\label{un}
u^n(x,t)=\sum_{j=1}^{n} c_j(t) e_j(x)\,,
\end{equation*}
where the coefficients $c_j \in C^2([0, +\infty))$ satisfy the system of ODE's
\begin{equation} \label{coeff system}
\ddot{c}_j(t) +\lambda_j c_j(t) + \bigg(\sum_{k=1}^{n} c_k^2(t) \bigg)\,c_j(t)=0\,
\end{equation}
for $t>0$ and $j=1,...,n$. Moreover, writing the expansions of the initial data as
$$
g(x)=\sum_{j=1}^\infty g^j e_j(x)\quad\text{in }V(I),\qquad h(x)=\sum_{j=1}^\infty h^je_j(x)\quad\text{in }L^2_p(I),
$$
we set %assume that, for every $1\leq j\leq n$, the $c_j$'s satisfy the initial conditions
\begin{equation} \label{initial condition}
c_j(0)= g^j\,, \quad \dot{c}_j(0)= h^j\,, \qquad \text{for every } 1\leq j\leq n.
\end{equation}
The existence of a unique local solution to \eq{coeff system}-\eq{initial condition} in some maximal interval of continuation
$[0, \tau_n)$, $\tau_n>0$, follows from standard theory of ODE's. Then, for each $j=1, \ldots, n$ we multiply equation \eq{coeff system} by $\dot{c}_j(t)$ and we sum the so obtained equations, getting 
$$
\frac{d}{d\,t} \bigg[\frac{1}{2}\sum_{j=1}^{n} (\dot{c}_j(t))^2+\frac{1}{2} \sum_{j=1}^{n} \lambda_j\, c^2_j(t)+\frac{1}{4}\big(\sum_{j=1}^{n}\, c^2_j(t) \big)^2\bigg ]=0\,.
$$
Integrating this equality on $(0,t)$ and recalling the definition of $u^n$, we conclude that
\begin{equation}\label{uniform bound}
2\|\dot{u}^n(t)\|_{L_p^2(I)}^2+2\|u_{xx}^n(t)\|_2^2+\|u^n(t)\|_{L_p^2(I)}^4=2\|h^n\|_{L_p^2(I)}^2+2\|g_{xx}^n\|_2^2+\|g^n\|_{L_p^2(I)}^4 \leq C 
\end{equation}
for any $t\in[0,\tau_n)$ and every $n\geq 1$, where the constant $C$ is independent of $n$ and $t$.  Hence, $u^n$ is globally defined in $\R_+$ for every $n \geq 1$ and the sequence $\{u^n\}_n$ is uniformly bounded in the space $C^0([0,T];V(I))\cap C^1([0,T];L_p^2(I))$, for all finite $T>0$. We now show that $\{u^n\}_n$ admits a strongly convergent subsequence in the same spaces.\par
The estimate \eq{uniform bound} shows that $\{u^n\}_n$ is bounded and equicontinuous in $C^0([0,T];L^2_p(I))$. By the Ascoli-Arzel\`a Theorem
we then conclude that, up to a subsequence, $u^n\rightarrow u$ strongly in $C^0([0,T];L^2_p(I))$.
\par
Next, for every $n>m\ge1$, we set $u^{n,m}:=u^n-u^m$, $g^{n,m}:=g^n-g^m$ and $h^{n,m}:=h^n-h^m$. Repeating the computations which yield \eq{uniform bound} for $u^m$ and subtracting the obtained inequality from \eqref{uniform bound}, for all $t\in[0,T]$, we get
 \begin{equation*}
\begin{split}
&2\|\dot{u}^{n,m}(t)\|_{L^2_p(I)}^2+2\|u_{xx}^{n,m}(t)\|_2^2=-\left(\|u^n(t)\|_{L_p^2(I)}^2+\|u^m(t)\|_{L_p^2(I)}^2 \right)\left( \|u^n(t)\|_{L_p^2(I)}^2-\|u^m(t)\|_{L_p^2(I)}^2\right)\\
&+2\|h^{n,m}\|_{L_p^2(I)}^2+2\|g^{n,m}_{xx}\|_2^2+\left(\|g^n\|_{L_p^2(I)}^2+\|g^m\|_{L_p^2(I)}^2 \right)\left( \|g^n\|_{L_p^2(I)}^2-\|g^m\|_{L_p^2(I)}^2\right)\to0\qquad\mbox{as }n,m\to\infty,
\end{split}
\end{equation*}
so that $\{u^n\}_n$ is a Cauchy sequence in the space $C^0([0,T];V(I))\cap C^1([0,T];L_p^2(I))$. In turn this yields
$$ u^n\rightarrow u\quad  \text{ in } C^0([0,T];V(I))\cap C^1([0,T];L_p^2(I)) \quad \text{as }n\rightarrow +\infty\,.$$
\par 
We now show that $u$ satisfies equation \eq{beam_weak}.
We rewrite \eq{coeff system} as
 \begin{equation}\label{weakode}
 \int_0^T \bigg\{-\dot{c}_j(s)\chi'(s)+\lambda_j c_j(s) \chi(s)+ \bigg(\sum_{k=1}^{n} c_k^2(t) \bigg)\,c_j(s) \chi(s)\, \bigg\}\, ds=0,
   \end{equation}
for all $\chi \in \mathcal{D}(0, T)$. Moreover, for fixed $\varphi\in V(I)$ we denote by $\varphi^n(x)=\sum_{j=0}^{n} d_j e_j(x)$ the
orthogonal projection of $\varphi$ onto $X_n:={\rm span}\{e_j(x)\}_{j=1}^n$. Multiplying \eq{weakode} by $d_j$ and summing with respect to $j=1,...n$, we obtain
 \begin{equation*}
\int_0^T \bigg\{-(u^n_t(s),\varphi^n(x)\,\chi'(s))_p+(u^n_{xx}(s),\varphi^n_{xx}(x)\, \chi(s))+ \|u^n\|_{L_p^2(I)}^2\,(u^n(s),\varphi^n(x)\, \chi(s))_p\, \bigg\}\, ds=0
 \end{equation*}
   for all $\varphi \in  V(I)$ and $\chi \in \mathcal{D}(0, T)$.
By letting $n\rightarrow +\infty$ in the above equation, we conclude that $u$ satisfies \eq{beam_weak}. The verification of the initial conditions follows by noticing that $u^n(x,0) \rightarrow u(x,0)$ in $V(I)$ and $u_t^n(x,0) \rightarrow u_t(x,0)$ in $L^2_p(I)$. The proof of the existence part is complete, once we
observe that all the above results hold for any $T>0$.\par
For the uniqueness proof, we first show that \eq{energy} holds for any solution to \eq{beam_weak} (and not only for the limit solution obtained above). Indeed, let $u\in C^0([0,T]; V(I))\cap C^1(([0,T]; L_p^2(I))$ satisfy \eq{beam_weak}; we may write its expansion as $u(x,t)=\sum_{j=1}^{\infty} c_j(t) e_j(x)$, which converges in $V(I)$ and in $L^2_p(I)$. By testing \eq{beam_weak} with $\phi(x)=e_j(x)$, we readily get that each coefficient $c_j$ satisfies the equation
$$
 \ddot{c}_j(s)+ \lambda_j c_j(s) + \|u(s)\|_{L_p^2(I)}^2\, c_j(s)=0\,.
 $$
Multiplying this equality by $\dot{c}_j(t)$ and summing the obtained equations with respect to $j=1, \ldots, n$, we get
$$\frac{d}{d\,t} \left(\|\dot{u}^n(t)\|_{L_p^2(I)}^2+ \|u_{xx}^n(t)\|_2^2+2 \int_0^t   \|u(s)\|_{L_p^2(I)}^2\,(u^n(s),u_t^n(s))_p\, ds \right)=0$$
and, passing to the limit, we obtain
$$\|\dot{u}(t)\|_{L_p^2(I)}^2+ \|u_{xx}(t)\|_2^2+2 \int_0^t   \|u(s)\|_{L_p^2(I)}^2\,(u(s),u_t(s))_p\, ds=\|h\|_{L_p^2(I)}^2+ \|g_{xx}\|_2^2\,.$$
Finally, \eq{energy} follows from the fact that
$$ 
\int_0^t   \|u(s)\|_{L_p^2(I)}^2\,(u(s),u_t(s))_p\,ds=\frac{1}{4}(\|u(t)\|_{L_p^2(I)}^4-\|u(0)\|_{L_p^2(I)}^4),
$$ 
which can be proved by noticing that
$ \|u^n(s)\|_{L_p^2(I)}^2\,(u^n(s),u^n_t(s))_p= \frac{1}{4}\frac{d}{d\,t} \|u^n(s)\|_{L_p^2(I)}^4$
and letting $n\rightarrow +\infty$.
\par
We are now ready to complete the proof of the uniqueness. We adapt to our framework the idea of \cite[Theorem 11]{holubova2}. Let $v_1$ and $v_2$ be two solutions of \eq{beam_weak}; we set $u=v_1-v_2$ and
$$\overline u(x,t)=\int_0^tu(x,\tau)\, d\tau,\qquad w(x,t,\sigma)=-\int_{t}^{\sigma}u(x,\tau)\, d\tau,$$
for all $t, \sigma \in [0, T]$. Then $w(x,t,\sigma)=\overline u(x,t)-\overline u(x,\sigma)$ and $w_t(x,t,\sigma)=u(x,t)$.
From Remark \ref{regw}, $u$ satisfies
\begin{equation*}
\langle p(x) u_{tt}, \varphi(x) \rangle_V+ (u_{xx},\varphi_{xx}(x)) + (\Vert u \Vert^2_{L^2_p(I)} u, \varphi(x))_p = 0 
\quad \forall \varphi \in V(I), \, \forall t > 0.
%+\left(\Big( \|v_1(s)\|_{L_p^2(I)}^2\,v_1(x,s)- \|v_2(s)\|_{L_p^2(I)}^2\,v_2(x,s)\Big),\varphi(x)\right)_p
\end{equation*}
%for all $\varphi \in V(I)$ and for almost every $s \in [0, T]$. 
Since $w(x,t,T) \in C^1([0, T]; V(I))$, we may write the above identity with $\varphi=w$ as test function; then, integrating with respect to time, we obtain 
\begin{equation*}
\begin{split}
& \int_0^T (u_t(x,s),u(x,s))_p\, \, ds-  \int_0^T (w_{txx}(x,s,T),w_{xx}(x,s,T))\, ds \\
&=\int_0^T \left( \left( \|v_1(s)\|_{L_p^2(I)}^2\,v_1(x,s)- \|v_2(s)\|_{L_p^2(I)}^2\,v_2(x,s) \right),w(x,s,T)\right)_p\, ds\, 
\end{split}
\end{equation*}
(notice that, in order to write the first summand in this way, we have performed an integration by parts relying on the fact that $u_t(x, 0)=0$ and $w(x, T, T)=0$).
By noticing that
$ (u_t(s),u(s))_p=\frac{1}{2} \frac{d}{d\,s }  \|u(s)\|_{L_p^2(I)}^2$, $ (w_{txx}(s,T),w_{xx}(s,T))=\frac{1}{2} \frac{d}{d\,s } \|w_{xx}(s,T)\|_{L^2(I)}^2$ and $w_{xx}(x,0,T)=\overline u_{xx}(x,T)$, we thus deduce that
\begin{equation*}
\begin{split}
&\frac{1}{2}\, \|u(x,T)\|_{L_p^2(I)}^2+\frac{1}{2}\, \|\overline u_{xx}(x,T)\|_{L^2(I)}^2 \\
&\leq \sup_{s\in[0,T]} \|v_1(s)\|_{L_p^2(I)}^2  \int_0^T \left(u(x,s),w(x,s,T)\right)_p\, ds\\
&+ \sup_{s\in[0,T]} \|v_2(s)\|_{L^{\infty}(I)} \, \left[\|v_1(s)\|_{L^{\infty}(I)}+\|v_2(s)\|_{L^{\infty}(I)} \right]  \int_0^T \left( u(x,s),w(x,s,T)\right)_p\, ds\\
& \leq C  \int_0^T \left( \|u (x,s)\|_{L_p^2(I)}\,  \|\overline u(x,s)\|_{L_p^2(I)}+ \|u(x,s)\|_{L_p^2(I)}\, \|\overline u(x,T)\|_{L_p^2(I)}\right) \, ds\,,
\end{split}
\end{equation*}
for some $C>0$, where we have exploited \eq{beam_weak}, H\"older and Minkowski inequalities and the fact that $w(x,s,T)=\overline u(x,s)-\overline u(x,T)$. Finally, from Young inequality and the continuous embedding $V(I) \subset L^2_p(I)$ we conclude that
$$ \|u(x,T)\|_{L_p^2(I)}^2+ \|\overline u_{xx}(x,T)\|_{L^2(I)}^2  \leq K  \int_0^T \|u(x,s)\|_{L_p^2(I)}^2+ \|\overline u_{xx}(x,s)\|_{L^2(I)}^2 \, ds$$
for some $K>0$. Therefore, by Gronwall lemma we conclude that $u\equiv 0$, which is the thesis.

 \subsection{Proof of Proposition \ref{general facts}}
 %\label{simple} %{Proofs of Section \ref{weighteigenpb}}

 For $h\in\mathbb{N}_+$, we denote respectively by $\lambda_h(p)$ and $e_h(x)$ the $h$-th eigenvalue and the $h$-th eigenfunction of \eqref{beam0}.
 We recall the following variational representation of eigenvalues for every $h\in \N_+$, see e.g.  \cite{courant,henrot}:
 \begin{equation}
 \label{caract1}
 \lambda_h(p)=\inf_{\substack{W_h\subset V\\\dim W_h=h}}\hspace{2mm}\sup_{\substack{v\in W_h\setminus\{0\}}}\frac{\|v\|^2_{V}}{\|\sqrt{p}v\|^2_2}.
 \end{equation} 
 When $h=1$, \eqref{caract1} includes the well known characterization of the first eigenvalue
 \begin{equation*}
 %\label{first}
 \lambda_1(p)=\inf_{\substack{v\in V\setminus\{0\}}}\frac{\|v\|^2_{V}}{\|\sqrt{p}v\|^2_2}.
 \end{equation*} 
 First, we prove the following lemma.
 \begin{lemma}\label{nehari}
 	Let $e$ be a weak solution of \eqref{beam0} which satisfies
 	\begin{equation}\label{ci}
 	\begin{cases}
 	e'(-\pi)=0\\
 	e'''(-\pi)=A>0
 	\end{cases}
	\qquad \text{or} \qquad 
	\begin{cases}
 	e'(-\pi)=A>0\\
 	e'''(-\pi)=0\,.
 	\end{cases}
 	\end{equation}
 	Then 
 	$
 	e(x)>0$ for all $x\in(-\pi,-a\pi]$.
 \end{lemma}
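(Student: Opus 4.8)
The plan is to run a pointwise bootstrap on the first span $I_-=(-\pi,-a\pi)$, treating $e$ as the solution of the initial value problem for the ODE $e''''=\lambda\,p\,e$ determined by the left hinged conditions $e(-\pi)=e''(-\pi)=0$ together with the Cauchy data in \eqref{ci}; note that we do \emph{not} use the interface condition $e(-a\pi)=0$, so that the closed endpoint in the conclusion is meaningful. Two facts make the right-hand side of the equation tame: first, since $e$ solves \eqref{beam0} and is nontrivial (because $A>0$), $\lambda$ is an eigenvalue and the Rayleigh characterization \eqref{caract1} gives $\lambda\ge\lambda_1(p)>0$; second, $p\ge\alpha>0$ a.e. Hence, wherever $e>0$, one has $e''''=\lambda\,p\,e>0$ a.e. By Proposition \ref{regolarita} we have $e\in C^3(\overline{I_-})$ with $e''''=\lambda p e$ holding a.e., so $e,e',e'',e'''$ are continuous on $[-\pi,-a\pi]$ and may be recovered by integrating the equation.

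First I would record that $e>0$ immediately to the right of $-\pi$. Indeed, the Cauchy data give $e(-\pi)=e''(-\pi)=0$ and, by \eqref{ci}, either $e'(-\pi)=0,\ e'''(-\pi)=A>0$ or $e'(-\pi)=A>0,\ e'''(-\pi)=0$; in both cases the lowest-order non-vanishing term of the Taylor expansion of $e$ at $-\pi$ is positive (of order $(x+\pi)^3$ in the first case, of order $(x+\pi)$ in the second), so $e$ is positive on a right neighbourhood of $-\pi$. Then I would argue by contradiction: if $e$ vanished somewhere in $(-\pi,-a\pi]$, let $x_0$ be the first such zero, so that $x_0>-\pi$ and $e>0$ on $(-\pi,x_0)$.

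The core is the iterated-integration chain on $(-\pi,x_0)$. Since $e>0$ there, $\lambda p e>0$ a.e., and from $e'''(x)=e'''(-\pi)+\int_{-\pi}^{x}\lambda\,p\,e\,dt$ together with $e'''(-\pi)\ge0$ we get $e'''>0$ on $(-\pi,x_0]$ (strictly, using the positive integrand; in the degenerate case $e'''(-\pi)=0$ this is where positivity is launched). Integrating upward and using the remaining Cauchy values $e''(-\pi)=0$, $e'(-\pi)\ge0$, $e(-\pi)=0$, one finds successively $e''(x)=\int_{-\pi}^x e'''\,dt>0$, then $e'(x)=e'(-\pi)+\int_{-\pi}^x e''\,dt>0$, and finally $e(x)=\int_{-\pi}^x e'\,dt>0$, for every $x\in(-\pi,x_0]$. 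In particular $e(x_0)>0$, contradicting $e(x_0)=0$. Hence $e$ has no zero on $(-\pi,-a\pi]$ and, being positive near $-\pi$, satisfies $e>0$ on all of $(-\pi,-a\pi]$, as claimed.

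Both alternatives in \eqref{ci} are handled by the very same chain, the only case distinction being the bookkeeping of the two integration constants $e'''(-\pi)$ and $e'(-\pi)$ and the verification that $e>0$ right after $-\pi$. I do not expect a genuine obstacle here: the argument is a soft monotonicity bootstrap whose only real inputs are $\lambda>0$ and $\alpha>0$ (without positivity of the weight the sign of $e''''$ could not be controlled). The one point deserving care is regularity — ensuring the monotonicity steps are legitimate when the equation holds only a.e. — which is why I would phrase the whole chain through the integral identities above rather than through classical fourth derivatives.
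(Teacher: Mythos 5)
Your proof is correct and follows essentially the same route as the paper's: both argue by contradiction at the first zero $x_0$ of $e$ in $(-\pi,-a\pi]$ (positivity near $-\pi$ coming from the data \eqref{ci}), and then bootstrap strict positivity of $e'''$, $e''$, $e'$, $e$ on $(-\pi,x_0]$ by integrating $e''''=\lambda p\,e>0$ from $-\pi$, using the regularity of Proposition \ref{regolarita}. The only differences are cosmetic: the paper writes the iterated integral formula for $e''$ in one step and treats the two cases of \eqref{ci} sequentially, whereas you integrate one order at a time, handle both cases uniformly, and make explicit the auxiliary facts ($\lambda>0$, choice of the first zero) that the paper leaves implicit.
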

 \begin{proof}
 	We expand on the idea of \cite[Lemma 2.2]{nehari}. Since, by Proposition \ref{regolarita}, $e\in H^4(I) \cap C^3(\overline I_-)$ and solves \eqref{beam0} almost everywhere, for $x\in \overline I_-$ we may write
\begin{equation*}
e'''(x)-e'''(-\pi)=\lambda \int_{-\pi}^{x}p(t)e(t)\,dt,
\end{equation*}
so that %inversione di variabili negli integrali doppi
\begin{equation}\label{eq1}
e''(x)=e'''(-\pi)(x+\pi)+\lambda \int_{-\pi}^{x}(x-t)p(t)e(t)\,dt.
\end{equation}
Assume that the first case in \eqref{ci} occurs. By contradiction, suppose that there exists $x_0\in(-\pi,-a\pi]$ such that $e(x_0)=0$; since $e'''(-\pi)>0$, we infer $e(x)\rightarrow 0^+$ as $x\rightarrow -\pi^+$ and by the continuity of $e$ we then get $e(x)>0$ for $x\in (-\pi,x_0)$. By \eqref{eq1}, this implies $e''(x)>0$ for $x\in (-\pi,x_0]$ and also $e'(x)>0$ for $x\in (-\pi,x_0]$, being $e'(-\pi)=0$. Therefore we obtain
 	$$
 	0=e(-\pi)<e(x)<e(x_0)\qquad \forall x\in (-\pi,x_0),
 	$$
 	getting the contradiction $e(x_0)>0$ and, in turn, the thesis. The same contradiction occurs if the second case in \eqref{ci} occurs.
 \end{proof}
   {\bf Proof of Proposition  \ref{general facts} completed.} The fact that the eigenvalues of \eqref{beam0} form a divergent sequence $\lambda_j(p)$ ($j\in \N_+$) and that the corresponding eigenfunctions $e_j$ form a complete system in $V$ follows from the standard spectral theory of self-adjoint operators. 
It remains to show the simplicity of the eigenvalues. By contradiction, let us suppose that $e_1$ and $e_2$ are two linearly independent eigenfunctions corresponding to the same eigenvalue $\lambda$. Then, $w(x)=c_1 e_1(x)+c_2e_2(x)$ ($c_1, c_2\in\mathbb{R}$) is an eigenfunction associated with the eigenvalue $\lambda$, solving \eqref{beam0}. Now we fix $c_1, c_2\in\mathbb{R}$ so that $w'(-\pi)=0$ and $w'''(-\pi)=A>0$. From Lemma \ref{nehari}, we obtain $w(x)>0$ for all $x\in (-\pi,-a\pi]$, in particular $w(-a\pi)>0$; this is a contradiction since $w$ is a solution of \eqref{beam0} and hence $w(-a\pi)=0$.
 \begin{remark}
\textnormal{As a direct consequence of Lemma \ref{nehari}, we infer that all the eigenfunctions satisfy $e_j'(\pm \pi)\neq 0$ and $e_j'''(\pm \pi)\neq 0$. Indeed, if it were $e_j'(-\pi)= 0$ (resp., $e_j'''(-\pi)=0$), then up to a change of sign the first (resp., the second) condition in \eqref{ci} would be satisfied. This would necessarily imply $e_j'''(-\pi)=0$ (resp., $e_j'(-\pi)=0$), otherwise by Lemma \ref{nehari} it would be $e_j(-a\pi)\neq 0$, which is not the case. But the uniqueness would then yield $e_j\equiv 0$, a contradiction.} % Hence, $e_j'( -\pi)\neq 0$; if it were $e_j'''(-\pi)=0$, up to a change of sign the second condition in \eqref{ci} would occur, but then we would get $e_j(-a\pi) \neq 0$, which is not the case. Thus, it is $e_j'''(-\pi) \neq 0$, as well.}} 
 \end{remark}

 \subsection{Proof of Proposition \ref{regolarita}}
 %\label{reg}
We follow the lines of the proof of \cite[Lemma 2.2]{holubova}. Since $e\in V(I)\subset H^2(I)$, we have that $e\in C^1(\overline I)$, due to the compact embedding $H^2(I)\subset\subset C^1(\overline I)$. Testing \eqref{beamweak} with a fixed function $v_2\in C^\infty_c(I_0)$, we obtain
	 \begin{equation}
	 \int_{I_0} e_0''v_2''\,dx=\lambda\,\int_{I_0} p\,e_0v_2\,dx\qquad \forall v_2\in C^\infty_c(I_0),
	 \label{beamweakI0}
	 \end{equation}
	 where $e_0$ is defined in \eqref{u} (we omit to write the $x$-dependences). We rewrite \eqref{beamweakI0} as
	 \begin{equation}\label{equ1}
	 	\int_{I_0} T(x)v_2''(x)\,dx=0\qquad \forall v_2\in C^\infty_c(I_0),
	 \end{equation}
	 with $T(x)=e_0''(x)-\lambda\int_{\xi}^{x}\int_{\xi}^{t}p(\tau)e_0(\tau)\,d\tau\,dt$. 
	If we consider the equality \eqref{equ1} in distributional sense, we get 
	 	 \begin{equation*}\label{equ2}
	  \langle T'', v_2 \rangle=0\qquad \forall v_2\in  C^\infty_c(I_0),
	 \end{equation*}
where $T''$ is the second derivative of $T(x)$ in sense of the distributions. Hence, 
\begin{equation}\label{distrib}
	T(x)=c_0x+c_1\qquad \text{for a.e. } x\in I_0\quad (c_0,c_1\in \mathbb{R}).
\end{equation}
Next we introduce the function $F: I_0\times \mathbb{R}\rightarrow \mathbb{R}$ through
\begin{equation*}
	F(x,s):=s-\lambda\int_{\xi}^{x}\int_{\xi}^{t}p(\tau)e_0(\tau)\,d\tau\,dt-c_0x-c_1;
\end{equation*}
we observe that, for every $x\in\overline I_0$, there exists a unique $\overline s$ such that $F(x,\overline s)=0$. Being $e_0\in C^1(\overline I_0)$ and $p\in L^\infty(I_0)$ we have that $F\in C^1(\overline I_0\times \mathbb{R})$; hence, by the Implicit Function Theorem we deduce the existence of a function $\overline s=\overline s(x)\in C^1(\overline I_0)$  such that $F(x,\overline s(x))=0$. From the definition of $T(x)$ we observe that \eqref{distrib} can be written as $F(x,e_0''(x))=0$ for a.e. $x\in I_0$, implying that $\overline s(x)=e_0''(x)$ for a.e. $x\in I_0$. By the continuity of $\overline s$, arguing as in \cite[Lemma 2.2]{holubova} we have $s(x)=e_0''(x)$ for every $x \in I_0$ and, being  $\overline s\in C^1(\overline I_0)$, we conclude that $e_0\in C^3(\overline I_0)$. From $F(x, e_0''(x))=0$ we then have that 
\begin{equation}\label{e0terzo}
e_0'''(x)=\lambda\int_{\xi}^{x}p(t)e_0(t)\,dt + c_0. %\in C^0(\overline I_0).
\end{equation}
Since $pe_0\in L^\infty(I_0)\subset L^1(I_0)$, from the Fundamental Theorem of Integral Calculus we deduce that
\begin{equation*}
	e_0''''(x)=\lambda p(x)e_0(x)\qquad \text{for a.e. }x\in I_0.
\end{equation*}
This argument yields the same conclusion on $e_-(x)$ and $e_+(x)$, if we consider respectively $v_1\in  C^\infty_c(I_-)$ and $v_3\in C^\infty_c(I_+)$ as test functions in \eqref{beamweak}; thus, \eqref{beam0} is satisfied for almost every $x\in I$. 
\par
We now conclude the proof by showing that $e \in C^2(\overline{I})$. 
From \eqref{beamweak}-\eqref{u} we get
\begin{equation*}
\int_{I_-} e_-''v''\,dx+ \int_{I_0} e_0''v''\,dx+\int_{I_+} e_+''v''\,dx=\lambda\,\int_{I} p\,ev\,dx\qquad \forall v\in  V(I).	
\end{equation*}
Hence, integrating twice by parts (recall that $e_0'''$ is absolutely continuous by \eqref{e0terzo}), we obtain
\begin{equation*}
\begin{split}
&[e_-''(-a\pi)-e_0''(-a\pi)]v'(-a\pi)+[e_0''(a\pi)-e_+''(a\pi)]v'(a\pi)\\&+\int_{I_-} \big[e_-''''-\lambda\,pe_-\big] \,v\,dx+\int_{I_0} \big[e_0''''-\lambda\,pe_0\big] \,v\,dx+\int_{I_+} \big[e_+''''-\lambda\,pe_+\big] \,v\,dx=0\qquad \forall v\in  V(I),
\end{split}
\end{equation*}
giving
$$
[e_-''(-a\pi)-e_0''(-a\pi)]v'(-a\pi)+[e_0''(a\pi)-e_+''(a\pi)]v'(a\pi)=0\qquad \forall v\in  V(I).
$$
This implies $e_-''(-a\pi)=e_0''(-a\pi)$, $e_0''(a\pi)=e_+''(a\pi)$ and, in turn, $e\in C^2(\overline I)$.

 \subsection{Proof of Proposition \ref{existence}}
 
The proof of Proposition \ref{existence} follows by combining the three lemmas that we state here below.
Preliminarily, for $h\in \N_+$ we introduce the orthogonal projection of $u \in V(I)$ onto the space generated by the first $(h-1)$ eigenfunctions $e_1,\dots, e_{h-1}$ of problem \eqref{beam0}, with respect to the $p$-weighted scalar product: 
$$
P_{h-1}(p)u:=\sum_{i=1}^{h-1}(p\, u,e_i)_{L^2}\, e_i\,.
$$ 
If $h=1$, we assume $P_0(p)u =0$. We then recall the Auchmuty's principle (see \cite{auchmuty} for the proof in a general setting):
 \begin{lemma}\label{lemmaauchmuty}
 	Let $p\in P_{\alpha, \beta}$, $h\in \N_+$ and $\lambda_h(p)$ the $h-$th eigenvalue of \eqref{beam0}, then 
 	$$
 	-\dfrac{1}{2\lambda_h(p)} =\inf_{u \in V(I)}\mathcal{A}_h(p,u)\quad \text{where}\quad \mathcal{A}_h(p,u):=\dfrac{1}{2}\|u\|^2_{V}-\|\sqrt{p}\,\big[u-P_{h-1}(p)u\big]\|_{2}\,.
 	$$
 	Furthermore, the minimum is achieved at a $h$-th eigenfunction normalized according to
 	$$\|e_h\|^2_{V}=\|\sqrt{p}\,e_h\|_{2}=\dfrac{1}{\lambda_h(p)}$$
 \end{lemma}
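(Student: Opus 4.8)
The plan is to diagonalize the functional $\mathcal{A}_h(p,\cdot)$ in the spectral basis furnished by Proposition \ref{general facts}, thereby reducing the infimum to an elementary one-parameter minimization. Let $\{\phi_j\}_{j\in\N_+}$ denote the eigenfunctions of \eqref{beam0} normalized in $L^2_p(I)$, so that $(\phi_i,\phi_j)_p=\delta_{ij}$; testing \eqref{beamweak} with $v=\phi_j$ gives $(\phi_i,\phi_j)_V=\lambda_j\delta_{ij}$, hence $\|\phi_j\|_V^2=\lambda_j$ and $\{\lambda_j^{-1/2}\phi_j\}_j$ is a complete orthonormal system in $V(I)$. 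With this normalization the projection in the statement reads $P_{h-1}(p)u=\sum_{i=1}^{h-1}(u,\phi_i)_p\,\phi_i$, i.e. it is the genuine $L^2_p$-orthogonal projection onto $\mathrm{span}\{\phi_1,\dots,\phi_{h-1}\}$.

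First I would expand an arbitrary $u\in V(I)$ as $u=\sum_{j}c_j\phi_j$ with $c_j=(u,\phi_j)_p$, the series converging both in $V(I)$ and in $L^2_p(I)$ by completeness. Then $\|u\|_V^2=\sum_j\lambda_j c_j^2$, while $u-P_{h-1}(p)u=\sum_{j\ge h}c_j\phi_j$, so that $\|\sqrt p\,[u-P_{h-1}(p)u]\|_2=\big(\sum_{j\ge h}c_j^2\big)^{1/2}$. Substituting into the definition of $\mathcal{A}_h$ yields the coordinate expression
$$
\mathcal{A}_h(p,u)=\frac12\sum_{j\ge1}\lambda_j c_j^2-\Big(\sum_{j\ge h}c_j^2\Big)^{1/2}.
$$

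Then I would observe that the minimization is immediate. The modes $j<h$ contribute only the nonnegative terms $\tfrac12\lambda_j c_j^2$ and are absent from the subtracted square root, so any minimizer must satisfy $c_j=0$ for $j<h$. Setting $s:=\big(\sum_{j\ge h}c_j^2\big)^{1/2}$ and using $\lambda_j\ge\lambda_h$ for $j\ge h$ gives $\sum_{j\ge h}\lambda_j c_j^2\ge\lambda_h s^2$, whence
$$
\mathcal{A}_h(p,u)\ge\frac{\lambda_h}{2}s^2-s=\frac{\lambda_h}{2}\Big(s-\frac1{\lambda_h}\Big)^2-\frac1{2\lambda_h}\ge-\frac1{2\lambda_h}.
$$
Equality forces all the energy onto the $h$-th mode, $c_j=0$ for $j\ne h$ and $s=|c_h|=1/\lambda_h$; thus the infimum is attained exactly at $u=\pm\lambda_h^{-1}\phi_h$, an $h$-th eigenfunction satisfying $\|\sqrt p\,u\|_2=\|u\|_V^2=1/\lambda_h$, as claimed.

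I do not expect a genuine analytic obstacle here: the statement is the specialization to \eqref{beam0} of Auchmuty's duality principle \cite{auchmuty}, and once the spectral theorem is in force everything collapses to the Parseval-type identities above. The only points requiring minor care are the simultaneous convergence of the eigenfunction expansion in $V(I)$ and in $L^2_p(I)$ (guaranteed by Proposition \ref{general facts}) and the verification that $P_{h-1}(p)$ is a true orthogonal projection, which rests on the $L^2_p$-orthonormality of the $\phi_j$. The ``completing the square'' step is precisely what makes the presence of the square root (rather than its square) in the definition of $\mathcal{A}_h$ essential, and it is what simultaneously pins down the value $-1/(2\lambda_h)$ and the normalization of the minimizer.
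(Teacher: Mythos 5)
Your proof is correct, but note that the paper does not actually prove this lemma: it states it and cites Auchmuty \cite{auchmuty} ``for the proof in a general setting''. Your argument is therefore a genuinely self-contained alternative, and a sound one: expanding $u$ in the $L^2_p$-normalized eigenbasis, using $(\phi_i,\phi_j)_V=\lambda_i\delta_{ij}$ (which follows from testing \eqref{beamweak} with eigenfunctions), discarding the modes $j<h$, and completing the square in $s=\big(\sum_{j\geq h}c_j^2\big)^{1/2}$ reduces everything to elementary Parseval identities. The ingredients you invoke are exactly what Proposition \ref{general facts} supplies: completeness of the eigenfunctions in \emph{both} $L^2_p(I)$ and $V(I)$, so that the expansions of $\|u\|_V^2$, $\|u\|_{L^2_p}^2$ and of the projected remainder hold simultaneously with the same coefficients $c_j=(u,\phi_j)_p$, and simplicity of the eigenvalues, i.e. $\lambda_j>\lambda_h$ for $j>h$, which is what forces the minimizer onto the $h$-th mode alone in your equality analysis. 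Two points you use tacitly but which do hold: $\lambda_h>0$ (needed for the completion of the square; it follows since $\|u''\|_2=0$ makes $u$ affine, and the four zero conditions then give $u\equiv 0$, so $\|\cdot\|_V$ is a norm on $V(I)$ and $\lambda_1>0$), and the fact that the paper's formula $P_{h-1}(p)u=\sum_{i<h}(p\,u,e_i)_{L^2}\,e_i$ is an orthogonal projection only because the $e_i$ are taken $L^2_p$-normalized, which you rightly made explicit. As for the comparison: Auchmuty's route buys generality, since his dual variational principle applies to abstract self-adjoint eigenvalue problems without setting up basis expansions; your route buys transparency, since it shows directly why the \emph{unsquared} norm in $\mathcal{A}_h$ is essential, producing both the value $-1/(2\lambda_h)$ and the normalization $\|e_h\|_V^2=\|\sqrt{p}\,e_h\|_2=1/\lambda_h$ of the minimizer, details that the bare citation leaves opaque.
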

% \begin{proof}
% \rosso [PROPONGO DI TOGLIERLA] The proof follows arguing as in \cite[Lemma 3.3]{cuccu22} by simply replacing $H^2\cap H^1_0$ with $V$. In alternative, in \cite{auchmuty} one can find the original proof in a general setting. (CONTROLLARE, ma credo valga)\nero
% \end{proof}
The second lemma states a compactness property for the considered set of densities $P_{\alpha, \beta}$.
 \begin{lemma}\label{compactness}
 The set $P_{\alpha,\beta}$ is compact for the weak* topology of $L^\infty(I)$.
 \end{lemma}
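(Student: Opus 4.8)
The plan is to combine the Banach--Alaoglu theorem with the sequential characterization of weak* convergence and then to check that each of the three constraints defining $P_{\alpha,\beta}$ is preserved in the limit.

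First I would observe that every $p\in P_{\alpha,\beta}$ satisfies $\|p\|_\infty\leq\beta$, so $P_{\alpha,\beta}$ is contained in the closed ball of radius $\beta$ of $L^\infty(I)=(L^1(I))'$. By the Banach--Alaoglu theorem this ball is weak* compact, and since $L^1(I)$ is separable the weak* topology is metrizable on bounded subsets of $L^\infty(I)$; hence it suffices to prove that $P_{\alpha,\beta}$ is weak* sequentially closed. To this end, let $\{p_n\}\subset P_{\alpha,\beta}$ and $p\in L^\infty(I)$ be such that $p_n\weakly p$ in the weak* sense, i.e. $\int_I p_n\varphi\,dx\to\int_I p\varphi\,dx$ for every $\varphi\in L^1(I)$; I must then show that $p\in P_{\alpha,\beta}$.

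The mass constraint follows at once by testing with $\varphi\equiv 1\in L^1(I)$, which yields $\int_I p\,dx=\lim_n\int_I p_n\,dx=2\pi$. For the pointwise bounds I would test against the cone of nonnegative functions: for every $\varphi\in L^1(I)$ with $\varphi\geq 0$ one has $\int_I(p_n-\alpha)\varphi\,dx\geq 0$ and $\int_I(\beta-p_n)\varphi\,dx\geq 0$, and passing to the limit gives $\int_I(p-\alpha)\varphi\,dx\geq 0$ and $\int_I(\beta-p)\varphi\,dx\geq 0$ for all such $\varphi$; by the arbitrariness of $\varphi\geq 0$ this forces $\alpha\leq p\leq\beta$ a.e. in $I$. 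Finally, for the symmetry constraint I would use the reflection $x\mapsto -x$: given $\varphi\in L^1(I)$, the evenness of each $p_n$ gives $\int_I p_n(x)\varphi(x)\,dx=\int_I p_n(x)\varphi(-x)\,dx$, and letting $n\to\infty$ (note that $\varphi(-\,\cdot\,)\in L^1(I)$) yields $\int_I p(x)\varphi(x)\,dx=\int_I p(x)\varphi(-x)\,dx=\int_I p(-x)\varphi(x)\,dx$; since $\varphi$ is arbitrary, $p(x)=p(-x)$ a.e. in $I$. Thus $p\in P_{\alpha,\beta}$, which proves the claim.

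I do not expect any genuine obstacle here, as the argument is a standard lower/upper semicontinuity check under weak* convergence; the only points requiring a little care are the preservation of the pointwise bounds, which is precisely why one tests against the whole convex cone $\{\varphi\in L^1(I):\varphi\geq 0\}$ rather than against a single function, and the verification of the symmetry, which relies on the invariance of $L^1(I)$ under the reflection $x\mapsto -x$.
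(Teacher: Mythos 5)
Your proof is correct, but it follows a different route from the paper's. The paper never tests the constraints directly against the weak* limit: it first shows that $P_{\alpha,\beta}$ is \emph{strongly} closed in $L^2(I)$ (extracting an a.e.\ convergent subsequence, under which the bounds and the mass constraint pass to the limit), then invokes convexity of $P_{\alpha,\beta}$ together with the fact that strongly closed convex sets are weakly closed (Mazur), and finally identifies the weak* $L^\infty$ limit with the weak $L^2$ limit of a further subsequence to conclude that the limit stays in $P_{\alpha,\beta}$. You instead verify each defining constraint directly under weak* convergence by duality: the constant test function for the mass, the cone of nonnegative $\varphi\in L^1(I)$ for the bounds $\alpha\le p\le\beta$, and the reflection invariance of $L^1(I)$ for the evenness. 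Your argument is more elementary and self-contained (no Mazur, no detour through $L^2$, no double subsequence extraction), and it has two further merits: it handles the symmetry constraint explicitly, which the paper leaves implicit in its a.e.-convergence step, and it spells out the metrizability of the weak* topology on bounded sets (separability of $L^1$), which is what upgrades the paper's sequential argument to genuine compactness and which the paper uses tacitly. The paper's pattern, on the other hand, is somewhat more robust as a template: any constraint set that is convex and stable under a.e.\ convergence is handled at once, without having to design the right family of test functions for each constraint.
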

 \begin{proof}
First, we prove that $P_{\alpha,\beta}$ is a strongly closed set in $L^2(I)$. To this end, let $\{ p_m \}_m \subset P_{\alpha,\beta}$ be a converging sequence in $L^2(I)$, and denote by $q$ its $L^2$-limit; then $p_m\rightarrow q$ also in $L^1(I)$ for $m\rightarrow +\infty$ and, up to a subsequence (still denoted by $p_m$), we have that $p_m \rightarrow q$ almost everywhere in $I$. Hence, $\alpha\leq q(x) \leq\beta$ for almost every $x \in I$;  moreover, $
 	\int_I p_m\, v\,dx\rightarrow \int_I q \,v\,dx$ for every $v\in L^2(I)
 	$, implying, for the choice $v\equiv 1$, that $2\pi=|I|=\int_I q\,dx$. Thus, $q\in P_{\alpha,\beta}$ and $P_{\alpha,\beta}$ is strongly closed in $L^2(I)$.  \par  
 	%We now show that from any sequence in $P_{\alpha, \beta}$ it is possible to extract a subsequence which converges, in the weak* topology of $L^\infty(I)(I)(I)$, to an element of  $P_{\alpha,\beta}$.
Let now $\{ p_m \}_m \subset P_{\alpha,\beta}$; since $\|p_m\|_\infty\leq \beta$ by the definition of $P_{\alpha, \beta}$, there exist $\overline{p} \in L^\infty(I)$ and a subsequence $\{p_{m_k}\}_k$ for which
 $$
 p_{m_k} \overset{\ast}{\rightharpoonup}  \overline{p} \quad \mbox{in} \ L^\infty(I)\quad \mbox{as} \ k \rightarrow \infty\,.
 $$
 Moreover, we have $\|p_{m_k}\|_2^2\leq \beta^2|I|$ so that, passing to a further subsequence, we infer that 
 $p_{m_{k_{j}}}\rightharpoonup \overline q$ in $L^2(I)$ as $j\rightarrow \infty$, for a suitable $\overline q \in L^2(I)$.
 Therefore, for every $v\in L^2(I)\subset L^1(I)$ it holds
 	$$
 	\int_I p_{m_{k_{j}}}\, v\,dx\rightarrow\int_I\overline q\, v\,dx \quad \mbox{as} \ j \rightarrow \infty % \quad \text{with }\overline q\in P_{\alpha,\beta}\,
 	$$
 	and, since $p_{m_k} \overset{\ast}{\rightharpoonup}  \overline{p}$ in $L^\infty(I)$ yields $\int_I p_{m_{k_{j}}}\, v\,dx\rightarrow\int_I\overline p\, v\,dx$ $\forall v\in L^1(I)$, we conclude that $\overline p=\overline q$ almost everywhere in $I$. Finally, it is easy to check that $P_{\alpha,\beta}$ is a convex set; since strongly closed convex spaces are weakly closed, we readily infer that $\overline q\in P_{\alpha,\beta}$ and hence $\overline p\in P_{\alpha,\beta}$. The proof is complete.
 \end{proof}
Finally, we prove the continuity of the eigenvalues with respect to the weight $p$.
 \begin{lemma}\label{continuity}
 	Let $h\in \N_+$ and let $\lambda_h(p)$ be the $h$-th eigenvalue of \eqref{beam0}. The map $\lambda_h: P_{\alpha, \beta} \to \mathbb{R}$ defined by $p\mapsto\lambda_h(p)$ is continuous with respect to the weak* convergence.
 \end{lemma}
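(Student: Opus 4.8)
The plan is to derive continuity from the Auchmuty variational principle stated in Lemma~\ref{lemmaauchmuty}. Writing $m_h(p):=-\tfrac{1}{2\lambda_h(p)}=\inf_{u\in V(I)}\mathcal{A}_h(p,u)$ and recalling that $t\mapsto-\tfrac{1}{2t}$ is a continuous increasing bijection of $(0,+\infty)$ onto $(-\infty,0)$, it suffices to show that $m_h(p_m)\to m_h(p)$ whenever $p_m\overset{\ast}{\rightharpoonup}p$ in $L^\infty(I)$ with $p_m,p\in P_{\alpha,\beta}$. I would prove this by induction on $h$, simultaneously establishing the auxiliary fact that the suitably normalized $h$-th eigenfunctions converge, up to sign, strongly in $V(I)$: this stronger statement is exactly what feeds the projection term $P_{h-1}(p)$ at the next step. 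Two preliminary uniform bounds are used throughout. First, from the min-max characterization \eqref{caract1} together with $\alpha\le p\le\beta$ one gets $\tfrac{1}{\beta}\lambda_h(1)\le\lambda_h(p)\le\tfrac{1}{\alpha}\lambda_h(1)$, so the eigenvalues stay bounded and bounded away from $0$ uniformly in $p\in P_{\alpha,\beta}$; hence the normalized eigenfunctions are bounded in $V(I)$. Second, the compact embedding $V(I)\subset\subset C^1(\overline I)$ turns weak $V$-convergence into uniform convergence, which is the regularity needed to pass weak$^\ast$ limits.

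The computational workhorse is the following elementary fact: if $w_m\to w$ uniformly on $\overline I$ and $p_m\overset{\ast}{\rightharpoonup}p$, then $\int_I p_m w_m^2\,dx\to\int_I p\,w^2\,dx$; indeed, splitting $\int_I p_m w_m^2=\int_I p_m w^2+\int_I p_m(w_m^2-w^2)$, the first term converges by weak$^\ast$ convergence tested against $w^2\in L^1(I)$, while the second is controlled by $\beta\|w_m^2-w^2\|_1\to0$. For the base case $h=1$ one has $P_0\equiv0$, so $\mathcal{A}_1(p,u)=\tfrac12\|u\|_V^2-\|\sqrt{p}\,u\|_2$ depends on $p$ only through $\|\sqrt{p}\,u\|_2$, and the workhorse (combined with weak lower semicontinuity of $\|\cdot\|_V$) already gives the two one-sided estimates. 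In the inductive step I would bound $m_h(p_m)$ from above by testing $\mathcal{A}_h(p_m,\cdot)$ with the minimizer $e_h(p)$ for $p$: since $P_{h-1}(p)e_h(p)=0$ and, by the inductive hypothesis on eigenfunction convergence, $P_{h-1}(p_m)e_h(p)\to0$ uniformly, the workhorse yields $\mathcal{A}_h(p_m,e_h(p))\to\mathcal{A}_h(p,e_h(p))=m_h(p)$, whence $\limsup_m m_h(p_m)\le m_h(p)$. For the reverse inequality I would take minimizers $u_m=e_h(p_m)$, which are bounded in $V(I)$, extract $u_m\rightharpoonup\bar u$ weakly in $V(I)$ and uniformly on $\overline I$, and use the inductive hypothesis to obtain $P_{h-1}(p_m)u_m\to P_{h-1}(p)\bar u$ uniformly; weak lower semicontinuity of $\|\cdot\|_V$ together with the workhorse then gives $\liminf_m m_h(p_m)=\liminf_m\mathcal{A}_h(p_m,u_m)\ge\mathcal{A}_h(p,\bar u)\ge m_h(p)$.

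Combining the two estimates yields $m_h(p_m)\to m_h(p)$, hence $\lambda_h(p_m)\to\lambda_h(p)$; moreover the chain of inequalities forces $\mathcal{A}_h(p,\bar u)=m_h(p)$, so $\bar u$ is a minimizer and therefore equals $\pm e_h(p)$ by the simplicity of the eigenvalues (Proposition~\ref{general facts}). Finally, from the normalization $\|u_m\|_V^2=1/\lambda_h(p_m)\to1/\lambda_h(p)=\|\bar u\|_V^2$ and $u_m\rightharpoonup\bar u$ in $V(I)$ one gets norm convergence, hence strong convergence $u_m\to\bar u$ in $V(I)$; a routine subsequence-of-subsequence argument, using that the limit is uniquely determined up to sign, promotes this to convergence of the full sequence (up to a fixed choice of signs), closing the induction.

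The main obstacle is precisely the dependence of the functional $\mathcal{A}_h(p,\cdot)$ on $p$ through the projection $P_{h-1}(p)$: one cannot regard $\mathcal{A}_h(\cdot,u)$ as continuous in $p$ for fixed $u$ without knowing that the lower eigenfunctions themselves move continuously with $p$. This is what dictates the joint induction and makes the compact embedding $V(I)\subset\subset C^1(\overline I)$ and the simplicity of the spectrum indispensable: the former converts the available weak bounds into the uniform convergence required to pass the weak$^\ast$ limits in the projection coefficients $(p_m u,e_i(p_m))_{L^2}$, while the latter pins down the limiting eigenfunction and thus prevents the induction from losing track of which eigenfunction is being approximated.
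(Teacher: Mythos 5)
Your argument is correct, but it follows a genuinely different route from the paper's. The paper works with a single (diagonal) extraction: it takes limits $\lambda_h(p_m)\to\overline{\lambda}_h$ and $e_h(p_m)\rightharpoonup\overline{e}_h$ in $V(I)$ for all indices at once, passes to the limit in the weak form of \eqref{beam0} to see that each $\overline{\lambda}_h$ is an eigenvalue for the weight $\overline{p}$ with eigenfunction $\overline{e}_h$, uses the limit orthonormality relations to see that the $\overline{\lambda}_h$ are distinct (hence strictly increasing), proves $\overline{\lambda}_h\to+\infty$, and only then invokes Auchmuty's principle (Lemma \ref{lemmaauchmuty}), once, inside a contradiction argument showing that the limit problem has no eigenvalues besides the $\overline{\lambda}_h$'s; this exhaustion step is what identifies $\overline{\lambda}_h=\lambda_h(\overline{p})$. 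You instead make Auchmuty's principle the engine of the entire proof: an induction on $h$, with two-sided bounds on the infima $m_h(p_m)$, whose inductive carrier is strong $V(I)$-convergence (up to sign) of the lower eigenfunctions---exactly what is needed to control $P_{h-1}(p_m)$---and identification of the limit via uniqueness of the Auchmuty minimizer rather than via completeness of the limit spectrum. Your scheme buys independence from the exhaustion step (you never need $\overline{\lambda}_h\to+\infty$ nor the ``no missing eigenvalue'' argument), at the price of a stronger inductive invariant and of one fact not contained in the statement of Lemma \ref{lemmaauchmuty}: that the minimizer of $\mathcal{A}_h(p,\cdot)$ is unique up to sign. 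This is true and easy, but you should spell it out, since your identification $\bar u=\pm e_h(p)$---and hence the closing of the induction---hinges on it: expanding $u=\sum_i c_ie_i$ in the $L^2_p$-orthonormal eigenbasis (complete by Proposition \ref{general facts}) gives $\mathcal{A}_h(p,u)=\tfrac12\sum_i\lambda_ic_i^2-\bigl(\sum_{i\ge h}c_i^2\bigr)^{1/2}$, and equality in $\mathcal{A}_h(p,u)\ge-\tfrac1{2\lambda_h(p)}$ forces $c_i=0$ for all $i\ne h$ precisely because the spectrum is simple (strictly increasing). The remaining ingredients---the min-max bounds $\lambda_h(1)/\beta\le\lambda_h(p)\le\lambda_h(1)/\alpha$ from \eqref{caract1}, the compact embedding of $V(I)$, and the passage of weak* limits in the projection coefficients---are common to both proofs; indeed the paper's final contradiction argument uses exactly your ``workhorse'' computation to obtain $P_{h-1}(p_m)\overline{e}\to0$.
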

 \begin{proof}
 	Let $\{ p_m \}_m \subset P_{\alpha,\beta}$ be a sequence which converges, in the weak* topology of $L^\infty(I)$, to a suitable $\overline{p} \in L^\infty(I)$, i.e.,
 	$$
 	p_m \overset{\ast}{\rightharpoonup}  \overline{p} \quad \mbox{for} \ m \rightarrow \infty;
 	$$
by Lemma \ref{compactness}, we know that $\overline{p}\in P_{\alpha,\beta}$.\par 
We associate with $p_m$ the $h$-th eigenvalue $\lambda_h(p_m)$ of \eqref{beam0} and the corresponding eigenfunction $e_h(p_m)$, normalized in such a way that $\Vert e_h(p_m) \Vert_{L^2_{p_m}(I)}=1$. By \eqref{beam0}, this implies that $\lambda_h(p_m)=\|e_h(p_m)\|^2_{V(I)}$; % with respect to the weighted scalar product, i.e. $\int_\Omega p_m\,e_h(p_m)\,e_r(p_m)\,dx\,dy=\delta_{hr}$, where $\delta_{hr}$ is the Kronecker delta for all $h,r \in \mathbb{N}_+$ and.\par 
moreover, by \eqref{eq:famiglia} and \eqref{caract1}, we have
\begin{equation}\label{eb}
 	\lambda_h(p)\leq \dfrac{\lambda_h(1)}{\alpha}\qquad \forall p\in P_{\alpha,\beta},
\end{equation}
where $\lambda_h(1)$ is the $h$-th eigenvalue of \eqref{beam0} with $p\equiv 1$.
From \eqref{eb} it then follows that 
 $\lambda_h(p_m)=\|e_h(p_m)\|^2_{V(I)} \leq \lambda_h(1)/\alpha$, so that $\{e_h(p_m)\}_m$ is bounded in $V(I)$.  Therefore, we can extract a subsequence, which we still label by $m$, such that, at the same time, 
 	\begin{equation*}\label{wconvergence}
 	\begin{array}{lll}
 	\lambda_h(p_m)\rightarrow  \overline \lambda_h & \mbox{in} \;  \mathbb{R} & \quad \mbox{as} \ m \rightarrow \infty \\
 	e_h(p_m) \rightharpoonup  \overline{e}_h & \mbox{in} \; V(I)  &\quad \mbox{as} \ m \rightarrow \infty.
 	\end{array}
 	\end{equation*}
 	Moreover, due to the compact embedding $V(I) \hookrightarrow L^2(I)$, we have that $
 	e_h(p_m)$ strongly converges to $\overline{e}_h$ in $L^2(I)$ as $m \rightarrow \infty$.
Taking into account that, for every $v \in V(I)$, it holds
 	\begin{equation*}
 	\bigg|\int_I (p_m\,e_h(p_m)-\overline p\,\overline e_h)\,v\,dx\bigg|\leq \|p_m v\|_2\|e_h(p_m)-\overline e_h\|_2+\bigg|\int_I p_m\,\overline e_hv\,dx-\int_I \overline p\,\overline e_hv\,dx\bigg|\rightarrow 0
 	\end{equation*}
 (since $\overline e_h v\in V(I)\subset L^1$),
 this implies that
 	\begin{equation}\label{pconv}
 	\int_{I} p_m\,e_h(p_m)\,v\,dx\rightarrow\int_I \overline p\,\overline e_h\,v\,dx \quad \mbox{as} \ m \rightarrow \infty, \quad \text{for every } v \in V(I).
 	\end{equation}
Consequently, for every $v \in V(I)$ we obtain
 	\begin{align}\label{allimite}
 0=	\big(e_h(p_m),v\big)_{V}-\lambda_h(p_m)\big( p_m\,e_h(p_m),v\big)_{L^2}\rightarrow \big(\overline e_h,v\big)_{V}-\overline \lambda_h\big( \overline p\,\overline e_h,v\big)_{L^2} \qquad \mbox{as} \ m \rightarrow \infty\,,
 	\end{align}
 	inferring that $\overline\lambda_h$ is an eigenvalue of \eqref{beam0} for $p=\overline p$ and $\overline e_h$ is a corresponding eigenfunction. \par
So far, we know that $\{\overline \lambda_h\}_h$ is a subset of the set of the eigenvalues of \eqref{beam0} with $p=\overline p$. Moreover, arguing as for \eqref{pconv} we obtain
\begin{equation}\label{pconv2}
\delta_{hr} = \int_I p_m e_h(p_m) e_r(p_m) \, dx \to \int_I \overline p \overline e_h \overline e_r \, dx \quad \text{for every } h, r \in \N_+,
\end{equation}
which implies that the $\overline \lambda_h$'s are all distinct.
Therefore, by Proposition \ref{general facts}, $\{\overline \lambda_h\}_h$ is a \emph{strictly increasing} sequence, which then tends, for $h \to +\infty$, 
to some $\overline \lambda \in \mathbb{R} \cup \{+\infty\}$, with $\overline \lambda \neq \overline \lambda_h$ for every $h$. We now show that  $\overline \lambda = +\infty$. 
By \eqref{pconv2}, it holds $\Vert \overline e_h \Vert_{L^2_{\overline p}}=1$ for every $h$ and so $\Vert \overline e_h \Vert^2_{V(I)}=\overline \lambda_h$. Assume by contradiction that $\overline \lambda \in \mathbb{R}$; then, the sequence $\{\overline e_h\}_h$ is bounded in $V(I)$, and hence it converges weakly to some $\bar{e} \in V(I)$, up to subsequences. Consequently, $\overline e_h \to \bar{e}$ strongly in $L^2_{\overline p}(I)$, so that $\Vert \bar{e} \Vert_{L^2_{\overline p}}=1$. Passing to the limit in the right-hand side of \eqref{allimite}, one then obtains that $\overline e$ is an eigenfunction of \eqref{beam0} for $p=\overline p$, with corresponding eigenvalue $\overline \lambda$. Since eigenfunctions associated with different eigenvalues are orthogonal, we have
$$
0= %\int_I \overline e_h'' \overline e'' \, dx =
\overline \lambda_h \int_I p \overline e_h \overline e \, dx \to \overline \lambda \int_I p \overline e^2 \, dx = \overline \lambda  \quad \text{as } h \to \infty,
$$
which is impossible since $\overline \lambda_h$ is an increasing sequence of positive numbers. Hence, $\overline \lambda_h \to +\infty$ for $h \to +\infty$. 
\par
%Arguing in the same way, we also obtain $\int_I p_m\,e_h(p_m)\,e_r(p_m)\,dx\rightarrow\int_I \overline p\,\overline e_h\,\overline e_r\,dx=\delta_{hr}$ for all $h,r \in\mathbb{N}_+$. This implies that
 %	$\overline \lambda_h$ is a diverging sequence for $h\rightarrow\infty$.  hence  This would imply that $\overline \lambda$ is as well an eigenvalue of \eqref{beam0} and $\overline e$ is an associated eigenfunction,  
Finally, we want to prove that $\overline \lambda_h = \lambda_h(\overline p)$ for every $h \in \N_+$; to this end, we show that problem \eqref{beam0} with $p=\overline p$ has no eigenvalues other than $\{\overline \lambda_h\}_h$. If by contradiction this were not true, there would exist another eigenfunction $\overline e$, associated with another eigenvalue $\overline \lambda$, such that $\big(\overline p\, \overline e,\overline e_h \big)_{L^2(I)}=0$ for all $h\in\mathbb{N}_+$.
 	We normalize $\overline e$ so that $\|\sqrt{\overline p}\,\overline e\|_{2}=1/\overline \lambda$; applying Lemma \ref{lemmaauchmuty} we would have 
 	\begin{equation}\label{assurdo}
 	-\dfrac{1}{2\lambda_h(p_m)}\leq \mathcal{A}_h(p_m,\overline e)=\dfrac{1}{2}\|\overline e\|^2_{V}-\|\sqrt{p_m}\,\big[\overline e-P_{h-1}(p_m)\overline e\big]\|_{2}\rightarrow \dfrac{1}{2}\|\overline e\|^2_{V}-\|\sqrt{\overline p}\,\overline e\|_{2}=-\dfrac{1}{2\overline \lambda},
 	\end{equation}
 	where the convergence comes from the fact that
 	$$
 	P_{h-1}(p_m)\overline e=\sum_{i=1}^{h-1}\big( p_m\overline e, e_i(p_m) \big)_{L^2}\,e_i(p_m)\rightarrow \sum_{i=1}^{h-1}\big(\overline p\, \overline e, \overline e_i \big)_{L^2}\,\overline e_i=0. %\quad {\rm in} \hspace{1mm} L^2.
 	$$
 	Therefore, by \eqref{assurdo}, letting $m\rightarrow \infty$, we would obtain
 	$$
 	\overline \lambda\geq \lambda_h(p_m)\rightarrow \overline \lambda_h\qquad \forall h\in\mathbb{N}_+,
 	$$
 	giving a contradiction since $\overline \lambda_h \to +\infty$ for $h\rightarrow\infty$. 
Hence, the set of the eigenvalues of \eqref{beam0} with $p=\overline p$ coincides with $\{\overline \lambda_h\}_h$, and its order is induced by the inequality  
$\lambda_1(p_m) < \lambda_2(p_m) < \lambda_3(p_m) < \ldots$ (valid for every $m$), which passes to the limit. 
We conclude that $\overline \lambda_h= \lambda_h(\overline p)$, implying the thesis. %of $p\mapsto\lambda_h(p)$ for every fixed $h\in\mathbb{N}_+$.

%  to prove that $\overline \lambda_h=\lambda_h(\overline p)$ for every $h\in \mathbb{N}_+$, 
 \end{proof} 
 {\bf Proof of Proposition \ref{existence} completed.}
 Let us consider the function $F: (0,+\infty)\times (0,+\infty)\to \mathbb{R}$ defined by $F(t,s):=\dfrac{t}{s}$, continuous on its domain.  By Lemma \ref{continuity}, the maps $p\mapsto \nu(p)$ and $p\mapsto \lambda(p)$ are continuous on $P_{\alpha,\beta}$ with respect to the weak* convergence; %for all %$j,k\in\mathbb{N}_+$; since $j>k\geq1$, then 
moreover, $\nu(p)>\lambda(p)>0$, so that also $F(\nu(p),\lambda(p))$ is continuous on $P_{\alpha,\beta}$ with respect to the same topology. Finally, the existence of a maximum (or a minimum) of $F(\nu(p),\lambda(p))=\frac{\nu(p)}{\lambda(p)}$ on $P_{\alpha,\beta}$ follows from the compactness of the set $P_{\alpha,\beta}$ with respect to the weak* topology of $L^{\infty}$, proved in Lemma \ref{compactness}.  % for all $j>k\geq1$, integers.\par 
 
 \subsection{Proof of Theorem \ref{thm-rapp}}
 We follows the lines of the proof of \cite[Theorem 3]{banks-gentry}. We say that a function $\delta p$ is an admissible variation of $p\in P_{\alpha,\beta}$ if $p+\delta p \in P_{\alpha,\beta}$. Then, by computing the first variation of the functional $\bigg(\dfrac{\nu}{\lambda}\bigg)(p)$ with respect to an admissible variation of $p\in P_{\alpha,\beta}$, see e.g. \cite[Theorems 1,2]{banks-gentry}, we readily get that
 \begin{equation*}
 %\label{var}
 \delta\bigg(\dfrac{\nu}{\lambda}\bigg)(p)=\int_I g(p,x)\, \delta p(x)\,dx,
 \end{equation*}
 where $g(p,x)$ is as defined in \eqref{gp}. 
 \medskip
 
Next, for $p\in P_{\alpha,\beta}$ and $t\in \R$, we consider the sets
\begin{equation*}
\begin{split}
S_p(t):=\{x\in \overline I: g(p,x)\geq t\}\quad \text{and} \quad T_p(t):=\{x\in \overline I: g(p,x)> t\}.
\end{split}
\end{equation*}
and the function
$$
I_p(t):=\beta |S_p(t)|+\alpha |S_p^c(t)|=(\beta-\alpha)|S_p(t)|+\alpha|I|,
$$
 where $S_p^c(t)=\overline I\setminus S_p(t)$. Clearly, $g(p,\cdot)\in C^0(\overline I)$; if $t'$ is a number less than the minimum of $g(p,\cdot)$ and $t''$ is a number greater than its maximum we get, respectively, $S_p(t')=\overline I$ and $S_p(t'')=\emptyset$. Therefore,
 \begin{equation*}
 	I_p(t')=\beta |I|\geq I_p(t)\geq \alpha |I|=I_p(t'')\qquad \forall t\in [t',t''].
 \end{equation*}
 We observe that $S_p(t)$ is a decreasing set function, implying that also the function $I_p(t)$ is decreasing; thus, $I_p(t)$ may have only jump discontinuities and there exists $t_0=t_0(p)$ such that either $I_p(t_0)=|I|$ or $I_p(t_0^-)> |I|> I_p(t_0^+)$.
 In this latter case we have $|A_p(t_0):=\{x\in \overline I: g(p,x)= t_0\}|>0$, $T_p(t_0) \subset S_p(t_0)$ and
 $$
 I_p(t_0^-)=\beta |S_p(t_0)|+\alpha |S_p^c(t_0)|\quad \text{and} \quad  I_p(t_0^+)=\beta |T_p(t_0)|+\alpha |T_p^c(t_0)|\,.
 $$
For $\theta \in \overline I$, we then define
 $$
S_\theta:=T_p(t_0)\cup\big([-\pi,\theta]\cap A_{p}(t_0) \big) \quad \text{and} \quad I(\theta):=\beta |S_\theta|+\alpha |S^c_\theta|\,.
 $$
The map $\overline I\ni \theta \mapsto I(\theta) $ is continuous and $I(-\pi)=I_p(t_0^+)$ and $I(\pi)=I_p(t_0^-)$. Hence, there exists $\theta_0=\theta_0(p)>-\pi $ such that $I(\theta_0)=|I|$, i.e. there exists a set $S_{ \theta_0}$ such that
 $$
 \beta |S_{ \theta_0}|+\alpha |S^c_{ \theta_0}|=|I|\qquad \text{with }T_p(t_0)\subseteq S_{\theta_0}\subset S_p(t_0).
 $$
 Let $\widehat{p}$ be a maximizer of \eqref{opt}. We set $\widehat t:=t_0(\widehat p)$, $\widehat \theta:=\theta_0(\widehat p)$,  $\widehat I:=S_{\widehat \theta}$ and we introduce the weight:
 \begin{equation*}
 p_{\widehat{\theta}}(x):=\beta \chi_{ \widehat I}(x)+\alpha\chi_{I \setminus \widehat I}(x)\,.
 \end{equation*}
 Clearly, $p_{\widehat{\theta}} \in P_{\alpha,\beta}$ and we may consider the variation of $\bigg(\dfrac{\nu}{\lambda}\bigg)( \widehat{p})$ with respect to the admissible variation $\delta \widehat{p}=p_{\widehat{\theta}}-\widehat{p}$, namely:
$$
\delta \bigg(\dfrac{\nu}{\lambda}\bigg)( \widehat{p})=\int_{\widehat I} g(\widehat{p},x)\, \delta p(x)\,dx+\int_{I\setminus\widehat I} g(\widehat{p},x)\, \delta p(x)\,dx.
$$
 By $\widehat p\in P_{\alpha,\beta}$ and the definition of $\widehat I$ we have
$$
 \delta \bigg(\dfrac{\nu}{\lambda}\bigg)( \widehat{p})\geq t_0\bigg[\int_{\widehat I} [\beta-\widehat{p}(x)]\,dx+\int_{I\setminus\widehat I} [\alpha-\widehat{p}(x)]\,dx\bigg]=t_0\bigg[\int_{I} p_{\widehat{\theta}}(x)\,dx-\int_{ I} \widehat{p}(x)\,dx\bigg] =0.
$$
 The inequality above becomes strict if $\beta >\widehat{p}$ or $\alpha <\widehat{p} $ on a set of positive measure $J\subset \overline I$ such that $ J\setminus \widehat  A_{\widehat t} \neq \emptyset$ where $A_{\widehat t}:=A_{\widehat p}(t_0)$.  In this case, we get $ \delta \bigg(\dfrac{\nu}{\lambda}\bigg)( \widehat{p})>0$, contradicting the fact that $\widehat{p} $ is a maximizer. 
 \par
\bigskip
\par
 \subsection{Proof of Proposition \ref{gmax}}
The first part of the statement follows from the fact that both $g(p, x)$ and 
$$
g'(p, x)=2 \frac{\nu(p)}{\lambda(p)}\big(e_\lambda(x)e'_\lambda(x)-e_\nu(x)e'_\nu(x)\big)
$$
vanish for $x=\pm \pi$ and for $x=\pm a\pi$, since all the eigenfunctions vanish therein.
\par
We now prove the second part of the statement, for which we assume that the two eigenfunctions $e_\lambda$ and $e_\nu$ have different parity. 
Let $e_\lambda$ be even; then, we know that $e'_\lambda(0)=e'''_\lambda(0)=0$. We claim that, up to a change of sign, $e_\lambda(0)>0$ and $e''_\lambda(0)<0$. To prove this, assume by contradiction that $e_\lambda(0)=0$; it follows that either $e''_\lambda(0)>0$ or $e''_\lambda (0)<0$ (otherwise $e_\lambda \equiv 0$). Arguing as in the proof of Lemma \ref{nehari}, in the first case we get $e_\lambda(a\pi)>0$, while in the second case we obtain $e_\lambda(a\pi)<0$; in any case, we reach a contradiction. Hence, we may assume $e_\lambda(0)>0$. Now, if it were $e''_\lambda(0)\geq 0$, arguing once more as in the proof of Lemma \ref{nehari} we would infer $e_\lambda(a\pi)>0$, a contradiction. Hence, we also have $e''_\lambda(0)<0$ and the claim is completely proved.
Since $e_\nu(0)=0$, we have
$$
g(p,0)=\frac{\nu(p)}{\lambda(p)}e_\lambda^2(0)>0,\quad \quad g'(p,0)=2 \frac{\nu(p)}{\lambda(p)}\big(e_\lambda(0)e'_\lambda(0)-e_\nu(0)e'_\nu(0)\big)=0,
$$	
and, thanks to the claim that we have just proved,
$$
g''(p,0)=2 \frac{\nu(p)}{\lambda(p)}\big(e_\lambda(0)e''_\lambda(0)-(e'_\nu(0))^2\big) \leq 2 \frac{\nu(p)}{\lambda(p)}\, e_\lambda(0)e''_\lambda(0)<0\,.
$$
It follows that $g$ has a local maximum point at $x=0$. \par

\par\bigskip\noindent
 \textbf{Acknowledgments.} The authors are members of the Gruppo Nazionale per l'Analisi Matematica, la Probabilit\`a e le loro Applicazioni (GNAMPA) of the Istituto Nazionale di Alta Matematica (INdAM) and are partially supported by the PRIN project 201758MTR2: ``Direct and inverse problems for partial differential equations: theoretical aspects and applications'' (Italy). The first two authors are partially supported by the INDAM-GNAMPA 2019 grant: ``Analisi spettrale per operatori ellittici con condizioni di Steklov o parzialmente incernierate''.


\begin{thebibliography}{99}
%	\bibitem{ammann} O.H. Ammann, T. von K\'{a}rm\'{a}n, G.B. Woodruff, The failure of the Tacoma Narrows Bridge, Federal Works Agency (1941)

%\bibitem{anedda1} C. Anedda, F. Cuccu, {\em Steiner symmetry in the minimization of the first eigenvalue in problems involving the p-Laplacian}, Proc. Amer. Math. Soc. 144, (2016), 3431-3440

\bibitem{anedda2} C. Anedda, F. Cuccu, G. Porru, {\em Minimization of the first eigenvalue in problems involving the bi-laplacian}, Rev. Mate. Teor. Appl. 16 (2009), 127--136

%\bibitem{benguria} M.S. Ashbaugh, R.D. Benguria, {\em A sharp bound for the ratio of the first two eigenvalues of Dirichlet Laplacians and extensions},
%Ann. of Math. 135, (1992), 601-628

\bibitem{auchmuty}G. Auchmuty, {\em Dual variational principles for eigenvalue problems}, in Nonlinear Functional 	Analysis and Its Applications, F. Browder, ed., American Mathematical Society, Providence RI (1986), 55--71

%	\bibitem{banerjee} J.R.\ Banerjee, {\em A simplified method for the free vibration and flutter analysis of bridge decks}, J. Sound Vibration 260 (2003), 829-845

%\bibitem{beesack}	P.R. Beesack, {\em Isoperimetric inequalities for the nonhomogeneous clamped rod and plate}, J. Math. Mech. 8 (1959), 471-482.

\bibitem{banks-gentry} D. O. Banks, R. D. Gentry, {\em Bounds for functions of eigenvalues of vibrating systems}, J. Math. Anal. Appl. 51 (1975), 100--128

\bibitem{babefega} U. Battisti, E. Berchio, A. Ferrero, F. Gazzola, Periodic solutions and energy transfer between modes in a nonlinear beam equation, J. Math. Pures Appl. 108 (2017), 885--917 

\bibitem{bebuga2} E. Berchio, D. Buoso, F. Gazzola,  {\em On the variation of longitudinal and torsional frequencies in a partially hinged rectangular plate}, ESAIM Control Optim. Calc. Var. 24 (2018), 63--87

%\bibitem{bebugazu}  E. Berchio, D. Buoso, F. Gazzola, D. Zucco, {\em A minimaxmax problem for improving the torsional stability of rectangular plates}, J. Optim. Theory Appl. 177, (2018), 64-92

\bibitem{befa} E. Berchio, A. Falocchi, \textit{Maximizing the ratio of eigenvalues of non-homogeneous partially hinged plates}, to appear on J. Spectr. Theory (arXiv: 1907.11097)
	
		% \bibitem{ppp} E. Berchio, A. Falocchi, \textit{A positivity preserving property result for the biharmonic operator under partially hinged boundary conditions}, preprint 2020 (arXiv: 2004.03862v1)

\bibitem{befafega} E. Berchio, A. Falocchi, A. Ferrero, D. Ganguly, {\em On the first frequency of reinforced partially hinged plates}, Commun. Contemp. Math. (2019), 1950074, 37 pp.

%\bibitem{bfg}	E.\ Berchio, A.\ Ferrero, F.\ Gazzola, {\it Structural instability of nonlinear plates modelling suspension bridges: mathematical answers to some long-standing questions}, Nonlin. Anal. Real World Appl. 28, (2016), 91-125

\bibitem{elvisefilippo} E. Berchio, F. Gazzola, {\em A qualitative explanation of the origin of torsional instability in suspension bridges}, Nonlin. Anal. TMA 121 (2015), 54--72.

\bibitem{bgz}
E.\ Berchio, F.\ Gazzola, C.\ Zanini, {\it Which residual mode captures the energy of the dominating mode in second order Hamiltonian systems?}, SIAM J. Appl. Dyn. Syst. 15 (2016), 338--355

%\bibitem{bogamo} D. Bonheure, F. Gazzola, E. Moreira dos Santos, {\em Periodic solutions and torsional instability in a nonlinear nonlocal plate equation}, to appear in SIAM J. Math. Anal. 

% \bibitem{buoso} D. Buoso, L. Provenzano, J. Stubbe,  {\em  Semiclassical bounds for spectra of biharmonic operators}, arXiv:1904.11877


\bibitem{chanillo} S. Chanillo, D. Grieser, M. Imai, K. Kurata, I. Ohnishi, {\em Symmetry breaking and other phenomena in the optimization of eigenvalues for composite membranes}, Comm. Math. Phys. 214 (2000), 315--337


\bibitem{chen} W. Chen, C-S. Chou, C-Y. Kao, \textit{Minimizing Eigenvalues for Inhomogeneous Rods and Plates}, J. Sci. Comput. 69 (2016), 983--1013


% \bibitem{CV} F.~Colasuonno, E.~Vecchi, {\em  Symmetry in the composite plate problem}, Commun. Contemp. Math., Vol. 21, No. 02, (2018), 1850019, 34 pp

\bibitem{CV2} F.~Colasuonno, E.~Vecchi, {\em Symmetry and rigidity for the hinged composite plate problem}, J. Differential Equations 266 (2019), no. 8, 4901--4924

%\bibitem{copro} B. Colbois, L. Provenzano, {\em Eigenvalues of elliptic operators with density,} Calculus of Variations and Partial Differential Equations, (2018), 57-36

%\bibitem{como} M. Como, S. Del Ferraro, A. Grimaldi, A parametric analysis of the flutter instability for long span suspension bridges.  Wind and Structures 8, (2005), 1-12

\bibitem{courant} R. Courant, D. Hilbert, Methods of Mathematical Physics, vol. 1. Interscience Publishers Inc, New York, (1953)

\bibitem{cox} S.J. Cox, J.R. McLaughlin, {\em Extremal eigenvalue problems for composite membranes}, I, II, Appl. Math. Optim. 22  (1990), 153--167, 169--187

\bibitem{cuccu22} F. Cuccu, G. Porru, {\em Maximization of the first eigenvalue in problems involving the bi-Laplacian}, Nonlinear Anal. 71 (2009), 800--809

% \bibitem{delfour} Delfour, M.C., Zolésio, J.-P., Shapes and Geometries, Volume 22 of Advances in Design and Control, 2nd edn. Society for Industrial and Applied Mathematics (SIAM), Philadelphia (2011). Metrics, analysis, differential calculus, and optimization

%\bibitem{dick} R.W. Dickey, Free vibrations and dynamic buckling of the extensible beam, J. Math. Anal. Appl. 29, (1970), 443-454

% \bibitem{fergaz} A.\ Ferrero, F.\ Gazzola, {\it A partially hinged rectangular plate as a model for suspension bridges}, Disc.\ Cont.\ Dyn.\ Syst.\ A.\ 35, (2015), 5879-5908

%\bibitem{fleck}J. Fleckinger, M.L. Lapidus, {\it Eigenvalues of elliptic boundary value problems with an indefinite weight function}. Trans. Am. Math. Soc. 295(1), (1986), 305-324

\bibitem{Gar} M. Garrione, \emph{Beams with an intermediate pier: spectral properties, asymmetry and stability}, Math. Eng. 3 (2021), 1--21

\bibitem{GarGazBK} M. Garrione, F. Gazzola, Nonlinear equations for beams and degenerate plates with piers, SpringerBriefs in Applied Sciences and Technology, PoliMI Springer Briefs, Springer, Cham (2019)

\bibitem{gazgar} M. Garrione, F. Gazzola, {\it Linear theory for beams with intermediate piers}, Commun. Contemp. Math. 
22 (2020), 1950081, 41 pp. 

\bibitem{bookgaz} F.\ Gazzola, Mathematical models for suspension bridges, MS\&A Vol.\,15, Springer (2015)

%\bibitem{book} F. Gazzola, H.C. Grunau, G. Sweers, {\em Polyharmonic boundary value problems}, LNM 1991 Springer, (2010).

\bibitem{henrot} A. Henrot, Extremum problems for eigenvalues of elliptic operators, Frontiers in mathematics, Birk\"auser Verlag, Basel, Boston, Berlin
(2006)

%\bibitem{herrmann} G. Herrmann and W. Hauger, {\em On the interrelation of divergence, flutter and auto-parametric resonance.} Vol 42 of {\em Ingenieur-Archiv}, (1973), 81-88.

\bibitem{holubova2} G. Holubov\'{a}, A. Matas, {\em Initial-boundary value problem for nonlinear string-beam system}, J. Math. Anal. Appl. 288 (2003), 784--802 

\bibitem{holubova} G. Holubov\'a, P. Ne\v{c}esal, \emph{The Fu\v{c}ik spectra for multi-point boundary-value problems}, Electronic J. Diff. Eq.
Conf. 18 (2010), 33--44 


%\bibitem{Irvine} H.M. Irvine, Cable structures, MIT Press Series in Structural Mechanics, Massachusetts, (1981)

%\bibitem{jhnm} J.A.\ Jurado, S.\ Hern\'andez, F.\ Nieto, A.\ Mosquera,  Bridge aeroelasticity: sensitivity analysis and optimum design (high performance
%	structures and materials), WIT Press - Computational Mechanics, Southampton, (2011).

%\bibitem{ksw} Kawohl, B., Star\'a, J., Wittum, G.: Analysis and numerical studies of a problem of shape design. Arch. Rat. Mech. Anal. 114, 349-363 (1991)

\bibitem{keller}J.B. Keller, {\em The minimum ratio of two eigenvalues}, SIAM J. Appl. Math. 31 (1976), no. 3,  485--491

%\bibitem{Kirchhoff} G.R. Kirchhoff, {\em {\"U}ber das gleichgewicht und die bewegung einer elastischen scheibe}, J. Reine Angew. Math. {\bf 40}, 51-88 (1850)

%\bibitem{lacarbonara} W.\ Lacarbonara, {\em Nonlinear structural mechanics}, Springer (2013)

%\bibitem{lala2004} P.D.\ Lamberti, M.\ Lanza de Cristoforis, {\it A real analyticity result for symmetric functions of the eigenvalues of a domain dependent Dirichlet problem for the Laplace operator}, J.\ Nonlinear Convex Anal., 5 (2004), no.\ 1, 19-42

\bibitem{lapr} P.D.\ Lamberti, L. Provenzano, {\it A maximum principle in spectral optimization problems for elliptic operators subject to mass density perturbations}, Eurasian Math. J. 4 (2013), no. 3, 70--83

%\bibitem{larsen} A. Larsen, {\it Aerodynamics of the Tacoma Narrows Bridge - 60 years later}, Struct. Eng. Internat. 4, (2000), 243-248

\bibitem{nehari} W. Leighton, Z. Nehari, {\em On the oscillation of solutions of selfadjoint linear differential equations of the fourth order}, Trans. Amer. Math. Soc. 89 (1958), 325--377

\bibitem{levine} H.A. Levine, {\it Instability and non existence of global solutions to nonlinear wave equations of the form $P u_{tt} = Au+F(u)$}, Trans. Amer. Math. Soc. 192 (1974), 1--21

%\bibitem{Love} A.E.H. Love, {\em A treatise on the mathematical theory of elasticity (Fourth edition)}, Cambridge Univ. Press (1927)

%\bibitem{mansfield} E.H. Mansfield, {\em The bending and stretching of plates}, 2nd edition, Cambridge Univ. Press (2005)

%\bibitem{murat} Murat, F., Tartar, L.: Calculus of variations and homogenization. Topics in the Math. Modelling of Composite Materials 31, Progr. Nonlin. Diff. Eq. Appl. 139-173 (1997)



%\bibitem{loc} J. Locker, Self-adjointness for multi-point differential operators, Pacific J. Math. 45, 561-570 (1973)

%\bibitem{navier} C.L.\ Navier, {\it Extraits des recherches sur la flexion des plans \'elastiques}, Bulletin des Sciences de la Soci\'et\'e
%Philomathique de Paris, (1823), 92-102

\bibitem{podolny} W. Podolny, {\em Cable-suspended bridges}, In: Structural Steel Designer's Handbook: AISC, AASHTO, AISI, ASTM, AREMA, and ASCE-07
Design Standards. By R.L. Brockenbrough and F.S. Merritt, $5^{th}$ Edition, McGraw-Hill, New York (2011)


%\bibitem{provenzano} L. Provenzano, {\it A note on the Neumann eigenvalues of the biharmonic operator,} Math. Methods Appl. Sci., 41(3), (2018), 1005-1012

%\bibitem{nazarov} Nazarov, S.A., Sweers,   G.H.,  Slutskij, A.S.: Homogenization of a thin plate reinforced with periodic families of rigid rods. Sbornik Mathematics 202, 1127-1168 (2011)

%\bibitem{oudet} E.\ Oudet, {\it  Numerical minimization of eigenmodes of a membrane with respect to the domain}, ESAIM COCV 10 (2004), 315-335 R.\ Scott, {\em In the wake of Tacoma. Suspension bridges and the quest for aerodynamic stability}, ASCE Press (2001)

%\bibitem{rocard} Y. Rocard, Dynamic instability: automobiles, aircraft, suspension bridges. Crosby Loockwood, London, (1957)

%\bibitem{wider} C.E. Wilder,  {\it Problems in the theory of ordinary linear differential equations with auxiliary conditions at more than two points}, Trans. Amer. Math. Soc. 19, 157-166 (1918)

%\bibitem{sha} H. Shahgholian, {\em The singular set for the composite membrane problem}, Comm. Math. Phys. 271, (2007), 93-101

%\bibitem{flutter} YouTube, Airfoil flutter, aircraft flutter, bridge flutter. Available at: \tt https://www.youtube.com/watch?v=72cQgXw7\_kI; https://www.youtube.com/watch?v=iTFZNrTYp3k; https://www.youtube.com/watch?v=1Oq8HuB7\_tI
	
	
	
\end{thebibliography}
\end{document}